\def\mhline{\noalign{\ifnum0=`}\fi\hrule height 4\arrayrulewidth \futurelet
   \@tempa\oxhline}
\def\oxhline{\ifx\@tempa\hline\vskip \doublerulesep\fi
      \ifnum0=`{\fi}}
\numberwithin{equation}{section}
\newtheorem{theorem}{Theorem}[section]
\newtheorem{proposition}[theorem]{Proposition}
\newtheorem{problem}[theorem]{Problem}
\newtheorem{corollary}[theorem]{Corollary}
\newtheorem{lemma}[theorem]{Lemma}
\theoremstyle{definition}
\newtheorem{remark}[theorem]{Remark}
\newtheorem{example}[theorem]{Example}
\newtheorem{definition}[theorem]{Definition}
\newcommand{\lan}{\langle}
\newcommand{\ran}{\rangle}
\renewcommand{\eqref}[1]{{\rm (\ref{#1})}}
\DeclareMathOperator{\TFF}{TFF}
\def\ZZ{\mathbb{Z}}
\def\RR{\mathbb{R}}
\def\NN{\mathbb{N}}
\def\rank{\operatorname{rank}}
\begin{document}

\title{A combinatorial characterization of tight fusion frames}

\author{Marcin Bownik}
\address{\noindent Department of Mathematics, University of Oregon,
Eugene, OR 97403, USA}
\email{mbownik@uoregon.edu}

\author{Kurt Luoto}
\address{Department of Mathematics, University of British Columbia, Vancouver, BC V6T 1Z2, Canada}
\email{kwluoto@math.ubc.ca}

\author{Edward Richmond}
\address{Department of Mathematics, University of British Columbia, Vancouver, BC V6T 1Z2, Canada}
\email{erichmond@math.ubc.ca}

\begin{abstract}
In this paper we give a combinatorial characterization of tight fusion frame (TFF) sequences using
Littlewood-Richardson skew tableaux. The equal rank case has been solved recently by Casazza et al.
\cite{CFMUZ}. Our characterization does not have this limitation.  We also develop some methods for
generating TFF sequences.  The basic technique is a majorization principle for TFF sequences
combined with spatial and Naimark dualities.  We use these methods and our characterization to give
necessary and sufficient conditions which are satisfied by the first three highest ranks. We also
give a combinatorial interpretation of spatial and Naimark dualities in terms of
Littlewood-Richardson coefficients. We exhibit four classes of TFF sequences which have unique
maximal elements with respect to majorization partial order. Finally, we give several examples
illustrating our techniques including an example of tight fusion frame which can not be constructed
by the existing spectral tetris techniques \cite{CCHKP, CFHWZ, CFMUZ}. We end the paper by giving a
complete list of maximal TFF sequences in dimensions $\le 9$.
\end{abstract}

\keywords{tight fusion frame, majorization, orthogonal projection, partition, Schur function,
Littlewood-Richardson coefficient, Schubert calculus, symmetric functions.}


\subjclass[2000]{Primary: 42C15, 15A57, 05E05 Secondary:14N15, 14M15 }
\date{\today}

\maketitle


\section{Introduction} \label{S1}

Fusion frames were introduced by Casazza, Kutyniok in \cite{CK04} (under the name frames of
subspaces) and \cite{CKL08}. A fusion frame for $\mathbb R^N$ is a finite collection of subspaces
$\{W_i\}_{i=1}^K$ in $\mathbb R^N$ such that there exists constants $0<\alpha \le \alpha' <\infty$
satisfying
\[
\alpha ||x||^2 \le \sum_{i=1}^K ||P_i x||^2 \le \alpha' ||x||^2
\qquad\text{for all }x\in \RR^N,
\]
where $P_i$ is the orthogonal projection onto $W_i$. Equivalently, $\{W_i\}_{i=1}^K$ is a fusion
frame if and only if
\[
\alpha \mathbf I \le \sum_{i=1}^K P_i \le \alpha' \mathbf I,
\]
where $\mathbf I$ is the identity on $\mathbb R^N$. The constants $\alpha$ and $\alpha'$ are called
fusion frame bounds. An important class of fusion frames are {\it tight fusion frames} (TFF), for
which $\alpha=\alpha'$ and hence $ \sum_{i=1}^K P_i = \alpha \mathbf I$. We note that the
definition of fusion frames given in \cite{CK04, CKL08} applies to closed subspaces in any Hilbert
space together with a collection of weights associated to each subspace $W_i$. Since the scope of
this paper is limited to non-weighted finite dimensional TFF, the definition of a fusion frame is
only presented for this case.

Fusion frames have been a very active area of research in the frame theory.  A lot of effort was
devoted into developing the basic properties and constructing fusion frames with desired
properties. In particular, the construction and existence of sparse tight fusion frames was studied
in \cite{CCHKP}. Fusion frame potentials have been studied in \cite{CF09} and \cite{MRS10}.
Applications of fusion frames include sensor networks \cite{CKL08}, coding theory \cite{Bod07,
KPCL09}, compressed sensing \cite{BKR}, and filter banks \cite{CFM}. In this paper we consider a
problem of classifying TFF sequences.

\begin{problem}\label{problem}
Given $N\in \NN$, characterize sequences $(L_1,\ldots, L_K)$ for which there exists a tight fusion
frame $\{W_i\}_{i=1}^K$ with $\dim W_i=L_i$ in $N$ dimensional space. Equivalently, given $\alpha
> 1$ such that $\alpha N \in \NN$, characterize sequences $(L_1,\ldots, L_K)$ such that $\alpha
\mathbf I$ can be decomposed as a sum of projections $P_1+\ldots+P_K$ with $\operatorname{rank}
P_i=L_i$, $i=1,\ldots, K$.
\end{problem}

Casazza, Fickus, Mixon, Wang, and Zhou \cite{CFMUZ} have recently achieved significant progress in
this direction by solving the equal rank case. That is, the authors have classified all triples
$(K,L,N)$ such that there exists a tight fusion frame consisting of $K$ subspaces $\{W_i\}_{i=1}^K$
with the same dimension $\dim W_i=L$ in $\mathbb R^N$. The answer given in \cite{CFMUZ} is highly
non-trivial in the most interesting case when $L$ does not divide $N$ and $2L<N$. The authors show
that a necessary condition for such sequences $(K,L,N)$ is that $K \ge \lceil N/L \rceil +1$,
whereas a sufficient condition is  $K \ge \lceil N/L \rceil +2$. In a gray area, where $K = \lceil
N/L \rceil + 1$, the authors have devised a reduction procedure which replaces the original
sequence by another one with the equivalent TFF property (existence or non-existence). Then, it is
shown that after a finite number of steps the original sequence $(K,L,N)$ is reduced to one for
which either the necessary condition fails or the sufficient condition holds. However, the results
of \cite{CFMUZ} do not say much about a more general problem of classifying TFF sequences with
non-equal ranks. In this paper we answer Problem \ref{problem} by giving a combinatorial
characterization of TFF sequences using Littlewood-Richardson skew tableaux.

While the concept of fusion frames is relatively new, the problem of representing an operator as a
sum of orthogonal projections has been studied for a long time in the operator theory. The first
fundamental result of this kind belongs to Fillmore \cite{F69} who characterized finite rank
operators which are finite sums of projections, see Theorem \ref{fill}. Fong and Murphy \cite{FM}
characterized operators which are positive combinations of projections. Analogous results were
recently investigated for C-$*$ algebras and von Neumann algebras, see \cite{HKNZ, KNZ}. However,
the most relevant results for us are due to Kruglyak, Rabanovich, and Samo{\u\i}lenko \cite{KRS02,
KRS03} who characterized the set of all $(\alpha,N)$ such that $\alpha \mathbf I$ is the sum of $K$
orthogonal projections. In other words, their main result \cite[Theorem 7]{KRS03} gives a minimal
length $K$ of a TFF sequence in $\RR^N$ with the frame bound $\alpha$. However, \cite{KRS03} does
not say anything about the ranks of projections which is a focus of this paper.

In the finite dimensional setting the existence of TFF sequences is intimately related to Horn's
problem \cite{Ho62} which has been solved by Klyachko \cite{Kly98}, and Knutson and Tao \cite{KT99,
KTW04}, for a survey see \cite{Fu00, KT01}. Problem \ref{problem} can be thought of as a very
special kind of Horn's problem where hermitian matrices have only two eigenvalues: $0$ and $1$, and
their sum has only one eigenvalue $\alpha$. Using Klyachko's result \cite{Kly98} we show that the
existence of TFF sequence $(L_1,\ldots,L_K)$ is equivalent to the non-vanishing of a certain
Littlewood-Richardson coefficient, see Theorem \ref{th:combchar}. In turn, the latter condition is
equivalent to the existence of a matrix satisfying  some computationally explicit properties such
as: constant row and column sums, and row and column sum dominance, see Corollary
\ref{th:combcharcor}. Our combinatorial characterization enables us to deduce several properties
that TFF sequences must satisfy. In addition, it enables us to give an explicit construction
procedure of a tight fusion frame corresponding to a given TFF sequence, see Example \ref{ex1a}.

A fundamental technique of our paper is a majorization principle involving the majorization partial
order $\preccurlyeq$ as in the Schur-Horn theorem \cite{AMRS07, KW10}, which is also known as the
dominance order in algebraic combinatorics \cite{Fulton97}. In Section \ref{S2} we show that a
sequence majorized by a TFF sequence is also a TFF sequence. We also establish the spatial and
Naimark dualities for general TFF sequences extending the equal rank results in \cite{CFMUZ}. In
Section \ref{S4} we find necessary and sufficient conditions on the first three largest ranks of
projections using Filmore's theorem \cite{F69} and a description of possible spectra of a sum of
two projections, see Lemma \ref{sum2}. The latter result might be of independent interest since its
proof uses honeycomb models developed by Knutson and Tao \cite{KT99, KT01}. In the same section we
also exhibit classes of TFF sequences which have only one maximal element. These include not only
the expected case of integer $\alpha$, but also half-integer scenario, and the corresponding
conjugate $\alpha$'s via the Naimark duality. In Section \ref{S3} we prove our main
characterization result of TFF sequences using Littlewood-Richardson skew tableaux. In addition to
illustrating it on specific examples, in Section \ref{S5} we give a complete proof of Theorem
\ref{re} using the combinatorics of the Schur functions. This leads to a partial characterization
of TFF sequences which are of the hook type, i.e., sequences ending in repeated $1$'s. In Section
\ref{S6} we show that the spatial and Naimark dualities manifest themselves as identities for the
corresponding Littlewood-Richardson coefficients. In the final Section \ref{S7} we give several
examples of existence of tight fusion frames using skew Littlewood-Richardson tableaux. In
particular, we give an explicit construction of TFF corresponding to the sequence $(4,2,2,2,1)$ in
dimension $N=6$. This example is remarkable for two reasons. It is the first TFF sequence which is
missed by brute force generation involving recursive spatial and Naimark dualities. Furthermore,
this example can not be  constructed by the existing spectral tetris construction \cite{CCHKP,
CFHWZ}, which is an algorithmic method of constructing sparse fusion frames utilized in the equal
rank characterization \cite{CFMUZ}. We end the paper by giving a complete list of maximal TFF
sequences for $\alpha \le 2$ in dimensions $N \le 9$.


\section{Basic majorization and duality results} \label{S2}

\begin{definition}
Fix a positive integer $N$. Let $L_1 \ge L_2 \ge \ldots \ge L_K>0$ be a weakly decreasing sequence
of positive integers. Such sequence is also known as a {\it partition} in number theory
\cite{Andrews76} and algebraic combinatorics \cite{Fulton97}. We say that $(L_1,L_2, \ldots, L_K)$
is a tight fusion frame (TFF) sequence if there exists orthogonal projections $P_1,\ldots, P_K$
such that
\begin{equation}\label{re0}
\alpha \mathbf I = \sum_{i=1}^K P_i, \qquad\text{and }\operatorname{rank}P_i =L_i,
\end{equation}
where $\alpha \in \RR$ and $\mathbf I$ is the identity on $\RR^N$. A trace argument shows that
$\alpha = \sum_{i=1}^K L_i/N \ge 1$. Given $\alpha \ge 1$ such that $\alpha N \in \NN$, we define
$\TFF(\alpha,N)$ to be the set of all TFF sequences in $\RR^N$ with the frame bound $\alpha$.
\end{definition}

\subsection{Majorization}
The following definition comes from the majorization theory of the Schur-Horn theorem, see
\cite{KW10}. In algebraic combinatorics the majorization partial order on partitions is known as
the {\it dominance order}, see \cite{Fulton97}.

\begin{definition}
Suppose that $\mathbf L =(L_1, L_2, \ldots ,L_K)$ and $\mathbf L'=(L'_1, L'_2 , \ldots ,L'_{K'})$
be two weakly decreasing sequences of non-negative integers. We say that $\mathbf L'$ majorizes
$\mathbf L$, and write $\mathbf L \preccurlyeq \mathbf L'$ if
\[
\sum_{i=1}^K L_i = \sum_{i=1}^{K'} L'_i \qquad\text{and}\qquad
\sum_{i=1}^k L_i \le \sum_{i=1}^k L'_i,
\]
for all $k \le \min(K,K')$.
\end{definition}

Observe that appending zeros at the tails of sequences $\mathbf L, \mathbf L'$ does not affect
majorization relation. Moreover, for sequences with only positive terms, the majorization $\mathbf
L \preccurlyeq \mathbf L'$ forces that $K \ge K'$.

The majorization principle for TFF sequences takes the following form.

\begin{theorem}\label{major}
Let $\mathbf L$ and $\mathbf L'$ be two weakly decreasing sequences of positive integers such that $\mathbf L \preccurlyeq \mathbf L'$.
Then, $\mathbf L'\in\TFF(\alpha,N)$ implies that $\mathbf L\in\TFF(\alpha,N).$
\end{theorem}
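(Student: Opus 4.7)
My plan is to reduce the theorem to a single ``elementary transfer'' step between two of the projections, and then to realize that step by a direct construction.

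\emph{Reduction to one step.} The first move is to invoke the classical fact from the theory of dominance order that whenever $\mathbf L \preccurlyeq \mathbf L'$ there exists a chain of partitions
\[
\mathbf L' = \mathbf M^{(0)} \succ \mathbf M^{(1)} \succ \cdots \succ \mathbf M^{(m)} = \mathbf L
\]
in which each covering step moves a single box of the Young diagram from some row to a strictly lower row (see, e.g., \cite{Fulton97}). Padding $\mathbf L'$ with trailing zeros if necessary, I may assume every $\mathbf M^{(s)}$ has length $K$. An induction on $m$ then reduces the theorem to the following one-step claim: if $\mathbf L' \in \TFF(\alpha,N)$ and $\mathbf L$ is obtained from $\mathbf L'$ by replacing the entries $(L'_i, L'_j)$ by $(L'_i - 1, L'_j + 1)$ for some indices $i<j$ with $L'_i > L'_j$, then $\mathbf L \in \TFF(\alpha,N)$.

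\emph{The one-step construction.} Let $P_1,\ldots,P_K$ be orthogonal projections with $\sum_k P_k = \alpha\mathbf I$ and $\rank P_k = L'_k$, and write $W_k := P_k(\RR^N)$. I will modify only $P_i$ and $P_j$. The general bound $\dim(W_i + W_j^\perp) \le N$ yields
\[
\dim(W_i \cap W_j^\perp) \ge \dim W_i + \dim W_j^\perp - N = L'_i - L'_j \ge 1,
\]
so I may choose a unit vector $v \in W_i \cap W_j^\perp$. Let $Q$ be the rank-one orthogonal projection onto $\RR v$, and set
\[
P'_i := P_i - Q, \qquad P'_j := P_j + Q.
\]
Because $v \in W_i$, one has $P_i Q = Q P_i = Q$, which makes $P'_i$ the orthogonal projection of rank $L'_i - 1$ onto $W_i \cap (\RR v)^\perp$. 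Because $v \perp W_j$, one has $P_j Q = Q P_j = 0$, which makes $P'_j$ the orthogonal projection of rank $L'_j + 1$ onto $W_j + \RR v$. Replacing $(P_i, P_j)$ by $(P'_i, P'_j)$ preserves the identity $\sum_k P_k = \alpha\mathbf I$ and produces a TFF realizing the new rank sequence $\mathbf L$.

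The only nontrivial input is the dimension inequality $\dim(W_i \cap W_j^\perp) \ge L'_i - L'_j$, which is immediate from $\dim(U+V) \le N$. Beyond that, the argument amounts to the observation that a single unit of rank can always be ``Robin-Hooded'' from a larger projection to a smaller one along a direction in the range of the former that is orthogonal to the range of the latter. I do not foresee any hidden obstacle.
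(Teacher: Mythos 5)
Your proof is correct and follows essentially the same route as the paper's: the paper likewise reduces majorization to a chain of single-box moves (constructed explicitly rather than cited) and realizes each move by its Lemma \ref{2proj}, which transfers one unit of rank between two projections using a nonzero vector in $W_i \cap W_j^\perp$ obtained from the same dimension count $\dim W_i + \dim W_j^\perp > N$. The only cosmetic difference is that you descend from $\mathbf L'$ to $\mathbf L$ while the paper ascends, and you cite the covering-move characterization of dominance order where the paper verifies it by a short recursive construction.
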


In the proof of Theorem \ref{major} we use the following elementary result on a sum of two projections.

\begin{lemma}\label{2proj}
Fix positive integers $p>q\ge 0$. Let $P$ and $Q$ be two orthogonal projection of ranks $p$ and
$q$, resp.  Then, there exists orthogonal projections $P'$ and $Q'$ of ranks $p-1$ and $q+1$,
resp., such that $P+Q=P'+Q'$.
\end{lemma}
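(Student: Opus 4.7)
The plan is to peel off a rank-one projection from $P$ and attach it to $Q$ in a way that preserves the sum. The key observation is that since $\operatorname{rank} P > \operatorname{rank} Q$, the subspace $\operatorname{range}(P)$ cannot lie entirely inside $\operatorname{range}(Q)$, and in fact it must meet $\operatorname{range}(Q)^\perp$ nontrivially.

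First I would perform the dimension count: letting $E=\operatorname{range}(P)$ and $F=\operatorname{range}(Q)$, the inequality
\[
\dim(E \cap F^\perp) \ge \dim E + \dim F^\perp - N = p + (N-q) - N = p-q \ge 1
\]
guarantees a unit vector $e \in E \cap F^\perp$. Let $R$ denote the rank-one orthogonal projection onto $\operatorname{span}(e)$. Then by construction $R \le P$ (equivalently $PR=RP=R$) and $RQ=QR=0$.

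Next I would set $P' := P - R$ and $Q' := Q + R$, so that trivially $P'+Q' = P+Q$. It remains to check that $P'$ and $Q'$ are orthogonal projections of the prescribed ranks. Self-adjointness is immediate. For $P'$, a direct computation using $PR=RP=R$ gives $(P-R)^2 = P - 2R + R = P-R$, and $\operatorname{rank}(P-R) = \operatorname{tr}(P-R) = p-1$. For $Q'$, using $RQ=QR=0$, one finds $(Q+R)^2 = Q + R = Q'$, and $\operatorname{rank}(Q+R) = q+1$.

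The only nontrivial step is the existence of the unit vector $e$, which is handled by the dimension count above; everything afterwards is a routine verification. I do not expect any genuine obstacle here, since the hypothesis $p>q$ is precisely what makes the intersection $E \cap F^\perp$ nonzero.
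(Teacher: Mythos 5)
Your proof is correct and follows essentially the same route as the paper: both rely on the dimension count $\dim(\operatorname{range}(P)\cap\operatorname{range}(Q)^\perp)\ge p+(N-q)-N>0$ to extract a rank-one projection $R$ with $R\le P$ and $RQ=QR=0$, and then transfer $R$ from $P$ to $Q$. Your verification that $P-R$ and $Q+R$ are projections of the stated ranks is just a more explicit write-up of what the paper leaves implicit.
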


\begin{proof}
Assume we have two projections $P$ and $Q$ with ranks $p>q$ that act on an $N$ dimensional vector
space $V$.  Then, we can decompose $V$ into the eigenspaces of $P$ and $Q$ such that
\[ V=V_P\oplus V_P^\perp, \qquad
V=V_Q\oplus V_Q^\perp,
\]
where $V_P$ and $V_P^\perp$ denote the $1$-eigenspace and $0$-eigenspace, resp.  Since $p>q$, we have that $p+(N-q)>N$ and hence
$\dim(V_P\cap V_Q^\perp)>0$.
Choose a nonzero vector in $V_P\cap V_Q^\perp$ and let $R$ denote the corresponding
rank $1$ projection.  Then, we can decompose
$P=\bar P + R$, where $\bar P$ is a rank $p-1$ projection.  Moreover, $Q+R$ is a
projection of rank $q+1$.
Thus, $P+Q=\bar P +(Q+R)$, which completes the proof of the lemma.
\end{proof}

\begin{proof}[Proof of Theorem \ref{major}]
Since $\mathbf L \preccurlyeq \mathbf L'$ we can find a sequence of partitions $\mathbf L = \mathbf L^0 \preccurlyeq \mathbf L^1 \preccurlyeq \ldots \preccurlyeq \mathbf L^n =\mathbf L'$ such that any two consecutive partitions $\mathbf L^{j-1}$ and $\mathbf L^j$, $j=1,\ldots,n$, differ at exactly two positions by $\pm 1$. That is, for each $j=1,\ldots,n$, there exist two positions $m< m'\in\NN$ such that
\begin{equation}\label{major2}
\begin{aligned}
\mathbf L^{j-1} &= (*,\ldots,*,\tilde L_m\phantom{+11},*,\ldots,*, \tilde L_{m'} \phantom{+11},*,\ldots,*),\\
\mathbf L^{j} &= (*,\ldots,*,\tilde L_{m}+1,*,\ldots,*,\tilde L_{m'}-1,*,\ldots,*),
\end{aligned}
\end{equation}
where the remaining values, denoted by $*$, are the same. Such $\mathbf L^j$'s can be easily constructed by the following recursive procedure.

Given the initial partitions $\mathbf L$ and $\mathbf L'$ we append extra zeros to $\mathbf L'$ so that $\mathbf L$ and $\mathbf L'$ have the same length. Define $m$ to be the first position such that initial subsequences $(L_1,\ldots, L_m)$ and $(L'_1,\ldots,L'_m)$ are not the same. Likewise, $m'$ is the last position such that the ending subsequences $(L_{m'},\ldots)$ and $(L'_{m'},\ldots)$ are not the same. Define $\mathbf L^1$ from $\mathbf L$ by replacing $L_m \to L_m+1$ and $L_{m'} \to L_{m'}-1$. It is not difficult to see that $\mathbf L^1$ forms a weakly decreasing sequence and $\mathbf L =\mathbf L_0 \preccurlyeq \mathbf L^1 \preccurlyeq \mathbf L'$.  Repeating this procedure recursively we define a sequence $\mathbf L^1 \preccurlyeq \mathbf L^2 \preccurlyeq \ldots \preccurlyeq \mathbf L'$. After a finite number of steps we must arrive at $\mathbf L^n=\mathbf L'$.

Observe that the ranks in \eqref{major2} satisfy $\tilde L_m \ge \tilde L_{m'}$. By Lemma \ref{2proj} applied to two projections with ranks $p=\tilde L_m +1> q=\tilde L_{m'}-1\ge 0$, if $\mathbf L^{j} \in \TFF(\alpha,N)$, then $\mathbf L^{j-1} \in \TFF(\alpha,N)$. Thus, a repetitive application of Lemma \ref{2proj} proves Theorem \ref{major}.
\end{proof}

We remark that the above proof does not use the tightness assumption in any way. Consequently, Theorem \ref{major} holds for general (not necessarily tight) fusion frames with a prescribed frame operator.

\subsection{Dualities}

In this subsection we shall establish two dualities for TFF sequences. The first duality involves taking orthogonal projections of the same ambient space and is a straightforward generalization of \cite[Theorem 6]{CFMUZ}.

\begin{theorem}\label{du1}
Suppose that $(L_1,L_2,\ldots, L_K) \in \TFF(\alpha,N)$. Then, $(N-L_K,N-L_{K-1},\ldots, N-L_1) \in \TFF(K-\alpha,N)$.
\end{theorem}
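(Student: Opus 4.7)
My approach is the natural complementation trick: if a TFF is realized by projections $P_i$, then the orthogonal complements $Q_i = \mathbf{I} - P_i$ furnish the dual TFF. The only things to check are ranks, the new frame bound, the weakly decreasing order, and a small positivity issue.

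Concretely, suppose $(L_1,\ldots,L_K) \in \TFF(\alpha,N)$ is witnessed by orthogonal projections $P_1,\ldots,P_K$ with $\operatorname{rank} P_i = L_i$ and $\sum_{i=1}^K P_i = \alpha \mathbf{I}$. I would set $Q_i := \mathbf{I} - P_i$. Since $P_i$ is an orthogonal projection on $\RR^N$, so is $Q_i$, and $\operatorname{rank} Q_i = N - L_i$. Summing gives
\[
\sum_{i=1}^K Q_i \;=\; K\mathbf{I} - \sum_{i=1}^K P_i \;=\; (K-\alpha)\mathbf{I},
\]
so $Q_1,\ldots,Q_K$ realize a tight fusion frame with frame bound $K - \alpha$. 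Because $L_1 \ge L_2 \ge \cdots \ge L_K$, the new ranks satisfy $N - L_1 \le N - L_2 \le \cdots \le N - L_K$; reindexing the projections in reverse order $Q_K, Q_{K-1}, \ldots, Q_1$ then yields the weakly decreasing rank sequence $(N - L_K, N - L_{K-1}, \ldots, N - L_1)$ required by the definition of a TFF sequence.

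The only subtlety I anticipate is ensuring that all entries $N - L_i$ are strictly positive, so that the dual is an honest partition. Since each $P_i$ is a projection on $\RR^N$ we have $L_i \le N$, hence $N - L_i \ge 0$. If some $L_i = N$ then the corresponding $P_i = \mathbf{I}$ and $Q_i = 0$ is the zero projection; in that case one drops the zero entries from the dual sequence (shortening the list accordingly), while the identity $\sum Q_i = (K-\alpha)\mathbf{I}$ is unaffected. This is the only real point requiring care; the rest is bookkeeping. No combinatorial machinery is needed, and the argument is a direct extension of the equal-rank version from \cite{CFMUZ}.
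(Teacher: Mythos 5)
Your proof is correct and is essentially the same as the paper's: the paper also takes $\mathbf{I} - P_i$, notes $\sum_{i=1}^K(\mathbf{I}-P_i) = (K-\alpha)\mathbf{I}$ with $\operatorname{rank}(\mathbf{I}-P_i) = N - L_i$, and concludes. Your extra care about reindexing and dropping zero entries when some $L_i = N$ is a reasonable refinement the paper leaves implicit, but it does not change the argument.
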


\begin{proof}
Let $P_1, \ldots, P_K$ be the orthogonal projections with $\operatorname{rank} P_i=L_i$ such that $\sum_{i=1}^K P_i = \alpha \mathbf I$. Clearly, $\sum_{i=1}^K (\mathbf I -P_i) = (K-\alpha) \mathbf I$ and $\operatorname{rank}(\mathbf I -P_i)=N-L_i$. This shows the theorem.
\end{proof}

The second result relies on taking more subtle orthogonal complements based on a dilation theorem for tight frames with bound $1$, also known as Parseval frames. It is known that every Parseval frame can be obtained as a projection of an orthogonal basis of some higher dimensional space. The complementary projection gives rise to another Parseval frame, which is often called the {\it Naimark's complement} of the original frame. This leads to the following result

\begin{theorem}\label{du2}
Suppose that $(L_1,L_2,\ldots, L_K)\in \TFF(\alpha,N)$. Then, the same sequence $(L_1,L_2,\ldots, L_K) \in \TFF(\tilde \alpha,\tilde N)$, where the dimension $\tilde N =(\sum_{i=1}^K L_i - N)$ and the frame bound $\tilde \alpha=\alpha/(\alpha-1)=\alpha N/\tilde N$.
\end{theorem}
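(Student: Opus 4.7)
The plan is to apply a Naimark-type dilation to the ``disassembled'' form of the hypothesis $\alpha\mathbf I=\sum_{i=1}^K P_i$. The only real input from outside the paper is Naimark's dilation theorem for Parseval frames (already recalled in the paragraph preceding the statement); the main obstacle is just careful bookkeeping of two normalization constants, $\alpha^{-1/2}$ (for the Parseval rescaling) and $\alpha/(\alpha-1)$ (for recovering the complementary projections from non-unit vectors).

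First I would fix, for each $i$, an orthonormal basis $e_{i,1},\ldots,e_{i,L_i}$ of $W_i=\Image P_i$, so that $P_i=\sum_{j=1}^{L_i}e_{i,j}e_{i,j}^{*}$. The identity $\sum_i P_i=\alpha\mathbf I$ then reads as
\[
\sum_{i,j}(\alpha^{-1/2}e_{i,j})(\alpha^{-1/2}e_{i,j})^{*}=\mathbf I_{\RR^{N}},
\]
so $\{\alpha^{-1/2}e_{i,j}\}$ is a Parseval frame for $\RR^{N}$ indexed by a set of cardinality $M:=\sum_i L_i=\alpha N$. By Naimark's theorem there is an orthonormal basis $\{g_{i,j}\}$ of an ambient space $\RR^{M}\cong \RR^{N}\oplus\RR^{\tilde N}$ with $\tilde N=M-N$ such that $g_{i,j}=(\alpha^{-1/2}e_{i,j},\tilde e_{i,j})$ for some $\tilde e_{i,j}\in\RR^{\tilde N}$. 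Next, for $i$ fixed, orthonormality of $g_{i,j}$ and $g_{i,k}$ gives
\[
\langle \tilde e_{i,j},\tilde e_{i,k}\rangle=\delta_{jk}-\tfrac{1}{\alpha}\langle e_{i,j},e_{i,k}\rangle=\tfrac{\alpha-1}{\alpha}\delta_{jk},
\]
so the $L_i$ vectors $\tilde e_{i,1},\ldots,\tilde e_{i,L_i}$ are mutually orthogonal with common squared norm $(\alpha-1)/\alpha$. Assuming $\alpha>1$ (the degenerate case $\alpha=1$ gives $\tilde N=0$ and is vacuous), these vectors are linearly independent, so $\tilde W_i:=\operatorname{span}\{\tilde e_{i,j}\}_{j=1}^{L_i}$ has dimension exactly $L_i$.

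Finally, let $\tilde P_i$ denote the orthogonal projection of $\RR^{\tilde N}$ onto $\tilde W_i$. The normalization of the $\tilde e_{i,j}$ computed above yields $\tilde P_i=\frac{\alpha}{\alpha-1}\sum_{j=1}^{L_i}\tilde e_{i,j}\tilde e_{i,j}^{*}$. Since $\{g_{i,j}\}$ is an orthonormal basis of $\RR^{M}$, we have $\sum_{i,j}g_{i,j}g_{i,j}^{*}=\mathbf I_{\RR^{M}}$; reading off the $\RR^{\tilde N}$-diagonal block gives $\sum_{i,j}\tilde e_{i,j}\tilde e_{i,j}^{*}=\mathbf I_{\RR^{\tilde N}}$, and therefore
\[
\sum_{i=1}^{K}\tilde P_i=\frac{\alpha}{\alpha-1}\,\mathbf I_{\RR^{\tilde N}}=\tilde\alpha\,\mathbf I_{\RR^{\tilde N}}.
\]
Thus $(L_1,\ldots,L_K)\in\TFF(\tilde\alpha,\tilde N)$, with the ranks unchanged because $\dim\tilde W_i=L_i$ for each $i$, which completes the proposed proof.
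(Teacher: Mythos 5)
Your proof is correct and follows essentially the same route as the paper: both arguments take orthonormal bases of the subspaces $W_i$, assemble them into a (Parseval, after your rescaling by $\alpha^{-1/2}$) frame of $M=\sum_i L_i$ vectors, pass to the Naimark complement via a unitary dilation, and then verify that for each fixed $i$ the complementary vectors are mutually orthogonal of equal norm $\sqrt{(\alpha-1)/\alpha}$, so they span $L_i$-dimensional spaces whose projections sum to $\frac{\alpha}{\alpha-1}\mathbf I$. The only difference is cosmetic: the paper performs the dilation explicitly by extending the $N\times M$ synthesis matrix to a square matrix with orthogonal rows, while you invoke Naimark's theorem as stated in the preceding paragraph; your normalization bookkeeping is, if anything, slightly more careful than the paper's.
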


\begin{proof} For each $k=0, \ldots, K$, define $\sigma_k =\sum_{i=1}^k L_i$ with the convention that $\sigma_0=0$. Our assumption implies that there exists a tight frame $\{v_j\}_{j=1}^{\sigma_K}$ in $\RR^N$ such that for each $k=1,\ldots, K$, the subcollection $\{v_j\}_{j=1+\sigma_{k-1}}^{\sigma_k}$ is an orthonormal sequence which spans the $L_k$ dimensional space $W_k$ from the definition of a TFF. Treating $v_1, \ldots, v_{\sigma_K}$ as column vectors we obtain an $N \times \sigma_K$ matrix $U$ with orthogonal rows each of norm $\alpha = \sigma_K/N$. This is due to the fact that $\{v_j\}_{j=1}^{\sigma_K}$ is a tight frame with constant $\alpha$.

Let $\tilde U$ be an extension of $U$ to a $\sigma_K \times \sigma_K$ matrix with all orthogonal rows of norm $\alpha$. In other words, $\frac{1}{\alpha} \tilde U$ is a unitary extension of $\frac{1}{\alpha} U$ which has orthonormal rows. Let $\{w_j\}_{j=1}^{\sigma_K}$ be the column vectors constituting the $(\sigma_K-N) \times \sigma_K$ submatrix of the bottom rows of $\tilde  U$. Since $\frac{1}{\alpha} \tilde U$ is  an orthogonal matrix we have
\[
\lan v_j, v_{j'} \ran + \lan w_j, w_{j'} \ran = \alpha \delta_{j,j'} \qquad \text{for all } j,j'=1,\ldots, \sigma_K.
\]
By the block orthogonality of $v_j$'s we have that for each block $k=1,\ldots, K$,
\[
\lan w_j, w_{j'} \ran = (\alpha-1) \delta_{j,j'} \qquad \text{for all } j,j'=1+\sigma_{k-1}, \ldots, \sigma_k.
\]
This means that the vectors $\{w_j\}_{j=1+\sigma_{k-1}}^{\sigma_k}$ form an orthogonal sequence which span some $L_k$ dimensional space $\tilde W_k$. Moreover, $\{w_j\}_{j=1}^{\sigma_K}$ is a tight frame with a constant $\alpha$ for $(\sigma_K - N)$ dimensional space. Consequently, unit norm vectors $\{\frac{1}{\alpha-1} w_j\}_{j=1}^{\sigma_K}$, which are block orthonormal, form a tight frame with a constant $\frac{\alpha}{\alpha-1}$. This leads to the decomposition $\tilde P_1 + \ldots \tilde P_K = \frac{\alpha}{\alpha-1}\mathbf I$, where $\tilde P_k$ is an orthogonal projection onto $\tilde W_k$. This completes the proof of the theorem.
\end{proof}

As an immediate corollary of Theorem \ref{du2} we can reduce the study of TFF sequences to the case when $1<\alpha< 2$; the case $\alpha =2$ does not cause any difficulties as we will see later.

\begin{corollary}\label{du3}
 If $\alpha >1$ is such that $\alpha N \in \NN$, then $\TFF(\alpha,N)=\TFF(\tilde \alpha, \tilde N)$, where $1/\alpha +1/\tilde \alpha = 1$ and $\tilde N = N(\alpha-1)$.
\end{corollary}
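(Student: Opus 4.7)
The plan is to derive the corollary as a double application of the Naimark duality (Theorem \ref{du2}), once in each direction. The key observation is that the correspondence $(\alpha,N) \mapsto (\tilde\alpha,\tilde N)$ defined by $1/\alpha + 1/\tilde\alpha = 1$ and $\tilde N = N(\alpha-1)$ is an involution: if I set $\alpha' = \tilde\alpha$ and $N' = \tilde N$, then one checks that $N'(\alpha'-1) = N(\alpha-1)\bigl(\alpha/(\alpha-1) - 1\bigr) = N$, and $\alpha'/(\alpha'-1) = (\alpha/(\alpha-1))/(1/(\alpha-1)) = \alpha$. So Theorem \ref{du2} applied with parameters $(\alpha',N')$ returns exactly $(\alpha,N)$.

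First I would verify that the parameters in Theorem \ref{du2} match our definition. Suppose $(L_1,\ldots,L_K)\in\TFF(\alpha,N)$. The trace identity gives $\sum_{i=1}^K L_i = \alpha N$, so the value $\tilde N = \sum L_i - N$ appearing in Theorem \ref{du2} equals $N(\alpha-1)$, and $\tilde\alpha = \alpha N/\tilde N = \alpha/(\alpha-1)$ satisfies $1/\alpha + 1/\tilde\alpha = 1$. Thus Theorem \ref{du2} gives the inclusion $\TFF(\alpha,N)\subseteq \TFF(\tilde\alpha,\tilde N)$.

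For the reverse inclusion, I would confirm that Theorem \ref{du2} can be legitimately applied to $\TFF(\tilde\alpha,\tilde N)$: since $\alpha > 1$, we have $\tilde\alpha = \alpha/(\alpha-1) > 1$, and $\tilde\alpha\tilde N = \alpha N \in \NN$ by hypothesis. Applying Theorem \ref{du2} to any $(L_1,\ldots,L_K)\in\TFF(\tilde\alpha,\tilde N)$ then produces an element of $\TFF(\alpha'',N'')$ where, by the involution computation above, $(\alpha'',N'') = (\alpha,N)$. This gives $\TFF(\tilde\alpha,\tilde N)\subseteq \TFF(\alpha,N)$ and completes the equality.

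There is essentially no obstacle here beyond the bookkeeping; the only thing to be slightly careful about is ensuring the hypotheses of Theorem \ref{du2} ($\tilde\alpha > 1$, integer rank total) transfer cleanly when we run the duality the second time, which they do because the involution preserves the integrality condition $\alpha N = \tilde\alpha\tilde N \in \NN$.
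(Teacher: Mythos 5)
Your proposal is correct and follows the same route the paper intends: the paper presents Corollary \ref{du3} as an immediate consequence of Theorem \ref{du2}, and your argument simply makes explicit the bookkeeping behind that — the trace identity $\sum L_i = \alpha N$ identifying $\tilde N = N(\alpha-1)$, and the involution $(\alpha,N)\mapsto(\tilde\alpha,\tilde N)$ whose second application yields the reverse inclusion. Nothing is missing; the verification that $\tilde\alpha>1$ and $\tilde\alpha\tilde N=\alpha N\in\NN$ is exactly the check needed to apply Theorem \ref{du2} in both directions.
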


Observe that if there exists a TFF sequence with parameters $(\alpha,N)$, then by computing traces we necessarily have that $\alpha N \in \NN$. Hence, without loss of generality we shall always make this assumption.


\section{Estimates on first 3 ranks} \label{S4}

In this section we find necessary and sufficient conditions on the first three largest ranks of TFF projections. Our analysis is based on two fundamental results. Theorem \ref{fill} is due to Fillmore \cite[Theorem 1]{F69}. Lemma \ref{sum2} describes the spectral properties of the sum of two projections, and it can be thought of as a generalization of Lemma \ref{2proj}.

\begin{theorem}\label{fill}
A non-negative definite hermitian matrix $S$ is a sum of projections if and only if
\begin{equation}\label{fill1}
\operatorname{trace}(S)\in \mathbb N_0
\qquad\text{and}\qquad
\operatorname{trace}(S) \ge \operatorname{rank}(S).
\end{equation}
\end{theorem}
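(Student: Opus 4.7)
The plan is to verify necessity by an elementary rank/trace count and to prove sufficiency by induction on the integer $n := \operatorname{trace}(S) \ge 0$, peeling off a single rank-one projection at each step. For necessity, if $S = P_1 + \cdots + P_K$ with each $P_i$ an orthogonal projection, then $\operatorname{trace}(S) = \sum_i \operatorname{rank}(P_i) \in \mathbb N_0$, and the subadditivity of rank yields $\operatorname{rank}(S) \le \sum_i \operatorname{rank}(P_i) = \operatorname{trace}(S)$, which is exactly \eqref{fill1}.

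The base case $n=0$ forces $S = 0$, the empty sum. For the inductive step I would exhibit a single rank-one projection $P$ such that $T := S - P$ is positive semidefinite with integer trace $n - 1$ satisfying $\operatorname{trace}(T) \ge \operatorname{rank}(T)$; the inductive hypothesis then writes $T$ as a sum of rank-one projections, and $S = T + P$ finishes the induction. Let $r := \operatorname{rank}(S)$ and let $\lambda_1 \ge \cdots \ge \lambda_r > 0$ be the positive eigenvalues of $S$ with orthonormal eigenvectors $u_1, \ldots, u_r$. Because $\lambda_1 + \cdots + \lambda_r = n \ge r$, averaging gives $\lambda_1 \ge 1$. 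If every $\lambda_i \ge 1$, then either all $\lambda_i = 1$, in which case $S$ is itself a projection and we are done, or $\lambda_1 > 1$ with $n \ge r+1$; in the latter subcase the choice $P := u_1 u_1^*$ produces $T$ of rank $r$ and trace $n - 1 \ge r$, preserving the hypothesis.

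The subtle case is $\lambda_r < 1$, which together with $n \ge r$ forces $\lambda_1 > 1$. Here a projection onto a single eigenvector of $S$ fails to drop the rank, and the trace-rank inequality can break (notably when $n = r$). The idea is to rotate within the two-dimensional subspace $\operatorname{span}(u_1, u_r)$: seek a unit vector $v = a u_1 + b u_r$ making the restriction $\operatorname{diag}(\lambda_1, \lambda_r) - vv^*$ singular. The determinant condition is
\[
\lambda_r |a|^2 + \lambda_1 |b|^2 = \lambda_1 \lambda_r,
\]
which, combined with $|a|^2 + |b|^2 = 1$, yields $|a|^2 = \lambda_1(1 - \lambda_r)/(\lambda_1 - \lambda_r)$, lying in $(0,1)$ precisely because $\lambda_1 > 1 > \lambda_r > 0$. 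The $2 \times 2$ residual then has zero determinant and positive trace $\lambda_1 + \lambda_r - 1$, hence is PSD; and since $vv^*$ vanishes on the orthogonal complement of $\operatorname{span}(u_1, u_r)$, the global $S - vv^*$ is PSD with rank $r - 1$ and trace $n - 1$.

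The main obstacle is exactly this mixed eigenvalue scenario, especially when the trace-rank margin $n - r$ vanishes and one cannot afford to preserve the rank: a projection onto any eigenvector of $S$ no longer suffices, and the two-dimensional rotation above is the technical heart of the argument, letting one peel off a rank-one projection while dropping both the rank and the trace by exactly one. Once this step is established, the induction closes and $S$ is realized as a sum of $n$ rank-one orthogonal projections.
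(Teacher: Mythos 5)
Your proof is correct, but there is no in-paper argument to compare it against: the paper states Theorem \ref{fill} as a known result, quoting Fillmore \cite[Theorem 1]{F69}, and uses it as a black box in Section \ref{S4}. Your argument is therefore a self-contained rederivation rather than a variant of something in the text. The necessity half is the standard one (the trace of a projection equals its rank, plus subadditivity of rank). The sufficiency half, by induction on $n=\operatorname{trace}(S)$, checks out in all cases: when every positive eigenvalue is $\ge 1$ you either already have a sum of rank-one projections (all eigenvalues equal to $1$) or can subtract $u_1u_1^*$ with room to spare, since $n\ge r+1$ in that subcase; when $\lambda_r<1$ (which automatically forces $r\ge 2$ and $\lambda_1>1$, as you note), the two-dimensional rotation is exactly the right device: with $v=au_1+bu_r$, the restriction of $S-vv^*$ to $\operatorname{span}(u_1,u_r)$ has determinant $\lambda_1\lambda_r-\lambda_r|a|^2-\lambda_1|b|^2$, which vanishes for $|a|^2=\lambda_1(1-\lambda_r)/(\lambda_1-\lambda_r)\in(0,1)$ precisely because $\lambda_1>1>\lambda_r>0$, and the residual is positive semidefinite of rank one since its trace $\lambda_1+\lambda_r-1$ is positive; as both $S$ and $vv^*$ preserve the splitting $\operatorname{span}(u_1,u_r)\oplus\operatorname{span}(u_1,u_r)^{\perp}$, the matrix $T=S-vv^*$ is positive semidefinite with $\operatorname{rank}(T)=r-1$ and $\operatorname{trace}(T)=n-1$, so the hypothesis $\operatorname{trace}\ge\operatorname{rank}$ survives even in the tight case $n=r$, which is indeed the only delicate point. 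Beyond replacing the citation, your route buys a constructive statement: it expresses $S$ as a sum of exactly $\operatorname{trace}(S)$ rank-one projections (the number any rank-one decomposition must have), and it would make Section \ref{S4} of the paper self-contained.
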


\begin{lemma}\label{sum2}
Let $P,Q$ be two orthogonal projections on an $N$ dimensional vector space $V$ with ranks $p,q$, resp.  For any $\lambda\in\RR,$ let $m(\lambda)$ be the multiplicity of $\lambda$ as an eigenvalue of $P+Q.$   Then, the following are true:
 \begin{enumerate}[(i)]
 \item $m(\lambda)>0 \implies \lambda\in [0,2]$,
 \item $\sum_{\lambda \in [0,2]} m(\lambda)=N$,
 \item $m(1)\geq |p-q|$,
 \item $\lambda\in (0,2) \implies m(\lambda)=m(2-\lambda)$,
 \item $m(0)-m(2)=N-p-q.$
 \end{enumerate}

Conversely, if $0\le p,q \le N$, and $m:\RR \to \NN_0$ satisfies (i)--(v), then there exists orthogonal projections $P,Q$ of ranks $p,q$, such that $m$ is a multiplicity function of $P+Q$.
 \end{lemma}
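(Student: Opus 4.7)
My plan is to reduce both directions to the classical Halmos decomposition of two orthogonal projections. This gives an orthogonal splitting $V = V_1 \oplus V_2 \oplus V_3 \oplus V_4 \oplus V_5$, where $P = Q = \mathbf I$ on $V_1$, $(P,Q) = (\mathbf I, 0)$ on $V_2$, $(P,Q) = (0, \mathbf I)$ on $V_3$, $P = Q = 0$ on $V_4$, and $V_5$ is a further orthogonal sum of two-dimensional subspaces on each of which $P$ and $Q$ are rank-one projections onto distinct lines meeting at some angle $\theta \in (0, \pi/2)$. A $2 \times 2$ computation on any such block shows that $P+Q$ has eigenvalues $1 \pm \cos\theta$, which lie in $(0,1) \cup (1,2)$.

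With this decomposition in hand, the forward direction is essentially bookkeeping. Property (i) holds because eigenvalues on $V_1,\ldots,V_4$ lie in $\{0,1,2\}$ while those on $V_5$ lie in $(0,2)$; (ii) follows from $\sum_i \dim V_i = N$; and (iv) is the symmetry $1 + \cos\theta \leftrightarrow 1 - \cos\theta$ on $V_5$. For (iii) I note $m(1) = \dim V_2 + \dim V_3$ while $p - q = \dim V_2 - \dim V_3$. For (v), the identities $p = \dim V_1 + \dim V_2 + \tfrac{1}{2}\dim V_5$ and its $q$-analogue, combined with $\sum_i \dim V_i = N$, yield $m(0) - m(2) = \dim V_4 - \dim V_1 = N - p - q$.

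For the converse, given $p, q$, and $m$ satisfying (i)--(v), I would reverse-engineer a decomposition by setting $\dim V_1 = m(2)$, $\dim V_4 = m(0)$, $\dim V_2 = \tfrac{1}{2}(m(1) + p - q)$, $\dim V_3 = \tfrac{1}{2}(m(1) - p + q)$, and for each $\lambda \in (0,1)$ with $m(\lambda) > 0$ attaching $m(\lambda)$ two-dimensional blocks with angle $\theta_\lambda = \arccos(1-\lambda)$. Non-negativity of $\dim V_2, \dim V_3$ is precisely (iii); integrality follows because (ii) and (iv) force $m(0) + m(1) + m(2) \equiv N \pmod 2$, which together with (v) gives $m(1) \equiv p - q \pmod 2$. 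The dimensions sum to $N$ by (ii) and (iv), and a direct count confirms $\operatorname{rank} P = p$ and $\operatorname{rank} Q = q$.

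The only potential obstacle is invoking the Halmos decomposition itself, but it is sufficiently classical to be taken off the shelf (one derivation goes through the spectral theorem applied to $PQP$ restricted to the range of $P$). The authors appear instead to approach the problem through Knutson--Tao honeycomb models, which characterize the spectra of sums of arbitrary Hermitian matrices and specialize here to the two-projection case; that route is heavier machinery but fits more naturally with the combinatorial flavor of the rest of the paper.
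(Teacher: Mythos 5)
Your proof is correct, but it follows a genuinely different route from the paper's. The paper never invokes a structure theorem for the pair $(P,Q)$: in the forward direction it derives (i)--(iii) directly from the eigenspace decompositions $V=V_P\oplus V_P^\perp=V_Q\oplus V_Q^\perp$, proves the symmetry (iv) by exhibiting an explicit invertible intertwiner $f_\lambda(v)=v_P+\bigl(\tfrac{\lambda}{\lambda-2}\bigr)v_P'$ that carries the $\lambda$-eigenspace of $P+Q$ onto the $(2-\lambda)$-eigenspace, and obtains (v) from a dimension count of $V_P+V_Q$; for the converse it switches to the Knutson--Tao honeycomb model, sketching the relevant honeycombs and leaving the ``zero-tension'' bookkeeping to the reader. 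Your Halmos decomposition handles both halves at once: the forward direction becomes transparent bookkeeping, and---more significantly---your converse is fully explicit and elementary, with the needed verifications all sound: non-negativity of $\dim V_2,\dim V_3$ is exactly (iii), integrality follows from your parity observation $m(1)\equiv p-q \pmod 2$ (a point the paper never has to confront, since it delegates the converse to honeycombs), the dimensions sum to $N$ by (ii) and (iv), and the rank count $\operatorname{rank}P=m(2)+\tfrac12(m(1)+p-q)+\tfrac12\dim V_5=p$ closes using (v). What each approach buys: yours is self-contained modulo one classical off-the-shelf theorem (which in finite dimensions does follow from the spectral theorem applied to $PQP$, as you say) and gives a constructive, checkable converse; the paper's forward argument needs no structure theory at all, and its honeycomb technique, though heavier and only sketched here, is not limited to two projections---it is reused in the proof of Theorem \ref{re} to control the spectrum when a third projection is added, which your two-subspace decomposition cannot do.
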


 \begin{proof}
Since $P,Q$ are hermitian, we can decompose $V$ as a direct sum of eigenspaces
$$V=V_P \oplus V^\perp_P=V_Q\oplus V^\perp_Q$$
where $V_P$ denotes the 1-eigenspace and $V_P^\perp$ the 0 eigenspace of $P$. Thus, $p=\dim(V_P)$ and $q=\dim(V_Q).$ Parts (i)--(iii) follow by basic linear algebra.

To prove part (iv) we define $f_{\lambda}:V\rightarrow V$ by
$$f_{\lambda}(v):=v_P+\left(\frac{\lambda}{\lambda-2}\right)v'_P,$$
where $v=v_P+v'_P$ is induced by the orthogonal decomposition $V=V_P \oplus V^\perp_P$ and $\lambda\in(0,2).$  Since $f_{\lambda}$ is an invertible and linear map, it suffices to show that if $(P+Q)v=\lambda v$, then $(P+Q)f_{\lambda}(v)=(2-\lambda)f_{\lambda}(v).$  Write
$$ v_P=x_Q+x'_Q\quad  \text{and}\quad  v'_P=y_Q+y'_Q$$ according to the decomposition $V=V_Q\oplus V^\perp_Q.$ Then,
$$(P+Q)v=v_P+x_Q+y_Q=2x_Q+y_Q+x'_Q=\lambda(x_Q+x'_Q+y_Q+y'_Q)$$ and hence
$$(2-\lambda)x_Q+(1-\lambda)y_Q=(\lambda-1)x'_Q +\lambda y'_Q.$$
This implies that
\begin{equation}\label{eq:xy_relation}
(2-\lambda)x_Q=(\lambda-1)y_Q\quad \text{and}\quad (1-\lambda)x'_Q=\lambda y'_Q\end{equation}
since $V_Q\cap V_Q^\perp=\{0\}.$

By equation \eqref{eq:xy_relation}, we have that
\begin{eqnarray*}
(P+Q)f_{\lambda}(v)&=&2x_Q+x'_Q +\left(\frac{\lambda}{\lambda-2}\right)y_Q\\
&=& (2-\lambda)v_P+\lambda x_Q+(\lambda-1)x'_Q+\left(\frac{\lambda}{\lambda-2}\right)y_Q\\
&=& (2-\lambda)v_P+\left(\frac{\lambda(1-\lambda)}{\lambda-2}\right)y_Q-\lambda y'_Q+\left(\frac{\lambda}{\lambda-2}\right)y_Q\\
&=& (2-\lambda)v_P-\lambda y_Q-\lambda y'_Q\\
&=& (2-\lambda)\bigg(v_P+\left(\frac{\lambda}{\lambda-2}\right) v'_P\bigg)=(2-\lambda)f_{\lambda}(v).
\end{eqnarray*}

This proves part (iv). To prove part (v), we consider the projection map $$g:V\rightarrow V_P+V_Q$$ where $V_P+V_Q$ denotes the span of vectors in $V_P, V_Q.$  We have that
$$\dim(V_P+V_Q)=\dim(V_P)+\dim(V_Q)-m(2)=p+q-m(2).$$
But $$\dim(V_P+V_Q)=N-\dim(\ker g)=N-m(0).$$  This shows that the properties (i)--(v) are necessary.

A quick way to see the converse direction is to utilize the honeycomb model of Knutson and Tao \cite{KT99, KT01}.
The honeycombs corresponding to triples $(P,Q,-(P+Q))$, where $p>q$ can be represented by one of the following diagrams. In the case $p=q$ the line corresponding the eigenvalue $-1$ of $-(P+Q)$ might not be present. We leave the details to the reader. This involves finding multiplicities of unlabelled line segments to satisfy the ``zero-tension'' property.

\begin{figure}[!ht]
$$ \begin{tikzpicture}[scale=1.2]
\draw (-3,1) node {$P$};\draw (3,1) node {$Q$};\draw (-2.8,-0.8) node {$p$};\draw (0.8,0.8) node {$q$};
\draw (-1.5,0.4) node {$N-p$};\draw (2.5,-0.1) node {$N-q$};
\draw (-2.3,-2.3) node {$m(2)$};\draw (2.3,-2.3) node {$m(0)$};
    \draw [->] (-1.732,-1) coordinate (a_1) -- (1.732,1) node [above,right] {$1$};
    \draw [->] (1.732,-1) coordinate (a_3) -- (-1.732,1) node [above,left] {$0$};;
    \draw [->] (0,-2) -- (3.464,0) node [above,right] {$0$};
    \draw [->] (a_3) -- (1.732,-3);
    \draw [->] (0,0) -- (0,-3);
    \draw [->] (0,-2) -- (-3.464,0) node [above,left] {$1$};
    \draw [->] (a_1) -- (-1.732,-3);
    \draw [->, dashed] (-0.866,-1.5) -- (-0.866,-3);
    \draw [->, dashed] (0.866,-1.5) -- (0.866,-3);
    \draw [dashed] (-0.866,-1.5) -- (0,-1);
    \draw [dashed] (0.866,-1.5) -- (0,-1);
    \draw [->, dashed] (-0.433,-1.75) -- (-0.433,-3);
    \draw [->, dashed] (0.433,-1.75) -- (0.433,-3);
    \draw [dashed] (-0.433,-1.75) -- (0,-1.5);
    \draw [dashed] (0.433,-1.75) -- (0,-1.5);
    \draw [->, dashed] (-1.299,-1.25) -- (-1.299,-3);
    \draw [->, dashed] (1.299,-1.25) -- (1.299,-3);
    \draw [dashed] (-1.299,-1.25) -- (0,-0.5);
    \draw [dashed] (1.299,-1.25) -- (0,-0.5);
\end{tikzpicture}$$
$$-2\ \, \lambda_1\ \lambda_2\, ...  -1\ ...\ \lambda_2'\ \lambda_1'\ 0\quad $$
$$-(P+Q)$$
\caption{Honeycomb with $m(2)> 0$, $m(0)> 0$ and $\lambda_i':=-2-\lambda_i.$}\end{figure}
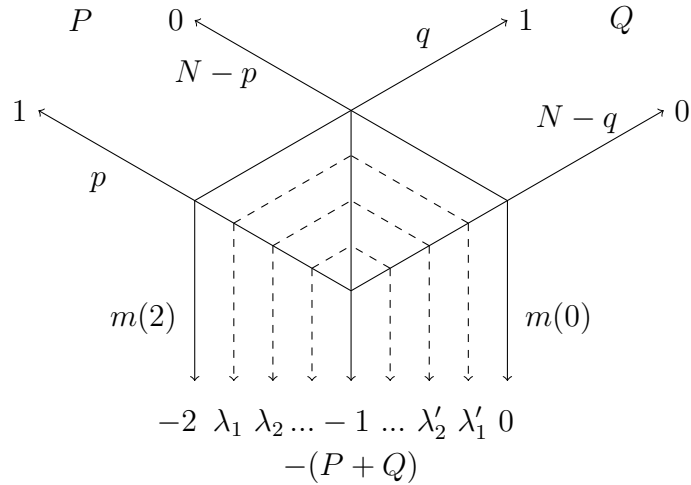

\begin{figure}[!t]
$$ \begin{tikzpicture}[scale=.8]
    \draw [->] (0,0) coordinate (a_1) -- (1.732,1);
    \draw [->] (1.732,-1) coordinate (a_3) -- (-1.732,1);
    \draw [->] (0,-2) -- (3.464,0);
    \draw [->] (a_3) -- (1.732,-3);
    \draw [->] (0,0) -- (0,-3);
    \draw [->] (0,-2) -- (-3.464,0);
    \draw [->, dashed] (-0.866,-1.5) -- (-0.866,-3);
    \draw [->, dashed] (0.866,-1.5) -- (0.866,-3);
    \draw [dashed] (-0.866,-1.5) -- (0,-1);
    \draw [dashed] (0.866,-1.5) -- (0,-1);
    \draw [->, dashed] (-0.433,-1.75) -- (-0.433,-3);
    \draw [->, dashed] (0.433,-1.75) -- (0.433,-3);
    \draw [dashed] (-0.433,-1.75) -- (0,-1.5);
    \draw [dashed] (0.433,-1.75) -- (0,-1.5);
    \draw [->, dashed] (-1.299,-1.25) -- (-1.299,-3);
    \draw [->, dashed] (1.299,-1.25) -- (1.299,-3);
    \draw [dashed] (-1.299,-1.25) -- (0,-0.5);
    \draw [dashed] (1.299,-1.25) -- (0,-0.5);
\end{tikzpicture}\quad \begin{tikzpicture}[scale=.8]
    \draw [->] (-1.732,-1) coordinate (a_1) -- (1.732,1);
    \draw [->] (0,0) -- (-1.732,1);
    \draw [->] (0,-2) -- (3.464,0);
    \draw [->] (0,0) -- (0,-3);
    \draw [->] (0,-2) -- (-3.464,0);
    \draw [->] (a_1) -- (-1.732,-3);
    \draw [->, dashed] (-0.866,-1.5) -- (-0.866,-3);
    \draw [->, dashed] (0.866,-1.5) -- (0.866,-3);
    \draw [dashed] (-0.866,-1.5) -- (0,-1);
    \draw [dashed] (0.866,-1.5) -- (0,-1);
    \draw [->, dashed] (-0.433,-1.75) -- (-0.433,-3);
    \draw [->, dashed] (0.433,-1.75) -- (0.433,-3);
    \draw [dashed] (-0.433,-1.75) -- (0,-1.5);
    \draw [dashed] (0.433,-1.75) -- (0,-1.5);
    \draw [->, dashed] (-1.299,-1.25) -- (-1.299,-3);
    \draw [->, dashed] (1.299,-1.25) -- (1.299,-3);
    \draw [dashed] (-1.299,-1.25) -- (0,-0.5);
    \draw [dashed] (1.299,-1.25) -- (0,-0.5);
\end{tikzpicture}$$
\caption{Honeycombs with $m(2)=0$ and $m(0)=0$, respectively.}\end{figure}
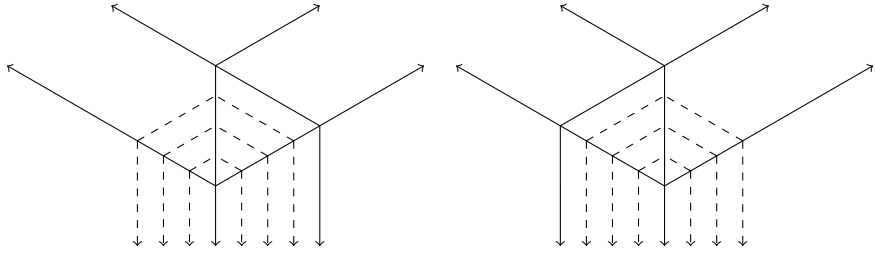
\end{proof}

Using Theorem \ref{fill} and Lemma \ref{sum2} our goal is to find necessary and sufficient conditions on the first three largest ranks of projections in a TFF.

\begin{theorem}\label{re}
Suppose that $1<\alpha<2$ and $(L_1\geq L_2\geq\cdots\geq L_K)\in \TFF(\alpha,N)$. Then, we have the following necessary conditions:
\begin{align}\label{re1}
L_1 & \le (\alpha-1)N,
\\
\label{re2}
L_1 + L_2 & \le N,
\\
\label{re3}
L_1 + L_2  + L_3 & \le \begin{cases}
N & \alpha<3/2, \\
2(\alpha-1)N & \alpha > 3/2.
\end{cases}
\end{align}

Conversely, if $L_1 \ge L_2 \ge L_3$ satisfy \eqref{re1}, \eqref{re2}, and \eqref{re3}, then there exists $\mathbf L \in \TFF(\alpha,N)$ which starts with the sequence $(L_1,L_2,L_3)$.
\end{theorem}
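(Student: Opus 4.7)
The plan is to split the theorem into the three necessity inequalities and the converse existence statement, handling the former with Fillmore's Theorem \ref{fill} together with the spectral description of two-projection sums in Lemma \ref{sum2}, and the latter by an explicit projection construction whose feasibility conditions coincide with \eqref{re1}--\eqref{re3}.

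For the two easier necessity statements, I would argue as follows. For \eqref{re1}, the operator $S:=\alpha\mathbf I-P_1=\sum_{i=2}^K P_i$ has spectrum $\{\alpha^{(N-L_1)},(\alpha-1)^{(L_1)}\}$; since $1<\alpha<2$ both eigenvalues are strictly positive, so $\operatorname{rank}(S)=N$, and Fillmore's inequality $\operatorname{trace}(S)\ge \operatorname{rank}(S)$ gives $\alpha N-L_1\ge N$. For \eqref{re2}, since $P_1+P_2 \le \alpha\mathbf I<2\mathbf I$ the multiplicity $m(2)$ in Lemma \ref{sum2} vanishes, and part (v) of the lemma then forces $m(0)=N-L_1-L_2\ge 0$.

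The main obstacle is \eqref{re3}. My approach is to let $\sigma:=P_1+P_2+P_3$ and to study the multiplicity $m(\alpha)$ of $\alpha$ as an eigenvalue of $\sigma$. Because the $\alpha$-eigenspace $V_\alpha$ of $\sigma$ coincides with $\bigcap_{i\ge 4}\ker P_i$, Fillmore's theorem applied to $\alpha\mathbf I-\sigma=\sum_{i\ge 4}P_i$ produces the relation $L_1+L_2+L_3 \le (\alpha-1)N + m(\alpha)$, so the question reduces to an upper bound on $m(\alpha)$. On $V_\alpha$ the three compressions $P_i|_{V_\alpha}$ are PSD contractions of rank at most $\min(L_i,m(\alpha))$ summing to $\alpha\mathbf I_{m(\alpha)}$; combining the resulting trace inequalities with the pairing $m_{12}(\lambda)=m_{12}(2-\lambda)$ for $P_1+P_2$ from Lemma \ref{sum2}(iv) and the Weyl bound $\lambda_k(\sigma)\le\lambda_k(P_1+P_2)+1$ should yield $L_1+L_2+L_3 \le N$ in the regime $\alpha<3/2$. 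The regime $\alpha>3/2$ is handled by applying the same analysis to the complementary sum $3\mathbf I-\sigma=(\mathbf I-P_1)+(\mathbf I-P_2)+(\mathbf I-P_3)$, producing the weaker bound $L_1+L_2+L_3 \le 2(\alpha-1)N$.

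For the converse, I would canonically extend $(L_1,L_2,L_3)$ by $K-3=\alpha N-L_1-L_2-L_3$ trailing ones and realize $(L_1,L_2,L_3,1,\ldots,1)$ as a TFF sequence. The construction has two steps. First, construct $P_1,P_2,P_3$ so that $T:=\alpha\mathbf I-\sigma$ is PSD with spectrum majorizing the all-ones vector of length $K-3$. When $L_1+L_2+L_3\le N$, taking the ranges of $P_1,P_2,P_3$ mutually orthogonal gives $T$ the explicit spectrum $\{\alpha^{(N-L_1-L_2-L_3)},(\alpha-1)^{(L_1+L_2+L_3)}\}$, and the majorization follows immediately from \eqref{re1}. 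When $L_1+L_2+L_3>N$, which only occurs for $\alpha>3/2$, the projections must overlap so that $\sigma$ has eigenvalue $\alpha$ with multiplicity exactly $L_1+L_2+L_3-(\alpha-1)N$; this geometric configuration is produced by an iterated use of the converse direction of Lemma \ref{sum2} or by a direct Gram-matrix construction. Second, invoke the Schur--Horn theorem for unit-norm tight frames to express $T$ as a sum of $K-3$ rank-one projections. The refined spectral constraints on three projections form the hardest step in both directions, and the fully uniform treatment, as the introduction signals, is deferred to Section \ref{S5} via the combinatorics of Schur functions.
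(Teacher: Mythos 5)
Your treatment of \eqref{re1} and \eqref{re2} coincides with the paper's (Fillmore applied to $\alpha\mathbf I-P_1$; vanishing of $m(2)$ plus Lemma \ref{sum2}(v)), and your reduction $L_1+L_2+L_3\le(\alpha-1)N+m(\alpha)$ is correct. Beyond that point, however, the proposal has two genuine failures.

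First, the converse construction is wrong in its main case. If the ranges of $P_1,P_2,P_3$ are mutually orthogonal, then $\sigma=P_1+P_2+P_3$ is itself a projection, so $T=\alpha\mathbf I-\sigma$ has eigenvalues $\alpha$ and $\alpha-1$, both strictly positive, hence $\operatorname{rank}(T)=N$; the Fillmore/Schur--Horn condition $\operatorname{trace}(T)\ge\operatorname{rank}(T)$ then forces $L_1+L_2+L_3\le(\alpha-1)N$, which is strictly stronger than \eqref{re1}--\eqref{re3}. Concretely, take $\alpha=4/3$, $N=6$, $(L_1,L_2,L_3)=(2,2,2)$: all three hypotheses hold (and indeed $(2,2,2,2)\in\TFF(4/3,6)$), but your $T=\tfrac13\mathbf I$ on $\RR^6$ has rank $6$ and trace $2$, so it cannot be a sum of $K-3=2$ rank-one projections; the majorization you claim ``follows immediately from \eqref{re1}'' fails already at $k=1$. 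The essential point, which is the heart of the paper's construction, is that $P_1,P_2,P_3$ must be made to \emph{overlap} so that $\sigma$ acquires eigenvalue $\alpha$ with multiplicity $L_2+L_3$ (built from the converse of Lemma \ref{sum2} plus a honeycomb step); this drops $\operatorname{rank}(T)$ to $N-L_2-L_3$, and only then does Fillmore's inequality become exactly \eqref{re1}. Your fallback for $L_1+L_2+L_3>N$ (``iterated use of Lemma \ref{sum2} or a direct Gram-matrix construction'') simply asserts the existence statement that needs proof; the paper obtains it in Section \ref{S5} by exhibiting Littlewood--Richardson tableaux.

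Second, the necessity of \eqref{re3} is never actually derived. For $\alpha<3/2$ you list ingredients and say they ``should yield'' the bound, but the estimates those ingredients give do not: e.g.\ Weyl's inequality forces $P_1+P_2$ to have at least $m(\alpha)-L_3$ eigenvalues equal to $\alpha$, and the pairing of Lemma \ref{sum2}(iv) then gives $m(\alpha)\le L_3+(L_1+L_2)/2$, which combined with your Fillmore relation yields only $L_1+L_2\le 2(\alpha-1)N$ --- a consequence of \eqref{re1}, not $L_1+L_2+L_3\le N$. The paper's argument is different and direct: all $L_1+L_2$ nonzero eigenvalues of $P_1+P_2$ lie in $[2-\alpha,\alpha]$, so if $L_1+L_2+L_3>N$ the range of $P_3$ meets their span and $\sigma$ has a Rayleigh quotient at least $(2-\alpha)+1>\alpha$. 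For $\alpha>3/2$, your plan to run ``the same analysis'' on $3\mathbf I-\sigma$ is an unsupported assertion, and this is precisely the case the paper states is \emph{not} accessible by the operator-theoretic methods of Section \ref{S4}; the paper proves it, together with the matching converse, combinatorially in Section \ref{S5} (Theorem \ref{th:hooks}). If you want to stay operator-theoretic there, the workable route is Naimark duality (Theorem \ref{du2}): in $\TFF(\alpha/(\alpha-1),(\alpha-1)N)$ the frame bound lies in $(2,3)$, and the triple intersection of the three ranges, of dimension at least $L_1+L_2+L_3-2(\alpha-1)N$, would otherwise carry eigenvalue $3$.
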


\begin{proof}
Suppose $\alpha \mathbf I$ is written as in \eqref{re0}. Then, $S=\alpha \mathbf I - P_1$ is an operator with 2 eigenvalues: $\alpha$ with multiplicity $N-L_1$ and $(\alpha-1)$ with multiplicity $L_1$. By Theorem \ref{fill} we must have that
\[
\alpha N - L_1 \ge N.
\]
Solving this for $L_1$ yields \eqref{re1}.

By Lemma \ref{sum2} the sum $P_1+P_2$ has eigenvalue $1$ with multiplicity at least $L_1-L_2$. Moreover, all other positive eigenvalues of this sum must come in pairs $(2-\lambda,\lambda)$, where $1 \le \lambda \le \alpha<2$. Thus, by Lemma \ref{sum2}(v), $L_1+L_2 \le N$. Let $S= \alpha \mathbf I - P_1 - P_2$.
By Theorem \ref{fill}, $S$ must satisfy \eqref{fill1}. Note that the trace of $S$ remains constant regardless of choices of $P_1$ and $P_2$,
\[
\operatorname{trace}(S)=\alpha N - L_1 - L_2.
\]
Thus, the rank of $S$ must be minimized to guarantee that it can be written as a sum of projections. The minimal rank of $S$ occurs if $P_1+P_2$ has eigenvalue $\alpha$ with multiplicity $L_2$, and thus eigenvalue $2-\alpha$ with the same multiplicity. Then, the rank of the corresponding $S$ is $N-L_2$. Thus, we have
\[
\alpha N - L_1 - L_2 \ge N-L_2.
\]
This leads again to \eqref{re1}. Thus, Fillmore's theorem does not introduce new constraints in this case. In other words, \eqref{re1} and \eqref{re2} are both necessary and sufficient conditions for the existence of an element of $\TFF(\alpha,N)$ starting with $(L_1,L_2)$.

Suppose next that $1<\alpha<3/2$. Repeating the above arguments, by Lemma \ref{sum2}, $P_1+P_2$ must have all of its $L_1+L_2$ non-zero eigenvalues (counted with multiplicities) in the interval $[2-\alpha,\alpha]$. Thus, if $L_1+L_2+L_3>N$, then at least one eigenvalue of $P_1+P_2+P_3$ would be at least $(2-\alpha)+1>3/2>\alpha$, which is impossible. Thus, \eqref{re3} is necessary.

To prove the converse, assume that $L_1+L_2+L_3 \le N$. Using honeycomb models as in the proof of  Lemma \ref{sum2} one can show that there exist projections $P_i$ such that their sum $P_1+P_2+P_3$ has the eigenvalue $\alpha$ with multiplicity $L_2+L_3$, and no eigenvalues bigger than $\alpha$.
This is shown in a two step process. First, we construct $P_2$ and $P_3$ such that their sum has eigenvalues: $\alpha$ and $2-\alpha$ both with multiplicities $L_3$ and $1$ with multiplicity $L_2-L_3$. Then, using a honeycomb model we can add on another projection $P_1$, such that $P_1+P_2+P_3$ has eigenvalue $\alpha$ with multiplicity $L_2+L_3$. This leads to an operator $S=\alpha \mathbf I - (P_1 + P_2 +P_3)$ with the  rank $N-L_2-L_3$.
The trace of $S$ remains constant regardless of the choice of such projections,
\[
\operatorname{trace}(S)=\alpha N - L_1 - L_2-L_3.
\]
Since $L_1 \le (\alpha-1)N$, Fillmore's Theorem \ref{fill} can be applied to represent $S$ as a sum of projections. This proves that \eqref{re1}--\eqref{re3} are both necessary and sufficient conditions for the first 3 ranks of a TFF sequence in the case $1<\alpha<3/2$. Unfortunately, the case $3/2<\alpha<2$ does not seem to be easily approachable with the techniques of this section. Instead, in Section \ref{S5} we shall give another combinatorial proof of Theorem \ref{re} which works in the entire range $1<\alpha<2$.
\end{proof}

We end this section by an explicit characterization of TFF sequences for some special values $\alpha$.

\begin{theorem}\label{rex} The set $\TFF(\alpha, N)$ has exactly one maximal element $\mathbf L$ with respect to majorization relation $\preccurlyeq$ in the following four cases indexed by $n \in \NN$:
\begin{align}
\label{rex1}
\alpha&=n,  \qquad
\mathbf L = (\underbrace{N,N,\ldots,N}_n),
\\
\label{rex2}
\alpha &= 1+\frac{1}{n}, \  n |N, \qquad \mathbf L = \bigg( \underbrace{\frac Nn,\frac Nn,\ldots, \frac Nn}_{n+1}\bigg),
\\
\label{rex3}
\alpha &= n+ \frac12, \ 2|N, \qquad \mathbf L = \bigg(\underbrace{N,\ldots,N}_{n-1},\frac N2, \frac N2, \frac N2\bigg),
\end{align}
\begin{equation}
\label{rex4}
\alpha = 1+\frac{2}{2n-1}, \ (2n-1)|N, \ \mathbf L = \bigg(\underbrace{\frac{2N}{2n-1},\ldots,\frac{2N}{2n-1}}_{n-1},\frac{N}{2n-1},\frac{N}{2n-1},\frac{N}{2n-1}\bigg).
\end{equation}
\end{theorem}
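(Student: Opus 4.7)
The plan is to prove cases (1) and (3) directly; cases (2) and (4) will follow by Naimark duality. Indeed, by Corollary \ref{du3} the identity $\TFF(\alpha, N) = \TFF(\alpha/(\alpha-1), (\alpha-1)N)$ preserves sequences (hence the majorization order), and under this identification the target of case (2) transfers to that of case (1) (with $n$ shifted by one), while case (4) transfers to case (3) (with the same $n$), so the unique-maximal conclusion propagates.

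Case (1) is immediate: $n$ copies of $\mathbf I$ exhibit $(N,N,\ldots,N)$ (length $n$) in $\TFF(n, N)$, and for any $\mathbf{L} \in \TFF(n, N)$ the bounds $L_i \le N$ give $\sum_{i=1}^k L_i \le kN$, so $\mathbf{L} \preccurlyeq (N,\ldots,N)$.

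For case (3), existence of the target $\mathbf{L}^*=(N^{n-1}, N/2, N/2, N/2)$ in $\TFF(n+1/2, N)$ reduces to the existence of $(N/2, N/2, N/2)$ in $\TFF(3/2, N)$, which is obtained as the Naimark complement (Theorem \ref{du2}) of three copies of $\mathbf I$ in $\TFF(3, N/2)$; appending $n - 1$ identity projections handles $n \ge 2$.

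For uniqueness, given $\mathbf{L} \in \TFF(n+1/2, N)$, I would verify $\mathbf{L} \preccurlyeq \mathbf{L}^*$ by checking the partial-sum inequalities. Three are routine: $\sum_{i=1}^k L_i \le kN$ for $k \le n-1$ from $L_i \le N$; $\sum_{i=1}^n L_i \le (n - 1/2)N$ by Fillmore's theorem applied to the sum of projections $\alpha \mathbf I - S_n = \sum_{i > n} P_i$, which has full rank since $n < \alpha$; and $\sum_{i=1}^k L_i \le \alpha N$ for $k \ge n+2$ trivially. The main obstacle is the bound $\sum_{i=1}^{n+1} L_i \le nN$, since direct Fillmore on $\mathbf{L}$ at $k = n+1$ is too weak (the multiplicity of $\alpha$ as an eigenvalue of $S_{n+1}$ may be positive). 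I would resolve this by applying Fillmore instead to the spatial dual $\mathbf{L}^c \in \TFF(K - n - 1/2, N)$ from Theorem \ref{du1} at index $K - n - 1$; one first strips any zero entries corresponding to $L_i = N$, but the $k = n$ bound above forces at most $n - 1$ such entries, so the reduced dual sequence is long enough. Unpacking the resulting Fillmore inequality yields $\sum_{i=n+2}^K L_i \ge N/2$, equivalently $\sum_{i=1}^{n+1} L_i \le (n+1/2)N - N/2 = nN$, as required.
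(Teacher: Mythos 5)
Your proposal is correct, and its skeleton matches the paper's: cases \eqref{rex2} and \eqref{rex4} via Corollary \ref{du3} (which literally identifies the posets, so unique maximality transfers), case \eqref{rex1} directly, and case \eqref{rex3} by verifying the partial-sum inequalities against $\mathbf L^*=(N^{n-1},N/2,N/2,N/2)$, with the steps $k\le n-1$ (from $L_i\le N$), $k=n$ (Fillmore applied to $\sum_{i>n}P_i=\alpha\mathbf I-\sum_{i\le n}P_i$, full rank since $n<\alpha$), and $k\ge n+2$ identical to the paper's. The one genuine divergence is the key bound $\sum_{i=1}^{n+1}L_i\le nN$: the paper argues geometrically that if it failed then $\dim\bigl(\bigcap_{i=1}^{n+1}W_i\bigr)\ge\sum_{i=1}^{n+1}L_i-nN>0$, so $P_1+\cdots+P_{n+1}$ would have eigenvalue $n+1>\alpha$, a contradiction; you instead run Fillmore on the spatial dual at index $K-n-1$, which, unpacked, is Fillmore applied to $\sum_{j=1}^{n+1}(\mathbf I-P_j)=(n+1)\mathbf I-\sum_{j=1}^{n+1}P_j\ge\tfrac12\mathbf I$, a full-rank sum of projections, whence $(n+1)N-\sum_{j=1}^{n+1}L_j\ge N$. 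Both arguments are valid and rest on the same operator inequality $\sum_{j=1}^{n+1}P_j\le\alpha\mathbf I<(n+1)\mathbf I$; yours has the merit of making $k=n$ and $k=n+1$ two instances of one Fillmore mechanism (and it also shows $K\ge n+2$, since $K=n+1$ would force $(n+1/2)N\le nN$), while the paper's intersection argument avoids duality entirely. One simplification: your stripping of zero entries from the dual is harmless but unnecessary — the zero operator is a projection and Fillmore's inequality $\operatorname{trace}\ge\operatorname{rank}$ applies to any sum of projections; equivalently, you can phrase the step directly in terms of the complements $\mathbf I-P_j$ without invoking Theorem \ref{du1} at all.
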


\begin{proof}
The case \eqref{rex1} is the easiest and it follows immediately from Theorem \ref{major}. The case \eqref{rex2} is obtained by the duality argument. Indeed, note that if $\alpha =1 +1/n$, then $n$ must divide $N$. Then, by Corollary \ref{du3}, $\TFF(\alpha, N)=\TFF(\tilde \alpha, \tilde N)$, where $\tilde \alpha =\alpha/(\alpha-1)=n+1$ and $\tilde N=(\alpha-1)N=N/n$.

In particular, we have that $\TFF(3/2,N)=\TFF(3,N/2)$ has a unique maximal element $(N/2,N/2,N/2)$. By appending $(n-1)$ $N$'s in the front of this sequence we obtain a maximal element of $\TFF(n+1/2,N)$. It remains to show that this is the only maximal element.

Suppose that we have another element $(L_1, \ldots, L_K)\in \TFF(n+1/2,N) $. Let $P_i$'s be the corresponding projections. Given two hermitian matrices $S$ and $T$ we write $S \le T$ if $\lan Sx,x \ran \le \lan Tx,x\ran$ for all $x\in\RR^N$.
Since $\sum_{i=1}^n P_i \le n \mathbf I$, $S=\sum_{i=n+1}^K P_i$ must have full rank $N$. By Fillmore's Theorem \ref{fill}, this implies that
\[
\operatorname{trace}(S) = \sum_{i=n+1}^K L_i \ge N.
\]
Thus, $L_1+ \ldots +L_n \le (n-1/2)N$.

Suppose on the contrary that $L_1+\ldots + L_{n+1} > nN$. Let $W_i$'s be the corresponding subspaces with $\dim W_i=L_i$. By basic linear algebra the intersection satisfies
\[
\dim \bigg(\bigcap_{i=1}^{n+1} W_i \bigg) = L_1+\ldots + L_{n+1} - nN> 0.
\]
This implies that $P_1 + \ldots + P_{n+1}$ has eigenvalue $n+1$ exceeding $\alpha=n+1/2$, which is a contradiction. Thus, we have necessarily that $L_1+\ldots + L_{n+1} \le nN$. Clearly,
\[
L_1+\ldots + L_{n+2} \le L_1+ \ldots + L_K =(n+1/2)N.
\]
Consequently, $(L_1, \ldots, L_K) \preccurlyeq \mathbf L$ proving \eqref{rex3}.

Finally, \eqref{rex4} is shown by the duality argument. Indeed, note that if $\alpha =1 +2/(2n-1)$, then $2n-1$ must divide $N$. Then, by Corollary \ref{du3}, $\TFF(\alpha, N)=\TFF(\tilde \alpha, \tilde N)$, where $\tilde \alpha =\alpha/(\alpha-1)=n+1/2$ and $\tilde N=(\alpha-1)N=2N/(2n-1)$.
\end{proof}

Section \ref{S7} provides the list of all maximal elements in $\TFF(\alpha,N)$ for all $\alpha\le 2$ and dimensions $N\le 9$. It is easy to observe that all unique maximal elements in our tables are covered by Theorem \ref{rex}. Hence, it is very tempting to conjecture that for general $\alpha$ and $N$, if $\TFF(\alpha,N)$ has only one maximal element, then $\alpha$ must necessarily come from  one of the four cases of Theorem \ref{rex}.


\section{A combinatorial characterization of tight fusion frames} \label{S3}

In this section we give a combinatorial characterization of tight fusion frames in the context of
Schur functions.  The main result of this section, Theorem \ref{th:combchar}, is a direct
consequence of Horn's recursion for the hermitian eigenvalue problem (for a survey of this problem
see \cite{Fu00}).  For completeness, we state the main results of this body of work.  For any partition $$\lambda=(\lambda_1\geq\lambda_2\geq\cdots\geq \lambda_d>0),$$ let
$$|\lambda|=\sum_{i=1}^d \lambda_i$$ denote the size of $\lambda$ and let $d$ denote the length.  We say $\lambda$ is a rectangular partition if $\lambda=(a^b):=\underbrace{(a,\ldots,a)}_b$ for some positive integers $a,b.$  For any partition $\lambda$, let $s_{\lambda}$ denote the corresponding Schur polynomial.  The polynomial $s_{\lambda}$ is a homogeneous polynomial of degree $|\lambda|.$  It is well known that the Schur polynomials form a linear basis of the ring of symmetric polynomials with integer coefficients.  Hence for any collection of partitions $\lambda^1,\ldots, \lambda^K$ we can define the corresponding \emph{Littlewood-Richardson coefficients}
$c(\lambda^1,\ldots,\lambda^K; \mu)$ as the product structure constants of

$$\prod_{i=1}^K s_{\lambda^i}=\sum_{\mu}  c(\lambda^1,\ldots,\lambda^K; \mu)\, s_{\mu}.$$

The Littlewood-Richardson coefficients defined above play an important role in the hermitian
eigenvalue problem.  To state these results, we first need some notation. There is a standard
identification between sets of positive integers of size $r$ and partitions of length at most $r$.
For any set $I=\{i_1<i_2<\cdots<i_r\},$ define the partition
$$\lambda(I):=(i_r-r,i_{r-1}-r+1,\ldots,i_1-1).$$

Let $(\beta^1,\ldots,\beta^{K+1})\in (\RR^{N})^{K+1}$ denote a collection of sequences where each
$\beta^i:=(\beta^i_1\geq\cdots \geq\beta^i_N)$.  The goal of the hermitian eigenvalue problem is to
determine for which sequences $(\beta^1,\ldots,\beta^{K+1})$ do there exist $N\times N$ hermitian
matrices $H_1,\ldots,H_{K+1}$ such that the eigenvalues of $H_i$ are given by the sequence
$\beta^i$ and
$$\sum_{i=1}^K H_i=H_{K+1}.$$

The following theorem, proved by Klyachko in \cite{Kly98}, gives a remarkable characterization in
terms of collection of a inequalities parametrized by non-zero Littlewood-Richardson coefficients.

\begin{theorem}\label{th:klyachko}Let $(\beta^1,\ldots,\beta^{K+1})\in (\RR^{N})^{K+1}$ be a collection of sequences of non-increasing real numbers such that
$$\sum_{i=1}^K\sum_{j=1}^N\beta^i_j=\sum_{j'=1}^N\beta^{K+1}_{j'}.$$
Then the following are equivalent:
\begin{enumerate}
\item There exist $N\times N$ hermitian matrices $H_1,\ldots,H_{K+1}$ with spectra $(\beta^1,\ldots,\beta^{K+1})$ such that $$\sum_{i=1}^K H_i=H_{K+1}.$$

\item For every $r<N$, the sequence $(\beta^1,\ldots,\beta^{K+1})$ satisfies the inequality
\begin{equation}\label{eq:Horn}\sum_{i=1}^K\sum_{j\in I^j}\beta^i_j\geq \sum_{j'\in I^{K+1}}\beta^{K+1}_{j'}\end{equation}
for every collection of subsets $I^1,\ldots,I^{K+1}$ of size $r$ of the integers $\{1,2,\ldots, N\}$ where the Littlewood-Richardson coefficient $$c(\lambda(I^1),\ldots,\lambda(I^K);\lambda(I^{K+1}))\neq 0.$$
\end{enumerate}\end{theorem}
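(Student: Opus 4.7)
The plan is to prove the two implications separately. The direction $(1) \Rightarrow (2)$ is a classical Schubert-calculus argument on the Grassmannian, while the deeper direction $(2) \Rightarrow (1)$ is Klyachko's theorem proper, where the real work lies. I would also, before starting, absorb the constant $(1/N)\sum \beta^i_j$ into a scalar shift so that each $H_i$ may be assumed traceless if convenient; this does not affect the Horn inequalities.

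For necessity, fix $r < N$ and subsets $I^1, \ldots, I^{K+1}$ of $\{1, \ldots, N\}$ of size $r$ with $c(\lambda(I^1), \ldots, \lambda(I^K); \lambda(I^{K+1})) \neq 0$. Diagonalize each matrix as $H_i = U_i D_i U_i^{*}$, where $D_i$ has the entries of $\beta^i$ in weakly decreasing order along the diagonal, and let $F^i$ denote the complete flag in $\CC^N$ formed by the spans of the leading columns of $U_i$. The non-vanishing of the Littlewood--Richardson coefficient is, by the classical Schubert dictionary, equivalent to the non-emptiness of the intersection $X_{\lambda(I^1)}(F^1) \cap \cdots \cap X_{\lambda(I^K)}(F^K) \cap X_{\lambda(I^{K+1})^{\vee}}(F^{K+1})$ in the Grassmannian $\Gr(r,N)$ for generic positions of the flags. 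Picking an $r$-dimensional subspace $V$ in this intersection and letting $P_V$ denote the orthogonal projection onto $V$, the Schubert position with respect to $F^i$ yields the elementary trace estimate $\operatorname{tr}(P_V H_i P_V) \geq \sum_{j \in I^i} \beta^i_j$ for $i = 1, \ldots, K$, while the dual condition against $F^{K+1}$ gives $\operatorname{tr}(P_V H_{K+1} P_V) \leq \sum_{j' \in I^{K+1}} \beta^{K+1}_{j'}$. Summing the first $K$ inequalities and using $\sum_{i=1}^{K} H_i = H_{K+1}$ yields the desired Horn inequality on $V$.

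For sufficiency, which is the genuinely hard direction, I would invoke one of two established strategies. In Klyachko's original approach, the admissible spectra $(\beta^1, \ldots, \beta^{K+1})$ form the image of a moment map on a product of coadjoint orbits of $U(N)$; the Hilbert--Mumford criterion applied to a torus action on $\Gr(r,N)^{K+1}$ translates the semistability condition into exactly the stated inequalities, with Horn's recursion providing the inductive bookkeeping on $r$. In the Knutson--Tao approach, one shows instead that a tuple of boundary vectors satisfying the inequalities supports an honest honeycomb, using the saturation theorem to promote a rational realization to an integer one and thence to a triple (or $(K{+}1)$-tuple) of hermitian matrices. The main obstacle is precisely this sufficiency half: necessity is a one-line consequence of picking a common Schubert-favoured subspace, but showing that the Horn inequalities cut out the entire eigencone, rather than merely containing it, requires either the nontrivial stability analysis of geometric invariant theory or the combinatorial saturation theorem for Littlewood--Richardson coefficients. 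In either case the inductive scheme on $r$ cannot be avoided, which is the structural reason the statement has the recursive form it does.
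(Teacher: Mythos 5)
First, a point of context: the paper offers no proof of this statement at all --- it is quoted as Klyachko's theorem and cited to \cite{Kly98}, with the identification of Klyachko's list of inequalities with Horn's recursive list attributed to the saturation theorem of Knutson and Tao \cite{KT99}. So your treatment of the hard implication $(2)\Rightarrow(1)$, which defers to the moment-map/GIT argument or to honeycombs plus saturation, is on exactly the same footing as the paper itself, and your outline of that half correctly locates the difficulty.

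Your necessity argument $(1)\Rightarrow(2)$, however, contains a genuine error, and it is not a matter of Schubert-variety conventions. You arrange for $V$ to satisfy $\operatorname{trace}(P_VH_iP_V)\ge\sum_{j\in I^i}\beta^i_j$ for $i=1,\ldots,K$ and $\operatorname{trace}(P_VH_{K+1}P_V)\le\sum_{j'\in I^{K+1}}\beta^{K+1}_{j'}$. Summing the first $K$ bounds and using $\sum_{i=1}^KH_i=H_{K+1}$ gives
\[
\sum_{j'\in I^{K+1}}\beta^{K+1}_{j'}\;\ge\;\operatorname{trace}(P_VH_{K+1}P_V)\;=\;\sum_{i=1}^K\operatorname{trace}(P_VH_iP_V)\;\ge\;\sum_{i=1}^K\sum_{j\in I^i}\beta^i_j,
\]
which is the \emph{reverse} of \eqref{eq:Horn}: lower bounds on the summands together with an upper bound on the sum can never produce the stated Horn inequality. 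The Hersch--Zwahlen trace estimates must be assigned the other way around. For the summands $H_1,\ldots,H_K$ one needs the \emph{upper} bounds $\operatorname{trace}(P_VH_iP_V)\le\sum_{j\in I^i}\beta^i_j$, which hold when $V$ lies in the Schubert varieties $X_{\lambda(I^i)}$ taken relative to the flags built from the \emph{smallest} eigenvalues of $H_i$ (not the leading columns of $U_i$ as you specify), while the \emph{lower} bound $\operatorname{trace}(P_VH_{K+1}P_V)\ge\sum_{j'\in I^{K+1}}\beta^{K+1}_{j'}$ goes with $X_{\lambda(I^{K+1})^\vee}$ relative to the largest-eigenvalue flag of $H_{K+1}$. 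With that assignment the product of the cohomology classes is $\sigma_{\lambda(I^1)}\cdots\sigma_{\lambda(I^K)}\cdot\sigma_{\lambda(I^{K+1})^\vee}$, whose integral is precisely $c(\lambda(I^1),\ldots,\lambda(I^K);\lambda(I^{K+1}))$, and the chain of inequalities closes in the correct direction. A second, smaller gap: you invoke non-emptiness of the Schubert intersection only for \emph{generic} flags, but the argument requires it for the specific eigenflags at hand; this needs the standard observation that a non-zero product of classes of \emph{closed} Schubert varieties forces non-empty intersection for arbitrary flags (compactness of $\Gr(r,N)$ together with Kleiman transversality on a dense open set of flag tuples).
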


The inequalities given in \eqref{eq:Horn} are called Horn's inequalities and were initially defined
in a very different way by Horn in \cite{Ho62}.  While Horn's list of inequalities in \cite{Ho62}
are, a priori, different than Klyachko's list \eqref{eq:Horn}, they were shown to be equivalent as a
consequence of the saturation theorem of Knutson and Tao in \cite{KT99}.  What is amazing about
this equivalence is that Horn's initial definition of the inequalities \eqref{eq:Horn} uses a
recursion unrelated to Littlewood-Richardson coefficients.  Horn's recursion in light of Theorem
\ref{th:klyachko} can be stated as follows:

\begin{theorem}\label{th:hornrec}Let $I^1,\ldots,I^{K+1}$ be subsets of size $r$ of the integers $\{1,2,\ldots, N\}$ such that
\begin{equation}\label{eq:necess}\sum_{i=1}^K\sum_{j=1}^r\lambda(I^i)_j=\sum_{j'=1}^r\lambda(I^{K+1})_{j'}.\end{equation} The following are equivalent:
\begin{enumerate}
\item The Littlewood-Richardson coefficient $$c(\lambda(I^1),\ldots,\lambda(I^K);\lambda(I^{K+1}))\neq 0.$$

\item There exist $r\times r$ hermitian matrices  $H_1,\ldots,H_{K+1}$ with spectra $(\lambda(I^1),\ldots, \linebreak
\lambda(I^{K+1}))$ such that \begin{equation}\label{eq:matrixsum}\sum_{i=1}^K H_i=H_{K+1}.\end{equation} \end{enumerate}
\end{theorem}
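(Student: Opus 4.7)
The statement is Horn's recursion, identifying the support of the Littlewood--Richardson coefficient with the image of the hermitian sum problem at dimension $r$. My plan is to obtain the equivalence as an immediate combination of Theorem \ref{th:klyachko} applied at dimension $r$ with the saturation theorem of Knutson and Tao.

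For the direction $(2) \Rightarrow (1)$, I would invoke the Knutson--Tao saturation theorem, which asserts that the Littlewood--Richardson coefficient $c(\lambda^1,\ldots,\lambda^K;\mu)$ is non-zero precisely when the tuple $(\lambda^1,\ldots,\lambda^K,\mu)$ lies in the hermitian eigenvalue cone $\Gamma(r)$, i.e., is realizable as spectra of $r \times r$ hermitian matrices satisfying $\sum H_i = H_{K+1}$. Since (2) supplies exactly such a realization for the partitions $\lambda(I^i)$, saturation immediately yields the non-vanishing of $c(\lambda(I^1),\ldots,\lambda(I^K);\lambda(I^{K+1}))$.

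For $(1) \Rightarrow (2)$, I would read this same equivalence in the opposite direction: the non-vanishing of $c(\lambda(I^1),\ldots,\lambda(I^K);\lambda(I^{K+1}))$ places the tuple of partitions inside $\Gamma(r)$. Then Theorem \ref{th:klyachko} applied with $N$ replaced by $r$, together with the trace hypothesis \eqref{eq:necess} (which plays the role of the trace equality in Klyachko's theorem), furnishes the required hermitian matrices $H_1,\ldots,H_{K+1}$ with spectra $(\lambda(I^1),\ldots,\lambda(I^{K+1}))$ satisfying \eqref{eq:matrixsum}.

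The main obstacle is the appeal to saturation itself, a deep theorem whose proof relies on the honeycomb model of Knutson and Tao. Without it, the forward direction would be genuinely subtle: Klyachko's theorem reduces matrix existence at dimension $r$ to an infinite family of inequalities indexed by non-vanishing LR coefficients at smaller dimensions $r' < r$, so one would otherwise have to verify these inequalities inductively from $c(\lambda(I^1),\ldots,\lambda(I^K);\lambda(I^{K+1})) \neq 0$. With saturation available as a black box, however, Theorem \ref{th:hornrec} reduces to a concise two-line combination with Theorem \ref{th:klyachko}, which is precisely what is needed for the subsequent application to the TFF setting in Theorem \ref{th:combchar}.
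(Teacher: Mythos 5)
Your proposal matches the paper's treatment exactly: the paper offers no independent proof of Theorem \ref{th:hornrec}, presenting it as a known result (Horn's recursion) whose identification with the Littlewood--Richardson formulation follows from Klyachko's Theorem \ref{th:klyachko} together with the Knutson--Tao saturation theorem, which is precisely the two-ingredient argument you give. The only caveat is that what you call ``saturation'' is really saturation proper (namely, $c(N\lambda^1,\ldots,N\lambda^K;N\mu)\neq 0$ for some integer $N\geq 1$ implies $c(\lambda^1,\ldots,\lambda^K;\mu)\neq 0$) combined with Klyachko's theorem to identify the eigenvalue cone with the LR support, but this packaging is standard and is how the paper itself cites the result.
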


The recursion says that a collection of subsets $I^1,\ldots,I^{K+1}$ corresponds to a Horn
inequality if and only if the corresponding collection of  partitions are eigenvalues of some $r\times r$
hermitian matrices which satisfy \eqref{eq:matrixsum}.  Hence Horn's inequalities can be defined recursively by induction on $N$.  We
also remark that equation \eqref{eq:necess} is a necessary condition for the corresponding
Littlewood-Richardson coefficient to be nonzero.

\smallskip

We now apply Theorem \ref{th:hornrec} to the case of tight fusion frames.  Suppose that $(L_1\geq
L_2\geq\cdots \geq L_K)\in \TFF(\alpha,N)$ and that $M:=\sum_{i=1}^K L_i.$  Then there exist
orthogonal projections $P_1,\ldots, P_K$ such that

\begin{equation}\label{eq:1} \sum_{i=1}^K NP_i=M\mathbf I.\end{equation}

Since $P_i$ is an orthogonal projection, the spectra of the hermitian matrix $NP_i$ is given by
$$(\underbrace{N,\ldots,N}_{L_i},\underbrace{0,\ldots,0}_{N-L_i}).$$
Let $(N^{L_i})$ denote the corresponding rectangular partition to the spectra above.  The following
is a direct corollary of Theorem \ref{th:hornrec}.

\begin{theorem}\label{th:combchar}Fix an integer $N$ and let $(L_1\geq L_2\cdots \geq L_K)$ be a sequence of nonnegative integers such that $L_1\leq N.$  Let $M:=\sum_{i=1}^K L_i$ and $\alpha=M/N.$  The following are equivalent:
\begin{enumerate}
\item The sequence $(L_1\geq L_2\geq\cdots \geq L_K)\in \TFF(\alpha, N)$.

\item The Littlewood-Richardson coefficient $$c((N^{L_1}),\ldots,(N^{L_K});(M^N))\neq 0.$$
\end{enumerate}\end{theorem}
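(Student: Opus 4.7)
The plan is to recognize Theorem \ref{th:combchar} as essentially a translation of Theorem \ref{th:hornrec} into the TFF setting. First I would unwind the definitions: $(L_1,\ldots,L_K)\in\TFF(\alpha,N)$ is the assertion that there exist orthogonal projections $P_i$ of ranks $L_i$ with $\sum_{i=1}^K P_i=\alpha\mathbf I$. Since $\alpha=M/N$, multiplying through by $N$ gives the equivalent statement $\sum_{i=1}^K(NP_i)=M\mathbf I$, and each $H_i:=NP_i$ is an $N\times N$ Hermitian matrix whose spectrum is $(\underbrace{N,\ldots,N}_{L_i},\underbrace{0,\ldots,0}_{N-L_i})$, while $H_{K+1}:=M\mathbf I$ has spectrum $(M,M,\ldots,M)$ of length $N$. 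Thus the TFF condition is identical to the matrix identity \eqref{eq:matrixsum} with the prescribed spectra.

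Next I would translate these spectra into the subset language of Theorem \ref{th:hornrec} applied with $r=N$. The spectrum of $H_i$ corresponds to the rectangular partition $(N^{L_i})$ (viewed as an $N$-tuple by padding with zeros), and the spectrum of $H_{K+1}$ corresponds to $(M^N)$. Using the formula $\lambda(I)_k=i_{N-k+1}-(N-k+1)$, one checks that these partitions are realized by the explicit subsets of $\{1,\ldots,M+N\}$
\[
I^i=\{1,2,\ldots,N-L_i\}\cup\{2N-L_i+1,\ldots,2N\},\qquad I^{K+1}=\{M+1,M+2,\ldots,M+N\},
\]
so partitions of the required shape do arise as $\lambda(I)$. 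The trace compatibility \eqref{eq:necess} is automatic: $\sum_{i=1}^K|(N^{L_i})|=\sum_{i=1}^K NL_i=NM=|(M^N)|$.

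Applying Theorem \ref{th:hornrec} then gives the desired equivalence: the existence of Hermitian $H_1,\ldots,H_{K+1}$ with the specified spectra summing to $H_{K+1}$ is equivalent to the non-vanishing of $c((N^{L_1}),\ldots,(N^{L_K});(M^N))$. Combining this with the first paragraph closes the circle between (1) and (2). There is no real technical obstacle here; the argument is a dictionary translation, and the only care required is the (mechanical) verification that the rectangular partitions $(N^{L_i})$ and $(M^N)$ admit subset presentations, which is what justifies invoking the recursion.
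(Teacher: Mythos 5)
Your proof is correct and takes essentially the same approach as the paper: both reduce the statement to Theorem \ref{th:hornrec} by observing that $\sum_{i=1}^K P_i=\alpha\mathbf I$ with $\operatorname{rank}P_i=L_i$ is equivalent to the Hermitian identity $\sum_{i=1}^K(NP_i)=M\mathbf I$ with prescribed spectra $(N^{L_i})$ and $(M^N)$, and then invoke the equivalence with the non-vanishing of the Littlewood-Richardson coefficient. The only difference is that you spell out the mechanical verification that these rectangular partitions arise as $\lambda(I)$ for explicit subsets, a point the paper's proof leaves implicit.
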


\begin{proof}Assume part (1).  Then there exist orthogonal projections $P_1,\ldots, P_K$ with ranks $(L_1,\ldots, L_K)$ such that
\begin{equation}\label{eq:2}\sum_{i=1}^K P_i=\alpha\mathbf I.\end{equation}
Multiplying both sides of equation \eqref{eq:2} by $N$ gives equation \eqref{eq:1}.  Applying Theorem \ref{th:hornrec} gives part (2).

\smallskip

Conversely, if we assume part (2) then by Theorem \ref{th:hornrec}, there exists a collection of
$N\times N$ matrices which satisfy equation \eqref{eq:1} and have spectra
$(N^{L_1}),\ldots,(N^{L_K})$. Scaling by $1/N$ yields the desired tight fusion frame.
\end{proof}

\bigskip

The condition that $c((N^{L_1}),\ldots,(N^{L_K});(M^N))\neq 0$ can be made computationally explicit
by the following existence condition.  With the notation of Theorem \ref{th:combchar} we consider
the following properties for an $N\times M$ matrix $A=A[i,j]$.

\smallskip

\begin{itemize}
\item[(i)] (integral nonnegativity) $A[i,j]\in \ZZ_{\geq 0}$

\item[(ii)] (row sum) $\displaystyle\sum_{j=1}^M A[i,j]=M\quad \forall i$

\item[(iii)] (column sum) $\displaystyle\sum_{i=1}^N A[i,j]=N\quad \forall j$

\item[(iv)] (row sum dominance) $\displaystyle\sum_{j=1}^l (A[i,j]-A[i+1,j])\geq A[i+1,l+1]\quad \forall i,l$

\item[(v)] (column sum dominance) $\displaystyle \sum_{i=1}^l (A[i,j]-A[i,j+1])\geq A[l+1,j+1]\quad \forall j,l$
\end{itemize}

\smallskip

Observe that properties $(iv)$ and $(v)$ require dominance with one additional summand in the later
row or column. Also note that $(ii)$ and $(iii)$ are the only properties dependant on the size of
the matrix $A$.  Let $A$ be an $N\times M$ matrix and consider the sequence $(L_1,\ldots L_K)$. We
can partition $A$ into a sequence of column block matrices $$A=[A_1|A_2|\cdots| A_K]$$ where each
$A_i$ is the corresponding $N\times L_i$ sub-matrix of $A.$  We now have the following addition to
Theorem \ref{th:combchar}.

\begin{corollary}\label{th:combcharcor}Conditions (1) and (2) in Theorem \ref{th:combchar} are equivalent to the
following:
\begin{enumerate}
\item[(3)] There exists an $N\times M$ matrix $A$ which satisfies properties (i)-(iv) and whose column
block sub-matrices $A_1,\ldots, A_K$ each satisfy property (v). \end{enumerate}

Moreover, the coefficient $c((N^{L_1}),\ldots,(N^{L_K});(M^N))$ equals the number of $N\times M$ matrices $A$ which satisfy (3).
\end{corollary}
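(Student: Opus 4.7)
The plan is to establish the equivalence $(2)\Leftrightarrow (3)$ together with the enumerative identity by constructing an explicit bijection between $N\times M$ matrices $A$ satisfying (3) and iterated Littlewood--Richardson data computing $c((N^{L_1}),\ldots,(N^{L_K});(M^N))$; combined with Theorem \ref{th:combchar}, this establishes both assertions of the corollary.

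First, I would iterate the product rule $s_\mu s_\nu = \sum_\sigma c^\sigma_{\mu\nu} s_\sigma$ to obtain
\[
c((N^{L_1}),\ldots,(N^{L_K});(M^N)) \;=\; \sum_{\emptyset = \nu^0 \subseteq \cdots \subseteq \nu^K = (M^N)} \prod_{k=1}^K c^{\nu^k}_{\nu^{k-1},(N^{L_k})},
\]
where each factor on the right counts Littlewood--Richardson skew tableaux $T_k$ of shape $\nu^k/\nu^{k-1}$ with content $(N^{L_k})$. To such a chain with fillings $T_1,\ldots,T_K$ I would associate the matrix $A = [A_1|\cdots|A_K]$ with $A_k[i,j]$ equal to the number of cells of $T_k$ in row $i$ carrying the entry $j$.

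The translation of the properties would then proceed as follows. Property (i) is the non-negativity of the counts. Property (iii) encodes that each letter $j$ appears exactly $N$ times in $T_k$, which is the content condition $(N^{L_k})$. Property (ii) follows from the telescoping identity $\sum_{k=1}^K(\nu^k_i-\nu^{k-1}_i) = \nu^K_i = M$. Property (v) in block $A_k$ is the Yamanouchi condition for $T_k$: applying the lattice-word inequality to the reverse reading prefix consisting of rows $1,\ldots,l$ followed by the $(j+1)$'s of row $l+1$ (which are the first letters encountered when that row is scanned right to left) gives exactly $\sum_{i=1}^l A_k[i,j] \ge \sum_{i=1}^{l+1} A_k[i,j+1]$. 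Property (iv) encodes column strictness: the partial row sum $\sum_{j=1}^l A[i,j]$ is the $i$-th row length of the subpartition of cells whose entry-position does not exceed $l$, and the extra summand $A[i+1,l+1]$ reflects that any cell in row $i+1$ with the next entry must sit strictly below a previously placed cell in row $i$. This mechanism functions uniformly for $l$ interior to a block (enforcing column strictness within $T_k$) and for $l = L_1+\cdots+L_k$ at a block boundary (where it simultaneously enforces $\nu^k$ to be a partition and the first column of $T_{k+1}$ in row $i+1$ to fit beneath $\nu^k_i$).

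The inverse map reconstructs the intermediate partitions by partial row sums $\nu^k_i = \sum_{j=1}^{L_1+\cdots+L_k} A[i,j]$ and places the entries of each $T_k$ row by row in weakly increasing order; the Littlewood--Richardson axioms then follow from (i)--(v) by reversing the analysis above. The main obstacle lies in property (iv): it is a single global inequality that must simultaneously enforce column strictness interior to each block $A_k$ and the nesting conditions at the block boundaries, with the extra summand $A[i+1,l+1]$ playing structurally different roles in the interior and boundary regimes. Checking that no Littlewood--Richardson axiom is lost and no extraneous constraint is introduced in the passage between the two descriptions is the crux of the argument.
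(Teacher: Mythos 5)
Your proposal is correct and matches the paper's own argument: both establish the corollary via the same bijection, encoding a chain of Littlewood--Richardson skew tableaux $T_1,\ldots,T_K$ (with $\mu^k/\mu^{k-1}$ filled by content $(N^{L_k})$) as the matrix $A$ with $A_k[i,j]$ counting the $j$'s in row $i$ of $T_k$, with property (v) per block giving the lattice-word condition and the global property (iv) giving column strictness together with the nesting of the intermediate partitions. Your write-up is if anything slightly more explicit than the paper's (which states the iterated product and the reversal of the argument more tersely), but it is the same proof.
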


\begin{proof}We refer to \cite{Fulton97} for definitions and details of Littlewood-Richardson skew tableaux.  Consider the Littlewood-Richardson coefficients $c_{\lambda,\mu}^{\nu}$ corresponding to the product of two Schur functions

$$s_{\lambda}s_{\mu}=\sum_{\nu}c_{\lambda,\, \mu}^{\nu}\ s_{\nu}.$$

It is well known that the number $c_{\lambda,\, \nu}^{\mu}$ is precisely equal to the number of
Littlewood-Richardson skew tableaux $\nu/ \lambda$ of content $\mu.$  Now suppose there exists a
$N\times M$ matrix $A$ which satisfies the conditions of Corollary \ref{th:combcharcor} with
respect to a sequence $\mathbf L=(L_1,\cdots, L_K).$  For any $k\leq K$ let
$$A(k):=[A_1|\cdots|A_k]$$ denote the submatrix of $A$ consisting of the matrices $A_1, \ldots,
A_k.$  By properties $(i)$ and $(iv),$ the row sums of $A(k)$ yield a partition
\begin{equation}\label{eq:mupartition}
\mu^k:=\left(\sum_j A(k)[i,j]\right)_{i=1}^{N}\end{equation} given in the standard weakly
decreasing form. It is easy to see that $\mu^k/ \mu^{k-1}$ is a well defined skew partition.
Consider the Young diagram corresponding to $\mu^k/ \mu^{k-1}.$  We can fill the boxes of the
$j^{th}$ row of this diagram with $A_k[j,1]$ 1's, $A_k[j,2]$ 2's, $A_k[j,3]$ 3's and so forth in
weakly increasing order.  Properties $(iv)$ and $(v)$ imply that the resulting skew tableau is a
Littlewood-Richardson skew tableau.  Property $(iii)$ implies that content of the tableau is that
of the rectangular partition $(N^{L_k}).$  Hence the existence of the matrix $A(k)$ implies that
the Littlewood-Richardson coefficient
$$c_{\mu^{k-1},\, (N^{L_k})}^{\mu^k}\neq 0.$$
Finally, properties $(ii)$ and $(iii)$ imply that $\mu^K=(M^N).$  By
induction on $k,$ multiplying the Schur functions $s_{(N^{L_1})},\ldots s_{(N^{L_K})}$ gives that
$$c((N^{L_1}),\ldots,(N^{L_K});(M^N))\neq 0.$$  It is easy to see that this argument can be
reversed.  This bijection together with Littlewood-Richardson rule for counting $c_{\lambda,\,
\nu}^{\mu}$ implies that second part of Corollary \ref{th:combcharcor}.  This completes the
proof.\end{proof}

\begin{example}\label{ex1}
We consider two examples where tight fusion frames exist for $N=5$ and $M=8.$

\smallskip

First, consider the sequence $\mathbf L=(2,2,2,2).$  The following matrix
$$A=\left( \begin{array}{cc|cc|cc|cc}
5&0&3&0&0&0&0&0 \\
0&5&0&1&2&0&0&0 \\
0&0&2&2&2&2&0&0 \\
0&0&0&2&1&0&5&0 \\
0&0&0&0&0&3&0&5 \\
\end{array} \right)$$
satisfies the conditions in Corollary \ref{th:combcharcor}.  We write out the corresponding Young tableaux to the partitions
$\mu^1, \mu^2, \mu^3$ and $\mu^4$ with content given by the sub-matrices $A(1)$, $A(2)$, $A(3)$,
$A(4)$:

\bigskip

$$\begin{young}
?1& ?1 & ?1 & ?1 & ? 1 &,&,&,\\
?2& ?2 & ?2 & ?2 & ? 2\\
,\\ ,\\ ,\\
\end{young}\qquad \begin{young}
? 1& ?1 & ?1 & ?1 & ?1 & !! 1& !! 1&!!1\\
? 2& ? 2& ?2 & ?2 & ? 2& !! 2\\
!! 1& !! 1& !! 2& !!2 \\
!! 2& !! 2\\ ,\\
\end{young}$$

$$\begin{young}
? 1& ?1 & ?1 & ?1 & ?1 & !! 1& !! 1&!!1\\
? 2& ? 2& ?2 & ?2 & ? 2& !! 2& ?? 1&??1\\
!! 1& !! 1& !! 2& !!2 & ??1 & ??1 & ?? 2&??2\\
!! 2& !! 2& ??1\\
?? 2& ??2 & ??2\\
\end{young}\qquad \begin{young}
? 1& ?1 & ?1 & ?1 & ?1 & !! 1& !! 1&!!1\\
? 2& ? 2& ?2 & ?2 & ? 2& !! 2& ?? 1&??1\\
!! 1& !! 1& !! 2& !!2 & ??1 & ??1 & ?? 2&??2\\
!! 2& !! 2& ??1& ! 1& ! 1& ! 1& ! 1&!1\\
?? 2& ??2 & ??2 & ! 2& !2 & !2 & !2 &!2\\
\end{young}$$

\bigskip

Note that the all the data can be encoded in the final partition $\mu^4$ as a union of skew Littlewood-Richardson tableaux.

\smallskip

For the second example, we consider $\mathbf L=(3,2,1,1,1)$ and the matrix

$$A=\left( \begin{array}{ccc|cc|c|c|c}
5&0&0&3&0&0&0&0 \\
0&5&0&0&3&0&0&0 \\
0&0&5&0&0&3&0&0 \\
0&0&0&2&0&2&4&0 \\
0&0&0&0&2&0&1&5 \\
\end{array} \right)$$

\bigskip

The corresponding union of Littlewood-Richardson tableaux is given by

\bigskip

$$\begin{young}
?1& ?1& ?1& ?1& ?1& !! 1& !! 1&!!1\\
?2& ?2& ?2& ?2& ?2& !! 2& !! 2&!!2\\
?3& ?3& ?3& ?3& ?3& ??1 & ?? 1&??1\\
!! 1& !!1& ??1& ?? 1& ! 1& ! 1& ! 1&!1\\
!! 2& !!2& !1 & ??? 1& ???1 & ???1 & ???1 &???1\\
\end{young}$$

\end{example}


\section{Combinatorial majorization and hook type sequences} \label{S5}

In this section we give alternate proofs of Theorem \ref{major} on majorization and Theorem \ref{re} on estimates using the combinatorics of Schur functions and Theorem \ref{th:combchar}.  We begin with some fundamental definitions and lemmas on Schur functions.  Let $\lambda\subseteq (M^N).$  We define the dual partition of $\lambda$ in  $(M^N)$ to be the partition

$$\lambda^*:=(M-\lambda_N\geq M-\lambda_{N-1}\geq\cdots\geq M-\lambda_1).$$

\smallskip

$$\begin{young}
\ynoright\ynobottom&\ynobottom&\ynobottom&\ynobottom&&\yframe\ynoright\ynobottom&\ynobottom\\
\ynobottom\ynotop&\ynoframe\lambda&\ynoframe&\ynotop&\yframe\ynoright\ynobottom&\ynotop\ynobottom&\ynotop\ynobottom\\
\ynotop&\ynotop&\ynotop &\yframe\ynoright\ynobottom&\ynoframe&\lambda^*\ynoframe&\ynotop\ynobottom\\
&&\ynotop&\ynotop&\ynotop&\ynotop&\ynotop\\
\end{young}$$

\smallskip

\begin{lemma}\label{lemma:hooks}Let $\lambda\subseteq (M^N)$ and let $p(\lambda)$ denote the number of parts of $\lambda$ equal to $M$.   Assume that for some positive integer $k$ we have that
 $$|\lambda|=N(M-k).$$  Then
 $$c(\lambda,\underbrace{(N),\ldots,(N)}_k;(M^N))\neq 0$$ if and only if $k\geq N-p(\lambda).$\end{lemma}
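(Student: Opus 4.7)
The plan is to rewrite the Littlewood--Richardson coefficient as a Kostka number and then invoke the classical dominance criterion.

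First, since $(N)$ is a one-row partition, $s_{(N)} = h_N$, and iterating Pieri's rule expresses $s_\lambda h_N^k$ as a sum over chains $\lambda \subset \mu^1 \subset \cdots \subset \mu^k$ with each $\mu^i/\mu^{i-1}$ a horizontal strip of size $N$. Recording the step at which each added cell appears identifies
\[
c\bigl(\lambda,\,(N)^k;\,(M^N)\bigr)
\;=\; K_{(M^N)/\lambda,\,(N^k)},
\]
the number of semistandard Young tableaux of skew shape $(M^N)/\lambda$ with content $(N^k)$.

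Second, I would reduce to a straight shape. The $180^\circ$ rotation $(i,j)\mapsto(N+1-i,\,M+1-j)$ sends the skew diagram $(M^N)/\lambda$ bijectively onto the Young diagram of $\lambda^*$ (its $i$th row acquires length $M-\lambda_{N+1-i}=\lambda^*_i$); combined with the entry involution $t\mapsto k+1-t$, which preserves the rectangular content $(N^k)$, it yields a bijection with SSYT of $\lambda^*$ of content $(N^k)$. Hence $c(\lambda,(N)^k;(M^N))=K_{\lambda^*,(N^k)}$, which by the classical criterion is nonzero if and only if $(N^k)\trianglelefteq\lambda^*$ in the dominance order.

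Finally, I would translate the dominance into the stated inequality. Using $\lambda^*_i=M-\lambda_{N+1-i}$, the condition $\sum_{i=1}^r \lambda^*_i \geq rN$ rearranges to $\lambda_{N-r+1}+\cdots+\lambda_N \leq r(M-N)$, i.e.\ the smallest $r$ parts of $\lambda$ sum to at most $r(M-N)$. For necessity, if $k<N-p(\lambda)$ then $\ell(\lambda^*)=N-p(\lambda)>k$, so $\sum_{i=1}^k\lambda^*_i<|\lambda^*|=Nk$, violating dominance. For sufficiency, assume $k\geq N-p(\lambda)$. The $N-p$ ``short'' parts of $\lambda$ (those strictly less than $M$) sum to $(N-p)M-Nk$, and so have average at most $M-N$ precisely because $k\geq N-p$. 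For $r\leq N-p$, the smallest $r$ parts of $\lambda$ lie entirely among these short parts and their average is bounded by the overall short-part average; for $N-p<r\leq k$, the smallest $r$ parts consist of all $N-p$ short parts together with $r-(N-p)$ copies of $M$, summing to $rM-Nk\leq r(M-N)$.

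The main obstacle will be the $180^\circ$ rotation step, where one must carefully verify that the weak/strict row/column conditions flip correctly and that the rectangular content is preserved. A secondary subtlety is that the sufficiency argument tacitly uses $M\geq N$; this is not stated explicitly, but follows from the hypotheses, since $\lambda\subset(M^N)$, $|\lambda|=N(M-k)$, and $p(\lambda)\geq N-k$ together force $(N-k)M\leq N(M-k)$, i.e.\ $M\geq N$.
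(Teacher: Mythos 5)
Your proof is correct, and it rests on the same two pillars as the paper's own argument: the Pieri rule and complementation in the rectangle $(M^N)$. The difference is in how the ingredients are handled. The paper quotes both as known facts: it expands $(s_{(N)})^k$ by Pieri, noting that a partition $\mu$ with $|\mu|=Nk$ occurs there if and only if $\ell(\mu)\le k$, then invokes the rectangle fact that for $\lambda,\mu\subseteq (M^N)$ one has $c_{\lambda,\,\mu}^{(M^N)}\neq 0$ if and only if $\mu=\lambda^*$; the lemma then reduces to the one-line count $\ell(\lambda^*)=N-p(\lambda)$. You instead prove the complementation step yourself — your $180^\circ$ rotation bijection identifying $K_{(M^N)/\lambda,\,(N^k)}$ with $K_{\lambda^*,\,(N^k)}$ is in substance a proof of the paper's rectangle fact in the Kostka setting — and you finish with the dominance criterion for Kostka numbers, verified by explicit inequalities on the smallest parts of $\lambda$. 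Your route buys self-containedness and explicit combinatorics, including the worthwhile observation that $M\ge N$ is forced by the hypotheses; what it costs is length: once you reach $K_{\lambda^*,\,(N^k)}$, you could shortcut the arithmetic by noting that for $|\mu|=Nk$ one has $\mu\trianglerighteq (N^k)$ if and only if $\ell(\mu)\le k$ (if some partial sum $\sum_{i\le r}\mu_i<rN$ then $\mu_r<N$ and the total falls short of $Nk$), which is exactly the paper's length criterion and yields $k\ge \ell(\lambda^*)=N-p(\lambda)$ immediately. One point you leave implicit but which is harmless: dominance must be checked for all $r$, not only $r\le k$; the conditions for $r>k$ follow automatically from the case $r=k$, since the partial sums of $(N^k)$ stabilize at $Nk$ while those of $\lambda^*$ are nondecreasing.
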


\begin{proof}
 The lemma follows from two elementary facts about Schur functions.  Consider the product
$$(s_{(N)})^k=\sum_{\mu} c((N),\ldots,(N);\mu)\ s_{\mu}$$
By the Pieri rule, we have that $c((N),\ldots,(N);\mu)\neq 0$ if and only if $\mu$ has length less than or equal to $k$ and $|\mu|=Nk.$  Furthermore, if $\lambda,\mu\subseteq (M^N)$, then $c^{(M^N)}_{\lambda,\, \mu}\neq 0$ if and only if $\mu=\lambda^*.$  It is easy to check that $\lambda^*$ appears as a summand in the product $(s_{(N)})^k$ precisely when $k\geq N-p(\lambda).$\end{proof}


The following theorem on the product of Schur functions corresponding to rectangular partitions is proved by Okada in \cite[Theorem 2.4]{Okada98}.

\begin{theorem}\label{th:Okada}Fix integers $a,b,N_1,N_2$ with $a\geq b.$ The product of Schur functions

\begin{equation}\label{eq:rectangle}
s_{(N_1^a)}s_{(N_2^b)}=\sum_{\lambda} s_{\lambda},
\end{equation}
where the sum is over all partitions $\lambda$ with length $ \le a+b$ such that
\begin{itemize}
\item $\lambda_{b+1}=\lambda_{b+2}=\cdots=\lambda_{a}=N_1.$
\item $\lambda_b\geq\max\{N_1,N_2\}.$
\item $\lambda_i+\lambda_{a+b+1-i}=N_1+N_2\qquad \forall i\in\{1,\ldots, b\}$\end{itemize}\end{theorem}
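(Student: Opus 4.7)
The plan is to apply the Littlewood--Richardson rule, as in Corollary \ref{th:combcharcor}, to the product $s_{(N_1^a)} s_{(N_2^b)}$, and to exploit the rigidity imposed by rectangular content. Write
\[
s_{(N_1^a)} s_{(N_2^b)} = \sum_\lambda c^\lambda_{(N_1^a),(N_2^b)} s_\lambda.
\]
Then $c^\lambda_{(N_1^a),(N_2^b)}$ equals the number of Littlewood--Richardson skew tableaux of shape $\lambda/(N_1^a)$ with content $(N_2^b)$. The goal is to show that this coefficient is always $0$ or $1$, and equals $1$ precisely for the $\lambda$ satisfying the three listed conditions.

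First, I would analyze the top $a$ rows of the skew shape. Entries lie in $\{1,\ldots,b\}$. The ballot condition on the reverse reading word forces the rightmost entry of row $1$ of the skew shape to be $1$, and the weakly increasing row condition then forces every entry of row $1$ of the skew shape to be $1$. An easy induction on $i$ (column-strictness bounds entries in row $i$ from below by $i$, while the ballot condition together with row-weak-increase bounds them from above by $i$) shows that every entry of row $i$ of the skew shape, for $1 \leq i \leq b$, equals $i$. In particular rows $b+1,\ldots,a$ of the skew shape must be empty, which forces $\lambda_{b+1}=\cdots=\lambda_a=N_1$.

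Next, I would analyze rows $a+1,\ldots,a+b$. These cells form a straight sub-tableau of shape $\nu := (\lambda_{a+1},\ldots,\lambda_{a+b})$. From the top-rows analysis, letter $j$ has been used exactly $\lambda_j - N_1$ times, so it must appear $(N_1+N_2) - \lambda_j$ times among these bottom rows. The ballot condition on the whole tableau translates into a shifted ballot condition on the reverse reading word of this straight sub-tableau, where the shift is the prefix $1^{\lambda_1 - N_1} 2^{\lambda_2 - N_1} \cdots b^{\lambda_b - N_1}$. The key lemma is that this shifted ballot condition, together with the SSYT conditions, uniquely determines the shape $\nu$ and its filling; I would prove it by induction on $b$, peeling off the bottom row at each stage. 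The outcome is $\nu_i = N_1 + N_2 - \lambda_{b+1-i}$, which after reindexing is exactly the symmetry $\lambda_j + \lambda_{a+b+1-j} = N_1 + N_2$. The bound $\lambda_b \geq \max\{N_1,N_2\}$ then follows: $\lambda_b \geq N_1$ comes from $\lambda \supseteq (N_1^a)$, while $\lambda_b \geq N_2$ is forced by the partition requirement $\lambda_{a+1} \leq \lambda_a = N_1$ combined with $\lambda_{a+1} = N_1 + N_2 - \lambda_b$.

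Finally, I would check existence: for any $\lambda$ satisfying the three conditions, the prescription (row $i$ of the skew shape is all $i$'s for $i\leq b$, and rows $a+1,\ldots,a+b$ filled by the unique filling from the lemma) produces a bona fide LR skew tableau. The main obstacle is the induction lemma in the previous paragraph: the shifted ballot condition must rigidly pin down both $\nu$ and its filling, which requires careful tracking of cumulative letter counts through the reverse reading word. As a sanity check, the $b=2$ case can be worked out directly: row $2$ of $\nu$ must lie entirely under $1$'s in row $1$, the ballot bound $p_1 \geq p_2$ (where $p_j := \lambda_j - N_1$) forces the rightmost run of $2$'s in row $1$ to have length at most $p_1-p_2$, and combining these two opposite constraints pins down $\nu = (N_2-p_2, N_2-p_1)$. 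The induction step on $b$ is modelled on this $b=2$ analysis, applied to the rightmost two rows at each stage.
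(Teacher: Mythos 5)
First, a point of comparison: the paper does not prove this theorem at all — it quotes it from Okada \cite[Theorem 2.4]{Okada98} — so your argument has to stand entirely on its own. Your overall skeleton (top rows forced, reduction to a shifted-ballot problem for a straight-shape tableau, a rigidity lemma pinning down that tableau) is sound, and your Step 1 analysis of rows $1,\ldots,a$ is correct. The first genuine gap is the case $a=b$. You invoke $\lambda_a=N_1$ twice: once to claim that rows $a+1,\ldots,a+b$ form a straight sub-tableau decoupled from the top part, and once to deduce $\lambda_b\geq N_2$ from $\lambda_{a+1}\leq\lambda_a=N_1$. But when $a=b$ the first bullet of the theorem is vacuous and $\lambda_a=N_1$ is simply false in general: for $a=b=1$, $N_1=1$, $N_2=3$, the summand $\lambda=(3,1)$ of $s_{(1)}s_{(3)}$ has $\lambda_a=3\neq N_1$. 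Both claims can be repaired, but the repair must be stated: any cell of row $a+1$ in a column $c>N_1$ lies directly below a cell of row $a$ of the skew shape, which your Step 1 has already filled with the maximal letter $b$ (when $a=b$), so column-strictness forbids it. Hence $\lambda_{a+1}\leq N_1$ in all cases — for $a>b$ by the first bullet, for $a=b$ by this argument — and this is what decouples the bottom rows and, combined with $\lambda_{a+1}=N_1+N_2-\lambda_b$, gives $\lambda_b\geq N_2$.

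The second gap is the key lemma itself, which you correctly identify as the main obstacle but leave unproved; moreover, ``peeling off the bottom row'' is the wrong induction. Removing the bottom row changes the content vector uncontrollably, so the inductive hypothesis loses the special form $m_j=N_2-p_j$; and since the bottom row is read last, the ballot prefixes do not directly constrain it. The induction that does work, and which is the verbatim generalization of your $b=2$ ``two opposite constraints'' computation, peels off the largest letter instead. The cells containing $b$ form a horizontal strip, and deleting them from the reverse reading word shows that $T|_{\leq b-1}$ inherits the shifted ballot condition for the shift $(p_1,\ldots,p_{b-1})$ with contents still of the form $m_j=N_2-p_j$; by induction its shape is $\nu'=(m_{b-1},\ldots,m_1)$ with a unique filling. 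If $s_i$ denotes the number of $b$'s appended to row $i$, then the very first prefix of the word (the $b$'s at the right end of row 1) forces $s_1\leq p_{b-1}-p_b=m_b-m_{b-1}$, while the horizontal-strip condition forces $s_i\leq\nu'_{i-1}-\nu'_i$ for $i\geq2$, hence $\sum_{i\geq2}s_i\leq m_{b-1}$ and $s_1\geq m_b-m_{b-1}$. So every inequality is tight, pinning $\nu=(m_b,m_{b-1},\ldots,m_1)$ and the filling uniquely. Until the lemma is closed by some such argument, your proposal is a correct plan rather than a proof.
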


We now give an alternate proof of Theorem \ref{major} using Theorem \ref{th:Okada} in the case when
$N_1=N_2.$

\begin{lemma}Fix a positive integer $N$ and let $0<a<b.$  Then the Littlewood-Richardson coefficients $$c_{(N^b),\, (N^a)}^{\lambda}\leq c_{(N^{b-1}),\, (N^{a+1})}^{\lambda}.$$ In particular, Theorem \ref{major} on majorization of tight fusion frames follows.\end{lemma}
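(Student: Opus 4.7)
The plan is to use Okada's product formula (Theorem \ref{th:Okada}) in the special case where both rectangular factors have the same width $N$. The decisive observation is that Okada's expansion is then \emph{multiplicity-free}, so every Littlewood-Richardson coefficient on either side of the claimed inequality is either $0$ or $1$. Consequently, the inequality $c^\lambda_{(N^b),(N^a)} \leq c^\lambda_{(N^{b-1}),(N^{a+1})}$ reduces to the purely combinatorial task of showing that every partition $\lambda$ appearing in the expansion of $s_{(N^b)}\,s_{(N^a)}$ also appears in the expansion of $s_{(N^{b-1})}\,s_{(N^{a+1})}$.

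I first dispose of the trivial case $b = a+1$, in which the two products agree by commutativity. Assuming $b\geq a+2$, I apply Theorem \ref{th:Okada} to both sides with the larger rectangle placed first. For the left-hand product this yields the conditions: (L1) $\lambda$ has length $\leq a+b$, (L2) $\lambda_{a+1} = \cdots = \lambda_b = N$, (L3) $\lambda_a \geq N$, and (L4) $\lambda_i + \lambda_{a+b+1-i} = 2N$ for $i = 1,\ldots,a$. The analogous conditions for the right-hand product read: (R1) length $\leq a+b$, (R2) $\lambda_{a+2} = \cdots = \lambda_{b-1} = N$, (R3) $\lambda_{a+1} \geq N$, and (R4) $\lambda_i + \lambda_{a+b+1-i} = 2N$ for $i = 1,\ldots,a+1$. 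The implication (L1)--(L4) $\Rightarrow$ (R1)--(R4) is then a direct check: (L1) = (R1); (L2) is strictly stronger than (R2) and also forces (R3) since $\lambda_{a+1} = N$; and (L4) gives (R4) for $i \leq a$, while the only new instance $i = a+1$ reads $\lambda_{a+1} + \lambda_b = N + N = 2N$, which is automatic from (L2).

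For the deduction of Theorem \ref{major}, I invoke the recursive reduction already carried out in its original proof: any majorization $\mathbf L \preccurlyeq \mathbf L'$ can be refined to a chain in which consecutive partitions differ only by the swap $(\tilde L_m, \tilde L_{m'}) \to (\tilde L_m+1, \tilde L_{m'}-1)$ with $\tilde L_m \geq \tilde L_{m'}$. By Theorem \ref{th:combchar}, membership in $\TFF(\alpha, N)$ is equivalent to the nonvanishing of the multi-factor coefficient $c((N^{L_1}),\ldots,(N^{L_K});(M^N))$. Using associativity of Schur-function multiplication, I factor out the Schur functions indexed by unchanged positions; the remaining two-factor product is exactly the one controlled by the lemma, with $b = \tilde L_m+1$ and $a = \tilde L_{m'}-1$. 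Because Schur-function multiplication has nonnegative coefficients, the coefficientwise inequality between the two two-factor products survives multiplication by the common factor and extraction of the $s_{(M^N)}$-coefficient. This gives a nonvanishing LR coefficient for $\mathbf L^{j-1}$ whenever one exists for $\mathbf L^j$, and iterating down the chain yields $\mathbf L \in \TFF(\alpha,N)$.

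The main obstacle, though modest, is the index-bookkeeping in the support-containment check of the second paragraph. One minor side issue must be addressed: when $\tilde L_{m'} = 1$ the parameter $a = 0$ falls outside the hypothesis of the lemma. In that case the relevant two-factor comparison is $s_{(N^{\tilde L_m+1})} \leq s_{(N^{\tilde L_m})}\,s_{(N)}$ coefficientwise, which is immediate from Pieri's rule (the rectangle $(N^{\tilde L_m+1})$ is obtained from $(N^{\tilde L_m})$ by adjoining a single horizontal $N$-strip). Equivalently one can simply drop the newly created zero entry from the sequence since $s_{(N^0)} = 1$ contributes trivially to the product, and conclude as above.
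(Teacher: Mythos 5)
Your proof is correct and follows essentially the same route as the paper: both deduce the coefficientwise inequality from the multiplicity-free rectangular product formula of Theorem \ref{th:Okada} by checking containment of the supports of the two expansions, and both obtain Theorem \ref{major} via Theorem \ref{th:combchar}. Your write-up in fact supplies details the paper leaves implicit, namely the explicit verification of Okada's conditions and the boundary case $\tilde L_{m'}=1$ (i.e.\ $a=0$), which you rightly settle by Pieri's rule or by discarding the trivial factor $s_{(N^0)}=1$.
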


\begin{proof}It is easy to check the $\lambda$ that appear in the summation \eqref{eq:rectangle} for the pair $((N^b), (N^a))$ are contained in the $\lambda$ that appear in the summation \eqref{eq:rectangle} for the pair $((N^{b-1}), (N^{a+1})).$  This proves the inequality.  The application to tight fusion frames follows from Theorem \ref{th:combchar}.\end{proof}

It is easy to see that by majorization, the following theorem is equivalent to Theorem \ref{re} on
estimates.

\begin{theorem}\label{th:hooks}Assume the conditions in Theorem \ref{th:combchar}.  Further assume that $\alpha=M/N< 2.$  If $(L_1\geq L_2\geq\cdots\geq L_K)\in \TFF(\alpha,N),$ then we have the following necessary conditions.

\begin{enumerate}
\item $L_1\leq M-N.$

\smallskip

\item $L_1+L_2\leq N$

\smallskip

\item If $\alpha>3/2$, then $L_1+L_2+L_3\leq 2(M-N).$

\smallskip

\item If $\alpha<3/2$, then $L_1+L_2+L_3\leq N$

\end{enumerate}

\smallskip

Conversely, suppose $L_1,L_2,L_3$ satisfy the above conditions and $L_4=\cdots=L_K=1.$  Then
$(L_1\geq L_2\geq\cdots\geq L_K)\in \TFF(\alpha,N).$\end{theorem}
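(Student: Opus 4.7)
The plan is to apply Theorem \ref{th:combchar} to convert the statement into a non-vanishing question for the Littlewood-Richardson coefficient $c((N^{L_1}), \ldots, (N^{L_K}); (M^N))$, and to attack this via two tools: Okada's product formula (Theorem \ref{th:Okada}) for two rectangular Schur functions, and Lemma \ref{lemma:hooks} for products with $s_{(N)}$.

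For the necessary condition (2), I would apply Theorem \ref{th:Okada} to the pair $s_{(N^{L_1})} s_{(N^{L_2})}$. The palindrome identity $\lambda_i + \lambda_{L_1+L_2+1-i} = 2N$ for $i \le L_2$, together with $\alpha < 2$ forcing $\lambda_1 \le M < 2N$, gives $\lambda_{L_1+L_2} = 2N - \lambda_1 > 0$. So every $\lambda$ appearing in the pair product has length exactly $L_1 + L_2$. Since $\mu \supseteq \lambda$ for any $\mu$ with $c_{\lambda,\bullet}^\mu \ne 0$, one obtains $\ell(\mu) \ge L_1 + L_2$, and setting $\mu = (M^N)$ yields $L_1 + L_2 \le N$. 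Condition (1) $L_1 \le M - N$ I would deduce from the Naimark duality (Theorem \ref{du2}) applied to the trivial rank bound in the dual parameters, whose combinatorial realization is developed in Section \ref{S6}. Conditions (3) and (4) then come from the analogous analysis of the triple product $s_{(N^{L_1})} s_{(N^{L_2})} s_{(N^{L_3})}$ via two stages of Okada, with the case split at $\alpha = 3/2$: when $\alpha < 3/2$ the length-bound argument propagates to give (4), and when $\alpha > 3/2$ the conjugate/dual version yields (3). A key feature of this approach is that Okada's formula handles the full range $1 < \alpha < 2$ uniformly, which was not achievable with Fillmore's theorem alone in Section \ref{S4}.

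For the hook-case sufficiency, assume (1)--(4) hold with $L_4 = \ldots = L_K = 1$, so $K - 3 = M - L_1 - L_2 - L_3$. By Lemma \ref{lemma:hooks}, it suffices to exhibit a partition $\lambda$ appearing in $s_{(N^{L_1})} s_{(N^{L_2})} s_{(N^{L_3})}$ with $\lambda \subseteq (M^N)$, $|\lambda| = N(L_1 + L_2 + L_3)$, and $p(\lambda) \ge N - (K - 3) = N - M + L_1 + L_2 + L_3$. A natural candidate is $\lambda = (M^p, (M-N)^{K-3})$ with $p = N - M + L_1 + L_2 + L_3$, which has $\ell(\lambda) = N$ and the correct total size. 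The plan is to verify, by two iterations of Okada, that this $\lambda$ lies in the triple product: first produce an intermediate $\lambda' \in s_{(N^{L_2})} s_{(N^{L_3})}$, and then establish $c_{(N^{L_1}), \lambda'}^{\lambda} \ne 0$ by exhibiting an explicit Littlewood-Richardson skew tableau on $\lambda / \lambda'$ of content $(N^{L_1})$.

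The main obstacle is this explicit construction. Okada applies to only two rectangles at a time, so the triple-product argument must stage the factors $s_{(N^{L_2})} s_{(N^{L_3})}$ first and then handle the outer factor $s_{(N^{L_1})}$ via the general LR rule. Moreover, the argument requires separate case analyses for $\alpha < 3/2$ and $\alpha > 3/2$, since the geometry of the Okada palindrome constraints and the resulting skew shape $\lambda/\lambda'$ differ on either side of the critical value $3/2$. Conditions (1)--(4) will be used precisely to guarantee the nonnegativity of the Okada parameters at the first stage and the existence of an LR filling at the second stage, with $p$ turning out to be nonnegative exactly when (3) or (4) hold in the appropriate range.
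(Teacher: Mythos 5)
Your high-level frame---reduce via Theorem \ref{th:combchar} to non-vanishing of $c((N^{L_1}),\ldots,(N^{L_K});(M^N))$ and attack it with Theorem \ref{th:Okada} and Lemma \ref{lemma:hooks}---is the paper's, and two of your steps are fine: your proof of (2) is exactly the paper's (Okada's palindrome identity plus $\lambda_1\le M<2N$), and deducing (1) from Naimark duality (the same sequence is a TFF in the dual dimension $M-N$, so $L_1\le M-N$; combinatorially, $((M-N)^{L_1})$ must fit inside $(M^{M-N})$ by Theorem \ref{th:combNai}) is a legitimate alternative to the paper's route, which instead reduces by majorization to the hook case and applies Lemma \ref{lemma:hooks} with $\lambda=(N^{L_1})$, $p(\lambda)=0$. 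The first genuine gap is condition (3): ``the conjugate/dual version yields (3)'' is not an argument, and no such duality is available. Naimark duality preserves the ranks but sends $\alpha\in(3/2,2)$ to $\alpha/(\alpha-1)\in(2,3)$, outside the theorem's hypothesis $\alpha<2$; spatial duality stays in dimension $N$ but replaces the ranks by $N-L_K,\ldots,N-L_1$, so it says nothing about $L_1+L_2+L_3$. The paper's proof of (3) needs a genuinely new ingredient: when $L_2+L_3\le L_1$ the bound follows from (1) alone, and when $L_1\le L_2+L_3$ one shows by the Littlewood-Richardson rule (using $3N<2M$: three layered bricks of width $N$ cannot span $M$ more than once) that every $\lambda\subseteq(M^N)$ occurring in $s_{(N^{L_1})}s_{(N^{L_2})}s_{(N^{L_3})}$ has at most $L_1$ parts equal to $M$; feeding $p(\lambda)\le L_1$ into Lemma \ref{lemma:hooks} gives $L_2+L_3\le M-N$, and (1) finishes. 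Nothing in your sketch supplies this bound on $p(\lambda)$. (For (4), your ``two stages of Okada'' also cannot be taken literally, since after the first stage $\lambda$ is not a rectangle; but the paper's fix---$\lambda\supseteq((M-N+1)^{L_1+L_2})$ plus monotonicity of LR products under containment---is close enough to what you wrote to be recoverable.)

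The second gap breaks your sufficiency argument as written: your candidate $\lambda=(M^p,(M-N)^{K-3})$ can have vanishing LR coefficient even when (1)--(4) hold. Take $N=4$, $M=5$, $(L_1,L_2,L_3)=(1,1,1)$, $K=5$, i.e.\ the sequence $(1,1,1,1,1)\in\TFF(5/4,4)$ from the paper's tables. Then $p=2$ and your candidate is $\lambda=(5,5,1,1)$, but $c((4),(4),(4);(5,5,1,1))=0$: by the Pieri rule the only partitions in $s_{(4)}s_{(4)}$ contained in $(5,5,1,1)$ are $(5,3)$ and $(4,4)$, and neither $(5,5,1,1)/(5,3)$ nor $(5,5,1,1)/(4,4)$ is a horizontal strip (each has two boxes in a single column). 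The coefficient is instead realized by $\lambda=(5,5,2)$, which has the same $p(\lambda)=2$ but a different shape; this is precisely why the paper builds $\lambda$ through explicit, case-dependent skew LR tableaux (its Figures 3 and 4, split both at $\alpha=3/2$ and according to whether $L_1\le L_2+L_3$ or $L_2+L_3\le L_1$) rather than taking a flat two-block shape. Your closing claim that $p\ge 0$ ``exactly when (3) or (4) hold'' is also false: (3) and (4) are upper bounds on $L_1+L_2+L_3$, so they cannot force $L_1+L_2+L_3\ge M-N$; e.g.\ $N=10$, $M=19$, $(L_1,L_2,L_3)=(2,2,2)$ with thirteen trailing $1$'s satisfies (1)--(3) but has $p=-3$. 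That regime happens to be harmless, since Lemma \ref{lemma:hooks} then only requires $p(\lambda)\ge N-(K-3)<0$, but your proof must notice this and still exhibit some $\lambda\subseteq(M^N)$ in the triple product.
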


\begin{proof}Recall that for any partition $\lambda\subseteq (M^N)$, we let $p(\lambda)$ denote the number of parts of $\lambda$ equal to $M.$  First we prove part (1).  By majorization, it suffices to assume that $L_2=1.$  Part (1) now follows from Lemma \ref{lemma:hooks} by setting $\lambda =(N^{L_1})$ and observing that $p((N^{L_1}))=0.$

\smallskip

We now prove part (2).  By majorization, it suffices to assume that $L_3=1.$  Consider the product

\begin{equation}\label{eq:schurL1L2}s_{(N^{L_1})}s_{(N^{L_2})}=\sum_{\lambda} s_{\lambda}.\end{equation}

By Theorem \ref{th:Okada}, we have that $\lambda_1+\lambda_{L_1+L_2}=2N$ for every $\lambda$ in the
sum \eqref{eq:schurL1L2}.  If $\lambda\subseteq (M^N)$, then $\lambda_1\leq M.$ Hence
$$\lambda_{L_1+L_2}=2N-\lambda_1\geq 2N-M>0$$ since $\alpha<2.$  This implies that $L_1+L_2\leq N$
since $(M^N)$ has only $N$ parts.

\smallskip

For part (3), we assume that $L_4=1.$  First, if  $L_2+L_3\leq L_1,$  then by part (1),
$L_1+L_2+L_3\leq 2(M-N).$  Next, we assume $L_1\leq L_2+L_3.$  Consider the product

\begin{equation}\label{eq:schurL1L2L3}s_{(N^{L_1})}s_{(N^{L_2})}s_{(N^{L_3})}=\sum_{\lambda} c((N^{L_1}),(N^{L_2}),(N^{L_3});\lambda)\  s_{\lambda}\end{equation}

Since $\alpha>3/2,$ for any $\lambda\subseteq (M^N)$ such that
$c((N^{L_1}),(N^{L_2}),(N^{L_3});\lambda)\neq 0,$ we have that $p(\lambda)\leq L_1.$ This can be seen by considering $L_2$ and $L_3$ as large as possible, hence $L_1=L_2=L_3$. One can show using the Littlewood-Richardson rule that since $3N<2M$, 3 layered bricks of width $N$ cannot span $M$ more than once, see diagram below.

\centerline{
\begin{young}
]=  \ynobottom & \ynobottom & \ynobottom & \ynobottom & \ynobottom & \ynobottom & \ynobottom & \ynobottom & ]= \ynobottom & \ynobottom & \ynobottom & \ynobottom & \ynobottom
\\
\ynobottom\ynotop & , &  , & , & , & , & , & , & \yframe\ynobottom\ynotop\ynoright & , & , & , & \ynobottom\ynotop
\\
\ynobottom\ynotop & , & , & , & $\mu^1$ \ynoframe & , & , & , & \yframe\ynotop\ynoright\ynobottom & , & $\mu^2$ \ynoframe & , &\ynobottom\ynotop
\\
\ynotop & \ynotop & \ynotop & \ynotop & \ynotop & \ynotop & \ynotop & \ynotop & \yframe\ynotop\ynoright & \ynotop & \ynotop & \ynotop & \ynotop
\\
\ynobottom\ynotop & , & ,  & \yframe\ynobottom\ynotop\ynoright & , & , & , & , & , & , & ,  & \yframe\ynobottom\ynoright\ynotop
\\
]= \ynobottom\ynotop & , & ,  & \yframe\ynobottom\ynotop\ynoright & , & , & , & , & , & , & ,  & \yframe\ynobottom\ynoright\ynotop
\\
]= \ynobottom\ynotop & \ynotop\ynobottom $\mu^2$ & ,  & \yframe\ynobottom\ynotop\ynoright & , & , &  \ynotop\ynobottom $\mu^3$ & , & , & , & ,  & \yframe\ynobottom\ynoright\ynotop
\\
]= \ynotop & \ynotop & \ynotop  & \yframe\ynotop\ynoright & \ynotop &\ynotop &\ynotop &\ynotop &\ynotop &\ynotop &\ynotop
\end{young}}

By Lemma
\ref{lemma:hooks},
$$M-L_1-L_2-L_3\geq N-p(\lambda)\geq N-L_1.$$
Hence $L_2+L_3\leq M-N.$  This
proves part (3).

\smallskip

For part (4), fix any $\lambda$ in the summand found in equation $\eqref{eq:schurL1L2}$ such that $\lambda\subseteq (M^N).$  Since $\alpha< \frac{3}{2}$, we have that
$$\lambda_{L_1+L_2}=2N-\lambda_1\geq 2N-M> M-N.$$
Hence the rectangular partition $((M-N+1)^{L_1+L_2})\subseteq \lambda.$  Comparing the two products
\begin{equation}\label{eq:schur3}s_{\lambda}s_{(N^{L_3})}= \sum_{\mu'} c_{\lambda, (N^{L_3})}^{\mu'}\ s_{\mu'}\end{equation}
and
\begin{equation}\label{eq:schur4}s_{((M-N+1)^{L_1+L_2})}s_{(N^{L_3})}=\sum_{\mu} s_{\mu}\end{equation}
we have that any partition $\mu'$ from equation \eqref{eq:schur3} such that $c_{\lambda, (N^{L_3})}^{\mu'}\neq 0$ contains some $\mu$ from equation \eqref{eq:schur4}.  Therefore it is enough to consider the partitions $\mu$ from \eqref{eq:schur4}.  By Theorem \ref{th:Okada}, we get that
$$\mu_1+\mu_{L_1+L_2+L_3}=M-N+1+N=M+1$$ for every $\mu$ in the sum \eqref{eq:schur4}.  Hence if $\mu\subseteq (M^N)$, then $\mu_{L_1+L_2+L_3}>0$ since $\mu_1\leq M.$  Thus $L_1+L_2+L_3\leq N.$  This proves part (4).

\smallskip

To prove sufficiency, we construct $\lambda$ in the sum \eqref{eq:schurL1L2L3} such that $\lambda\subseteq (M^N)$ and $c((N^{L_1}),(N^{L_2}),(N^{L_3});\lambda)\neq 0.$ One can show  using the Littlewood-Richardson rule that parts (1)--(4) imply that such a $\lambda$ exists. Furthermore, we can construct $\lambda$ such that $p(\lambda)=L_1$ if $L_1\leq L_2+L_3$ or $p(\lambda)=L_2+L_3$ if $L_2+L_3\leq L_1$, see Figures 3 and 4.
In either case, Lemma \ref{lemma:hooks} implies that we only need to check that $L_1+L_2+L_3\leq 2(M-N).$ However, this is already a necessary condition.  This completes the proof of the theorem.\end{proof}

\begin{figure}[!ht]
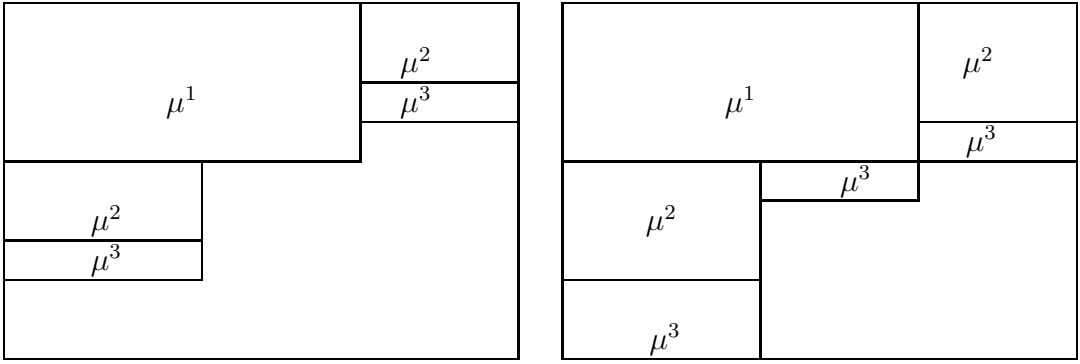

\label{fig3}
\centerline{
\begin{young}
]=  \ynobottom & \ynobottom & \ynobottom & \ynobottom & \ynobottom & \ynobottom & \ynobottom& \ynobottom & \ynobottom & ]= \ynobottom &  \ynobottom  &  \ynobottom & \ynobottom
\\
\ynobottom\ynotop & , &  , & , & , & , & , & , & , & \yframe\ynotop\ynoright  & $\mu^2$ \ynotop & \ynotop &  \ynotop
\\
\ynobottom\ynotop & , & , & , & $\mu^1$ \ynoframe & , & , & , & , & \yframe\ynotop\ynoright &  $\mu^3$ \ynotop &\ynotop & \ynotop
\\
\ynotop & \ynotop & \ynotop & \ynotop & \ynotop & \ynotop & \ynotop & \ynotop & \ynotop & \yframe\ynotop\ynoright\ynobottom & , & , & \ynotop\ynobottom
\\
\ynobottom\ynotop & , & , & , & ,  & \yframe\ynotop\ynobottom\ynoright & , & , & , & , & , & , & \ynobottom\ynotop
\\
\ynotop & \ynotop & \ynotop $\mu^2$ & \ynotop & \ynotop  & \yframe\ynotop\ynoright\ynobottom & , & , & , & , & ,  & , & \ynotop\ynobottom \\
\ynotop & \ynotop & \ynotop $\mu^3$ & \ynotop & \ynotop  & \yframe\ynotop\ynoright\ynobottom & , & , & , & , & ,  & , & \ynotop\ynobottom \\
\ynotop\ynobottom & , & ,  & , & , & , & , & , & , & , & , &  , & \ynobottom\ynotop
\\
\ynotop &\ynotop &\ynotop  &\ynotop &\ynotop &\ynotop &\ynotop &\ynotop &\ynotop  &\ynotop &\ynotop &\ynotop &\ynotop
\end{young}
\quad
\begin{young}
]=  \ynobottom & \ynobottom & \ynobottom & \ynobottom & \ynobottom & \ynobottom & \ynobottom& \ynobottom & \ynobottom & ]= \ynobottom &  \ynobottom  &  \ynobottom & \ynobottom
\\
\ynobottom\ynotop & , &  , & , & , & , & , & , & , & \yframe\ynobottom\ynotop\ynoright  & $\mu^2$ \ynoframe & , &  \ynobottom\ynotop
\\
\ynobottom\ynotop & , & , & , & $\mu^1$ \ynoframe & , & , & , & , & \yframe\ynotop\ynoright &  \ynotop &\ynotop & \ynotop
\\
\ynotop & \ynotop & \ynotop & \ynotop & \ynotop & \ynotop & \ynotop & \ynotop & \ynotop & \yframe\ynotop\ynoright & \ynotop $\mu^3$ & \ynotop & \ynotop
\\
\ynobottom\ynotop & , & , & , & ,  & \yframe\ynotop\ynoright & \ynotop & $\mu^3$ \ynotop & \ynotop & \yframe\ynotop\ynobottom\ynoright & , & , & \ynobottom\ynotop
\\
\ynotop\ynobottom & , & ,  $\mu^2$ & , & , & \yframe\ynotop\ynoright\ynobottom & , & , & , & , & , &  , & \ynobottom\ynotop
\\
\ynotop & \ynotop & \ynotop & \ynotop & \ynotop  & \yframe\ynotop\ynoright\ynobottom & , & , & , & , & ,  & , & \ynotop\ynobottom \\
\ynotop\ynobottom & , & ,  & , & , & \yframe\ynotop\ynoright\ynobottom & , & , & , & , & , &  , & \ynobottom\ynotop
\\
\ynotop &\ynotop &\ynotop $\mu^3$ &\ynotop &\ynotop &\yframe\ynotop\ynoright &\ynotop &\ynotop &\ynotop  &\ynotop &\ynotop &\ynotop &\ynotop
\end{young}
}
\caption{Construction of $\lambda$  for $\alpha<3/2$ as a union of Littlewood-Richardson skew tableaux $\mu^1,\mu^2,\mu^3$ when $ L_2+L_3 \le L_1$ and $L_1 \le L_2 + L_3$, resp. This construction is possible since $L_1+L_2 + L_3 \le N$.} \end{figure}

\begin{figure}[!ht]
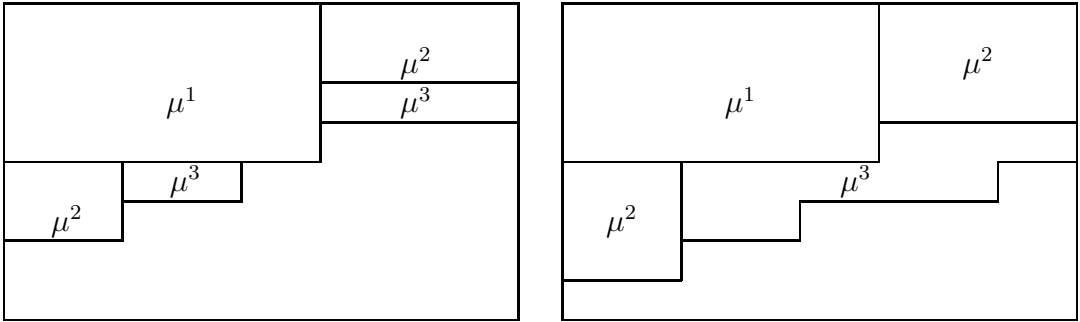

\label{fig4}
\centerline{
\begin{young}
]=  \ynobottom & \ynobottom & \ynobottom & \ynobottom & \ynobottom & \ynobottom & \ynobottom & \ynobottom & ]= \ynobottom & \ynobottom & \ynobottom & \ynobottom & \ynobottom
\\
\ynobottom\ynotop & , &  , & , & , & , & , & , & \yframe\ynotop\ynoright & \ynotop & $\mu^2$ \ynotop & \ynotop & \ynotop
\\
\ynobottom\ynotop & , & , & , & $\mu^1$ \ynoframe & , & , & , & \yframe\ynotop\ynoright & \ynotop & $\mu^3$ \ynotop  & \ynotop &\ynotop
\\
\ynotop & \ynotop & \ynotop & \ynotop & \ynotop & \ynotop & \ynotop & \ynotop & \yframe\ynotop\ynobottom\ynoright & , & ,  & , & \ynobottom\ynotop
\\
\ynobottom\ynotop & , & ,  & \yframe\ynotop\ynoright & \ynotop $\mu^3$ & \ynotop & \yframe\ynotop\ynobottom\ynoright & , &, & , & ,  & , & \ynobottom\ynotop
\\
\ynotop & \ynotop $\mu^2$ & \ynotop & \yframe\ynotop\ynobottom\ynoright & , & , & , & , & , & , & , & , & \ynobottom\ynotop\\
\ynotop\ynobottom & , & ,  & , & , & , & , & , & , & , & , & , & \ynotop\ynobottom
\\
\ynotop &\ynotop &\ynotop &\ynotop &\ynotop &\ynotop &\ynotop &\ynotop &\ynotop &\ynotop &\ynotop &\ynotop &\ynotop
\end{young}
\quad
\begin{young}
]=  \ynobottom & \ynobottom & \ynobottom & \ynobottom & \ynobottom & \ynobottom & \ynobottom & \ynobottom & ]= \ynobottom & \ynobottom & \ynobottom & \ynobottom & \ynobottom
\\
\ynobottom\ynotop & , &  , & , & , & , & , & , & \yframe\ynobottom\ynotop\ynoright & , & $\mu^2$ \ynoframe & , & \ynobottom\ynotop
\\
\ynobottom\ynotop & , & , & , & $\mu^1$ \ynoframe & , & , & , & \yframe\ynotop\ynoright & \ynotop & \ynotop & \ynotop &\ynotop
\\
\ynotop & \ynotop & \ynotop & \ynotop & \ynotop & \ynotop & \ynotop & \ynotop & \yframe\ynobottom\ynotop\ynoright & , & , & , & \ynobottom\ynotop
\\
\ynobottom\ynotop & \ynobottom\ynotop & \ynobottom\ynotop  & \yframe\ynobottom\ynotop\ynoright & \ynobottom\ynotop & \ynobottom\ynotop & \ynotop & $\mu^3$ \ynotop & \ynotop & \ynotop & \ynotop  & \yframe\ynobottom\ynoright & \ynobottom
\\
\ynotop\ynobottom & , $\mu^2$ & , & \yframe\ynotop\ynoright & \ynotop & \ynotop & \yframe\ynotop\ynoright\ynobottom & , & , & , & , & , & \ynobottom\ynotop\\
\ynotop & \ynotop & \ynotop  & \yframe\ynotop\ynoright\ynobottom & , & , & , & , & , & , & , & , & \ynotop\ynobottom \\
\ynotop &\ynotop &\ynotop &\ynotop &\ynotop &\ynotop &\ynotop &\ynotop &\ynotop &\ynotop &\ynotop &\ynotop &\ynotop
\end{young}
}
\caption{Construction of $\lambda$  for $\alpha>3/2$ as a union of Littlewood-Richardson skew tableaux $\mu^1,\mu^2,\mu^3$ when $L_2+L_3 \le L_1$ and $L_1 \le L_2 + L_3$, resp.}\end{figure}

\begin{remark}Parts (2) and (4) of Theorem \ref{th:hooks} can be generalized to the following statement.

\smallskip

Let $2\leq k\leq K.$  If $\displaystyle\alpha<\frac{k}{k-1}$, then $L_1+\cdots +L_{k}\leq N.$

\smallskip

The proof follows the same argument as the proof of Theorem \ref{th:hooks} part (4).\end{remark}


\section{Combinatorial spatial and Naimark duality} \label{S6}

Theorems \ref{du1} and \ref{du2} establish spatial and Naimark dualities for tight fusion frames.
By Theorem \ref{th:combchar}, we have the analogous results for Littlewood-Richardson coefficients.

\begin{corollary}Assume we have a sequence of integers $(L_1\geq \cdots\geq L_K)$ as in Theorem \ref{th:combchar}.  Then
$$c((N^{L_1}),\ldots,(N^{L_K});(M^N))\neq 0 \Leftrightarrow c((N^{N-L_1}),\ldots,(N^{N-L_K});((KN-M)^{N}))\neq 0$$
and
$$c((N^{L_1}),\ldots,(N^{L_K});(M^N))\neq 0 \Leftrightarrow c(((M-N)^{L_1}),\ldots,((M-N)^{L_K});(M^{(M-N)}))\neq 0.$$
\end{corollary}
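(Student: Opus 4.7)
My plan is to read both equivalences as immediate translations, via Theorem~\ref{th:combchar}, of the two dualities established in Theorems~\ref{du1} and~\ref{du2}. Because each duality is an involution on TFF sequences, only one direction of each biconditional must be supplied; the converse is obtained by feeding the dual sequence back into the same argument.

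For the spatial equivalence I would first apply Theorem~\ref{th:combchar} to rephrase the left-hand non-vanishing as $(L_1,\ldots,L_K)\in\TFF(\alpha,N)$ with $\alpha=M/N$. Theorem~\ref{du1} then yields $(N-L_K,\ldots,N-L_1)\in\TFF(K-\alpha,N)$, a sequence whose entries sum to $KN-M$. A second application of Theorem~\ref{th:combchar} to this dual sequence gives $c((N^{N-L_K}),\ldots,(N^{N-L_1});((KN-M)^N))\neq 0$. Since Littlewood-Richardson coefficients are symmetric in their rectangular arguments (Schur polynomials commute), reordering the factors to match the order $(N-L_1,\ldots,N-L_K)$ in the statement is harmless. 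The reverse implication follows by repeating the argument with the dual sequence in place of the original.

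For the Naimark equivalence, Theorem~\ref{th:combchar} again converts the left-hand hypothesis into $(L_1,\ldots,L_K)\in\TFF(\alpha,N)$. Theorem~\ref{du2} then places the same sequence in $\TFF(\tilde\alpha,\tilde N)$ with $\tilde N=M-N$, while the total $\sum L_i=M$ is preserved. A final application of Theorem~\ref{th:combchar} in ambient dimension $\tilde N$, with rectangles $((M-N)^{L_i})$ and target $(M^{M-N})$, delivers the right-hand non-vanishing; involutivity of Naimark duality supplies the converse.

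The one point requiring real care, and the only plausible obstacle, is the column-length hypothesis implicit in Theorem~\ref{th:combchar} when it is invoked in the Naimark direction, namely $L_1\le\tilde N=M-N$. Whenever a TFF actually exists on the left, this is automatic from Theorem~\ref{re}(1), giving $L_1\le(\alpha-1)N=M-N$ (and the analogous bookkeeping for the spatial case is trivial since $N-L_i\le N$). Conversely, if $L_1>M-N$, then the rectangle $((M-N)^{L_1})$ has more rows than the target $(M^{M-N})$, so the right-hand Littlewood-Richardson coefficient vanishes for combinatorial reasons while the left-hand one vanishes by Theorem~\ref{re}(1); both sides are therefore zero and the equivalence holds trivially in this degenerate regime.
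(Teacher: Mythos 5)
Your proposal is correct and follows essentially the same route as the paper, which presents this corollary as an immediate consequence of Theorem \ref{th:combchar} combined with the spatial and Naimark dualities (Theorems \ref{du1} and \ref{du2}), exactly as you do. Your extra verification of the hypothesis $L_1 \le M-N$ before invoking Theorem \ref{th:combchar} in ambient dimension $M-N$ is a detail the paper leaves implicit; note that the paper then goes on to prove a strictly stronger statement (equality, not just simultaneous non-vanishing, of the Littlewood--Richardson coefficients) via explicit bijections on configuration matrices, but that is a separate result beyond this corollary.
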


In this section we prove a much stronger version of the corollary above.  In particular, we prove
that these Littlewood-Richardson coefficients are equal.  We will frequently reference properties
$(i)-(v)$ for matrices defined in the paragraph preceding Corollary \ref{th:combcharcor} using
lower case roman numerals.  We first consider spatial duality.

\begin{theorem}\label{th:combSp}The Littlewood-Richardson coefficients
\begin{equation}\label{eq:Spacomb}c((N^{L_1}),\ldots,(N^{L_K});(M^N))=c((N^{N-L_1}),\ldots,(N^{N-L_K});((KN-M)^{N})).\end{equation}
\end{theorem}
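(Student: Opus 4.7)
The plan is to deduce the identity from a Schur-function identity in exactly $N$ variables, bypassing a direct bijection on Littlewood--Richardson tableaux or matrices. The key tool is the rectangular-complement identity
\begin{equation*}
s_{\lambda}(x_1,\dots,x_N) \;=\; (x_1\cdots x_N)^{\lambda_1}\, s_{\lambda^{\vee}}(x_1^{-1},\dots,x_N^{-1}),
\end{equation*}
where $\lambda^{\vee}:=(\lambda_1-\lambda_N,\lambda_1-\lambda_{N-1},\ldots,\lambda_1-\lambda_1)$ is the complement of $\lambda$ inside the $(\lambda_1)^N$ rectangle, read in reverse. This follows at once from the Weyl bialternant formula by substituting $x_j\mapsto x_j^{-1}$ and clearing the appropriate power of $x_1\cdots x_N$ from numerator and denominator.

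First, apply this identity to each factor $\lambda=(N^{L_i})$, whose reverse-complement inside the $N\times N$ square is precisely $(N^{N-L_i})$; this yields $s_{(N^{L_i})}(x)=(x_1\cdots x_N)^{N}\,s_{(N^{N-L_i})}(x^{-1})$. Multiplying over $i=1,\ldots,K$ gives the Laurent-polynomial identity
\begin{equation*}
\prod_{i=1}^K s_{(N^{L_i})}(x) \;=\; (x_1\cdots x_N)^{KN}\prod_{i=1}^K s_{(N^{N-L_i})}(x^{-1}).
\end{equation*}
Next, expand the right-hand side in the Schur basis: write $\prod_i s_{(N^{N-L_i})}(x^{-1})=\sum_{\mu'} c((N^{N-L_1}),\ldots,(N^{N-L_K});\mu')\, s_{\mu'}(x^{-1})$, summed over partitions $\mu'$ of length at most $N$ (other terms vanish in $N$ variables). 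Applying the complementation identity in the reverse direction to each $s_{\mu'}(x^{-1})$ and absorbing the prefactor $(x_1\cdots x_N)^{KN}$ rewrites the right-hand side as a sum of $c((N^{N-L_1}),\ldots,(N^{N-L_K});\mu')\, s_{\tilde\nu(\mu')}(x)$, where $\tilde\nu(\mu')_i=KN-\mu'_{N+1-i}$ is the reverse-complement of $\mu'$ inside the $(KN)^N$ rectangle.

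Finally, extract the coefficient of $s_{(M^N)}(x)$ on both sides. The left-hand side yields $c((N^{L_1}),\ldots,(N^{L_K});(M^N))$, and on the right the unique $\mu'$ with $\tilde\nu(\mu')=(M^N)$ is $\mu'=((KN-M)^N)$; equating the two coefficients produces the required equality. The main obstacle is purely bookkeeping: one must restrict the expansion to $\mu'$ with $\ell(\mu')\leq N$, verify that each such $\mu'$ satisfies $\mu'_1\leq KN$ so that $s_{\tilde\nu(\mu')}$ is indexed by a genuine partition in the target rectangle, and keep track of the reversals implicit in $\lambda^{\vee}$. A fully combinatorial proof via an explicit bijection on the matrices of Corollary~\ref{th:combcharcor} is also possible, but the symmetric-function argument is shorter and avoids all tableau manipulations.
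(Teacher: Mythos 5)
Your argument is correct, but it takes a genuinely different route from the paper. The paper proves \eqref{eq:Spacomb} by an explicit bijection on the configuration matrices of Corollary \ref{th:combcharcor}: starting from $A=[A_1|\cdots|A_K]$, each block $A_i$ is decomposed uniquely into binary matrices $C_j$, each $C_j$ is replaced by the complementary binary matrix $C_j'$ making $[C_j|C_j']$ invertible, and the resulting blocks are reassembled into $B=[B_K|\cdots|B_1]$, which is then shown to be a configuration matrix for the dual sequence $(N-L_K,\ldots,N-L_1;KN-M)$; the bulk of that proof is the delicate verification of the row-sum-dominance property $(iv)$ for $B$. You instead work in the ring of symmetric polynomials in exactly $N$ variables and use the rectangular-complementation identity $s_\lambda(x_1,\ldots,x_N)=(x_1\cdots x_N)^m\,s_{\lambda^{\vee m}}(x_1^{-1},\ldots,x_N^{-1})$ for $\lambda\subseteq(m^N)$, which does follow from the bialternant formula; multiplying the identities for the factors $(N^{L_i})$, expanding in the Schur basis, and extracting the coefficient of $s_{(M^N)}$ yields the equality. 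The bookkeeping you flag is genuinely all that remains, and it goes through: in $N$ variables only $\mu'$ with $\ell(\mu')\le N$ survive, the Littlewood-Richardson rule forces $\mu'_1\le KN$, the complementation map $\mu'\mapsto\tilde\nu(\mu')$ is injective (so exactly one term contributes to $s_{(M^N)}$, namely $\mu'=((KN-M)^N)$), and Littlewood-Richardson coefficients are unchanged by restriction to $N$ variables when the outer partition has at most $N$ rows, so coefficient extraction is legitimate. As for what each approach buys: yours is shorter, sidesteps the dominance verification entirely, and makes the duality transparent as complementation inside rectangles; the paper's bijection, while more laborious, is constructive --- it converts an explicit witness (tableau or matrix) for one TFF sequence into a witness for its spatial dual, which is what Example \ref{ex:sp} exploits and what mirrors the frame-theoretic operation $P_i\mapsto \mathbf I-P_i$ at the level of the combinatorial data, and it also exhibits a concrete bijection behind the equality of the two counts rather than only their equality.
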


The coefficient $c((N^{L_1}),\ldots,(N^{L_K});(M^N))$ is precisely the number of $N\times M$
matrices $A$ which satisfy the conditions given in the Corollary \ref{th:combcharcor}.  We will
call such a collection of matrices the set of configuration matrices corresponding to $(L_1,\ldots,
L_K; N)$.  We prove Theorem \ref{th:combSp} by providing a bijection between the configuration
matrices corresponding to the coefficients in \eqref{eq:Spacomb}.

\smallskip

Suppose that $c((N^{L_1}),\ldots,(N^{L_K});(M^N))\neq 0$ and fix a configuration matrix
$A=[A_1|A_2| \cdots|A_K].$  For each $A_i$, we construct a $N\times (N-L_i)$ matrix $B_i$ as
follows.  Decompose $$A_i=\sum_{j=1}^N C_j$$ as a sum of binary matrices which satisfy the
following conditions for all integers $y,j$

\begin{enumerate}
 \item $\displaystyle \sum_{x=1}^N C_j[x,y]=1$
 \item $\displaystyle \sum_{x=1}^{N'} (C_j[x,y]- C_j[x,y+1])\geq 0\quad \forall\ N'<N$
 \item $\displaystyle \sum_{x=1}^{N'} (C_j[x,y]- C_{j+1}[x,y])\geq 0\quad \forall\ N'<N.$
\end{enumerate}

Consider $A_2$ from Example \ref{ex1}.  We have that

$$\left( \begin{array}{cc}3&0 \\0&1 \\2&2 \\0&2 \\0&0 \\\end{array} \right)=
 \left( \begin{array}{cc}1&0 \\0&1 \\0&0 \\0&0 \\0&0 \\\end{array} \right)+
 \left( \begin{array}{cc}1&0 \\0&0 \\0&1 \\0&0 \\0&0 \\\end{array} \right)+
 \left( \begin{array}{cc}1&0 \\0&0 \\0&1 \\0&0 \\0&0 \\\end{array} \right)+
 \left( \begin{array}{cc}0&0 \\0&0 \\1&0 \\0&1 \\0&0 \\\end{array} \right)+
 \left( \begin{array}{cc}0&0 \\0&0 \\1&0 \\0&1 \\0&0 \\\end{array} \right).$$

It is easy to see that this decomposition of $A_i$ is unique since $A_i$ satisfies properties $(i),
(iii)$ and $(v).$  For each $C_j$, define the $N\times (N- L_i)$ matrix $C'_j$ to be the unique
binary matrix which satisfies conditions $(1), (2)$ and that $[C_j|C'_j]$ is invertible.  For
example, if $N=5$ then

$$C_j=\left( \begin{array}{cc}1&0 \\0&0 \\0&1 \\0&0 \\0&0 \\\end{array} \right)\quad \rightsquigarrow\quad
 C'_j=\left( \begin{array}{ccc}0&0&0 \\1&0&0 \\0&0&0 \\0&1&0 \\0&0&1 \\\end{array} \right).$$
Define $$B_i:=\sum_{i=1}^N C'_j$$ and consider the $N\times(KN-M)$ matrix
$$B:=[B_K|B_{K-1}|\cdots|B_1].$$

Note that the binary decomposition of $B_i$ into $C'_j$ also satisfies conditions $(1)-(3)$ if we
order the $C'_j$ in reverse.   Moreover, if we apply this algorithm to the matrix $B,$ we will
recover the matrix $A$.  We now record some important observations on the submatrices $A_i$ and
$B_i.$ First, if $x<y$, then
\begin{equation}\label{eq:spa1}A_i[x,y]=B_i[x,y]=0.\end{equation}  Second, we have that
\begin{equation}\label{eq:spa2}A_i[x,y]+B_i[x,x-y]=A_i[x+1,y+1]+B_i[x+1,x-y+1].\end{equation}  In the equations above we take $A_i[x,y]=0$ (resp. $B_i[x,y]=0$) if $x,y$ lie outside the boundaries of $A_i$ (resp. $B_i$).  In the case when $x=y,$ we get
\begin{equation}\label{eq:spa3}A_i[x,x]=A_i[x+1,x+1]+B_i[x+1,1].\end{equation}
Theorem \ref{th:combSp} follows from the proceeding proposition.

\begin{proposition}The matrix $N\times(KN-M)$ matrix $B$ is a configuration matrix for the sequence $(N-L_K,\ldots, N-L_1; KN-M).$ \end{proposition}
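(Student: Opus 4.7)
The plan is to verify the five defining properties from Corollary \ref{th:combcharcor} for $B=[B_K|B_{K-1}|\cdots|B_1]$, viewed as an $N\times(KN-M)$ matrix with block widths $(N-L_K,\ldots,N-L_1)$. Properties (i) and (iii) are immediate: each $B_i=\sum_{j=1}^N C'_j$ is a sum of binary matrices, and since $[C_j|C'_j]$ is invertible and binary it must be a permutation matrix, so every $C'_j$ has exactly one $1$ per column and each column of $B_i$ sums to $N$. For (ii) I would use a counting argument: for each row $x$ the row-$x$ sum of the permutation matrix $[C_j|C'_j]$ equals $1$, so the row-$x$ sums of $A_i$ and $B_i$ add to $N$; summing over the $K$ blocks yields row sum $KN-M$ for $B$.

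For property (v) within each block $B_i$, my plan is to exploit the observation stated just before the proposition: the reverse-ordered decomposition $B_i=\sum_{k=1}^N C'_{N+1-k}$ satisfies conditions (1)--(3). Each $C'_{N+1-k}$ is a submatrix of a permutation matrix, so each of its columns contains exactly one $1$; let $r_k(y)$ denote the row of this $1$. Condition (2) forces $r_k(y)\leq r_k(y+1)$, giving
$$\sum_{x=1}^{l}\bigl(C'_{N+1-k}[x,y]-C'_{N+1-k}[x,y+1]\bigr)=\mathbf{1}\{r_k(y)\leq l<r_k(y+1)\}.$$
Summing over $k$ and comparing with $B_i[l+1,y+1]=\sum_k\mathbf{1}\{r_k(y+1)=l+1\}$ reduces the desired inequality to the observation that $r_k(y)=r_k(y+1)=l+1$ is impossible, because each row of a partial permutation matrix contains at most one $1$.

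The main obstacle will be property (iv), the global row-sum dominance $\sum_{y=1}^l(B[x,y]-B[x+1,y])\geq B[x+1,l+1]$, since the prefix length $l$ may cross the internal block boundaries of $B$. My plan is to exploit identity \eqref{eq:spa2}, rearranged as
$$A_i[x,y]-A_i[x+1,y+1]\;=\;B_i[x+1,x-y+1]-B_i[x,x-y],$$
together with the boundary identity \eqref{eq:spa3}. These identities convert a vertical difference in $B_i$ into an antidiagonal difference in $A_i$; summing telescopically along antidiagonals should translate the row-sum dominance needed for $B$ into a combination of the row-sum dominance (iv) and the blockwise column-sum dominance (v) already known for $A$. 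The reversal of block order in $B=[B_K|\cdots|B_1]$ is essential here: under the antidiagonal flip, the transition from block $B_{i+1}$ to $B_i$ in $B$ corresponds to the transition from $A_i$ to $A_{i+1}$ in $A$, so the dominance inequalities known for $A$ align exactly with those required for $B$, with the boundary contributions at each block transition tracked using \eqref{eq:spa3} and condition (3) of the $C_j$ decomposition. Once (i)--(v) are checked, the algorithm is visibly involutive --- applying it to $B$ recovers $A$ because $C_j$ and $C'_j$ are complementary halves of the same permutation matrix --- giving the bijection between configuration matrices needed for Theorem \ref{th:combSp}.
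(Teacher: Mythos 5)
Your arguments for properties (i), (ii), (iii), and (v) are correct and complete --- indeed more explicit than the paper's, which disposes of these in two sentences. In particular, your observation that each $[C_j|C'_j]$ is a permutation matrix, so that distinct columns of $C'_j$ carry their $1$'s in distinct rows, is exactly the right way to obtain the strict inequality $r_k(y)<r_k(y+1)$ that makes the blockwise column dominance (v) go through.

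The genuine gap is property (iv), which is the crux: the paper itself calls it ``the most challenging part'' and devotes essentially its entire proof to it, while your treatment never gets past the word ``should.'' Two concrete things are missing. First, the mechanism for crossing block boundaries. The paper supposes (iv) fails for $B$ at row $i$ and prefix length $l$, then splits that prefix at the last complete block boundary $l'=\sum_{j=1}^{k}(N-L_{K-j+1})\le l$. Over the \emph{complete} blocks $B_K,\ldots,B_{K-k+1}$ no antidiagonal transport is used at all; instead one uses that every row of $[A_j|B_j]$ sums to $N$ --- precisely the fact you proved for property (ii) but never deploy here --- so a prefix sum of $B[i,\cdot]-B[i+1,\cdot]$ over whole blocks equals, with a sign flip, a suffix sum of $A[i,\cdot]-A[i+1,\cdot]$ over the last $k$ blocks of $A$. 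The identities \eqref{eq:spa2} and \eqref{eq:spa3} are applied only to the single partial block $B_{K-k}$, and \eqref{eq:spa1} is then what allows padding that contribution out to a full suffix of $A_{K-k}$. Your plan of telescoping \eqref{eq:spa2} across all blocks, tracking boundary terms with condition (3), has no such bookkeeping, and it is not clear it can be made to close (condition (3) plays no role in the paper's proof of (iv)). Second, your target is off: you claim the reduction lands on properties (iv) and (v) of $A$, but the contradiction actually reached uses properties (ii) and (iv) of $A$. Constancy of the row sums of $A$ converts prefix dominance into the suffix inequality $\sum_{j\ge m}(A[i+1,j]-A[i,j])\ge A[i,m-1]$, and it is this suffix inequality that the assumed failure of (iv) for $B$ violates; property (v) of $A$ enters only implicitly, through the existence of the $C_j$-decomposition underlying \eqref{eq:spa1}--\eqref{eq:spa3}. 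Until the block-boundary computation is carried out against the right target, the proof of (iv), and hence of the proposition, remains open.
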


\begin{proof}The most challenging part of this proof is to show that the matrix $B$ satisfies property $(iv).$
Hence the majority of this argument is dedicated to the proof this property.  We first consider the
other properties.  Properties $(i)-(iii)$ are immediate by construction of $B$.  Property $(v)$
follows form the fact that each $B_i$ is a sum of binary matrices which satisfy conditions
$(1)-(3).$  We now prove that $B$ satisfies property $(iv)$ by contradiction.  Suppose there exists
integers $i,l$ such that
\begin{equation}\label{eq:spa4}\sum_{j=1}^l (B[i,j]-B[i+1,j])<B[i+1,l+1].\end{equation}
We define the integers $k,l'$ as follows.  Let $k$ denote largest integer for which the partial sum
$$l':=\sum_{j=1}^k (N-L_{K-j+1})\leq l.$$ Hence $l'$ is the number of
columns of the submatrix $[B_K|\cdots |B_{K-k+1}]$ of $B$.

\smallskip

Observe that each row sum of the matrix $[A_j|B_j]$ is equal to $N.$ Combining this observation
with equation \eqref{eq:spa4} gives that
\begin{eqnarray*}\sum_{j=1}^l (B[i,j]-B[i+1,j])&=&\sum_{j=1}^{l'} (B[i,j]-B[i+1,j])+\sum_{j=l'+1}^l (B[i,j]-B[i+1,j])\\
&=&\sum_{j=M-(kN-l'-1)}^{M}(A[i+1,j]-A[i,j])\\
&&\hspace{.5in}+\sum_{j=l'+1}^l (B[i,j]-B[i+1,j])<B[i+1,l+1].\end{eqnarray*} Rewriting this
inequality yields
\begin{eqnarray*}\sum_{j=M-kN+l'+1}^{M}(A[i+1,j]-A[i,j])&<& B[i+1,l+1]-\sum_{j=l'+1}^l (B[i,j]-B[i+1,j])\\
&=&B[i+1,l'+1]+\sum_{j=l'+1}^l (B[i+1,j+1]-B[i,j]) .\end{eqnarray*} The matrix entries of $B$
appearing on the right hand side of the above equation are all contained in the submatrix
$B_{K-k}.$  Applying equations \eqref{eq:spa2},\eqref{eq:spa3}, we get that
\begin{equation}\label{eq:spapf2}\sum_{j=M-kN+l'+1}^{M}(A[i+1,j]-A[i,j])<\sum_{j=0}^{l-l'}
(A_{K-k}[i,i-j]-A_{K-k}[i+1,i-j+1]).\end{equation} By equation \eqref{eq:spa1}, $A_{K-k}[x,y]=0$ if
$y>x.$  Hence we can extend the right hand side of equation \eqref{eq:spapf2} to
\begin{multline*}
\sum_{j=M-kN+l'+1}^{M}(A[i+1,j]-A[i,j])<A_{K-k}[i,i-l+l']\\
+\sum_{j=0}^{L_{K-k}-(i+1)+(l-l')} (A_{K-k}[i,L_{K-k}-j]-A_{K-k}[i+1,L_{K-k}-j]).
\end{multline*}
Now the fact that $A$ satisfies properties $(ii),$ contradicts the fact that it also satisfies
property $(iv).$  This completes the proof.
\end{proof}

\begin{example}\label{ex:sp} Let $N=4$ and consider the sequence $\mathbf L=(2,2,2,1).$  By Corollary \ref{th:combcharcor}, the matrix $A$ below implies that $\mathbf L\in\TFF(7/4,4).$
$$A=\left( \begin{array}{cc|cc|cc|c}
4&0&3&0&0&0&0 \\
0&4&0&1&2&0&0 \\
0&0&1&2&2&2&0 \\
0&0&0&1&0&2&4 \\
\end{array} \right)$$
We get that
$$B=\left( \begin{array}{ccc|cc|cc|cc}
4&0&0&4&0&1&0&0&0 \\
0&4&0&0&2&2&1&0&0 \\
0&0&4&0&0&1&0&4&0\\
0&0&0&0&2&0&3&0&4 \\
\end{array} \right)$$
and hence $(3,2,2,2)\in\TFF(9/4,4).$
\end{example}


We now give the analogous theorem on combinatorial Naimark duality.

\begin{theorem}\label{th:combNai}The Littlewood-Richardson coefficients
\begin{equation}\label{eq:Naicomb}c((N^{L_1}),\ldots,(N^{L_K});(M^N))=c(((M-N)^{L_1}),\ldots,((M-N)^{L_K});(M^{(M-N)})).\end{equation}
\end{theorem}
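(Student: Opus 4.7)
The plan is to mirror the proof strategy of Theorem \ref{th:combSp}: construct an explicit bijection between the sets of configuration matrices counted by the two Littlewood--Richardson coefficients in \eqref{eq:Naicomb}. By Corollary \ref{th:combcharcor}, the left-hand side counts $N\times M$ matrices $A=[A_1|\cdots|A_K]$ with blocks $A_k$ of size $N\times L_k$ satisfying (i)--(v), while the right-hand side counts $(M-N)\times M$ matrices $B=[B_1|\cdots|B_K]$ with blocks $B_k$ of size $(M-N)\times L_k$ satisfying the analogous properties (with column sums $M-N$ in place of $N$).

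The construction would proceed block by block. For each block $A_k$, I would decompose it canonically into binary atoms encoding its column structure, analogous to the decomposition $A_i=\sum_{j=1}^N C_j$ used in Theorem \ref{th:combSp}, and then build $B_k$ from a complementary collection of atoms in the $(M-N)\times L_k$ rectangle. Whereas spatial duality extends each atom $C_j$ \emph{horizontally} into a permutation matrix inside the $N\times N$ square, Naimark duality should extend the whole block $A_k$ \emph{vertically}: the combined $M\times L_k$ matrix obtained by stacking $B_k$ beneath $A_k$ should have all column sums equal to $M$ and admit a canonical atomic decomposition jointly consistent with $A_k$ and $B_k$. Globally, this amounts to producing an $M\times M$ nonnegative integer matrix $\bar A$ with row and column sums equal to $M$, whose top $N$ rows recover $A$ and whose bottom $M-N$ rows constitute the sought $B$.

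After constructing $B$, I would verify that it satisfies properties (i)--(v) for the RHS. Properties (i)--(iii) follow by construction, and property (v) for each $B_k$ from the corresponding property of $A_k$ transferred through the complement. The crux is verifying property (iv) (row-sum dominance) for $B$, which I expect to establish by contradiction in the spirit of the analogous step in the proof of Theorem \ref{th:combSp}: if (iv) fails at some position $(i,l)$ in $B$, then using global identities linking the entries of $A$ and $B$ (the Naimark analogues of equations \eqref{eq:spa2} and \eqref{eq:spa3}) one derives a failure of (iv) for $A$ at a corresponding position. Showing that the construction is an involution then completes the bijection and yields \eqref{eq:Naicomb}.

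The main obstacle will be designing the complementation so that the transferred dominance (iv) on $B$ actually holds globally across column blocks. Because Naimark duality alters the number of rows from $N$ to $M-N$ while preserving both $M$ and the $L_k$, the complement cannot be performed locally within each block as in Theorem \ref{th:combSp} (where each block keeps its $N$ rows and only the width changes via $L_i\mapsto N-L_i$); the correct atomic structure must interleave the row structure across all blocks simultaneously, and identifying the right canonical atomic decomposition is the principal technical challenge.
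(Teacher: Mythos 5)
Your high-level strategy (a bijection between the two sets of configuration matrices of Corollary \ref{th:combcharcor}) is correct and is what the paper does, but your proposal never actually produces the bijection: you explicitly defer ``identifying the right canonical atomic decomposition,'' and that deferred step is precisely the content of the proof. The obstruction you sense is real, and it is why a direct imitation of Theorem \ref{th:combSp} fails: the spatial complement works because each atom $C_j$ of $A_i$ is completed by $C'_j$ to an $N\times N$ permutation matrix, so complementation takes place on the row set $\{1,\ldots,N\}$ common to $A_i$ and $B_i$. In Naimark duality the two configuration matrices have row sets of different sizes ($N$ versus $M-N$), so there is no shared index set of rows on which to complement atoms; your global $M\times M$ completion $\bar A$ with row and column sums $M$ is highly non-unique, and nothing in the proposal selects the completion that satisfies the dominance conditions. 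As written, the argument has a genuine gap at its central step.

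The paper's missing idea is to change encoding \emph{before} complementing: from the configuration matrix $A$ pass to the Littlewood--Richardson skew tableaux $\mu^k/\mu^{k-1}$ (with $\mu^k$ as in \eqref{eq:mupartition}) and record, for each block $k$, the $L_k\times M$ binary column-incidence matrix $T_k$, where $T_k[x,y]=1$ exactly when the entry $x$ occupies column $y$ of $\mu^k/\mu^{k-1}$. Each row of $T_k$ has exactly $N$ ones among $M$ positions (the content is $(N^{L_k})$ and a semistandard column contains a given entry at most once), and the column axis of length $M$ is shared by both sides of \eqref{eq:Naicomb}. Complementation is then block-local in this picture: $S_k[x,y]:=1-T_k[x,M-y+1]$ has rows with $M-N$ ones, and upward-justifying the stacked matrices $S(k)$ produces the dual partitions $(\mu^k)^*$, hence a configuration matrix $B$ for $(L_1,\ldots,L_K;M-N)$; the operation is manifestly an involution. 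Note also that your prediction of where the difficulty lies is inverted relative to this construction: for the tableaux complement, properties (i) and (iv) of $B$ are immediate, and the delicate verification is the column-sum dominance (v), which the paper establishes by contradiction, translating a failure of (v) for $B_k$ through $S_k$ into a failure of (v) for $T_k$, i.e.\ for $A$.
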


As with Theorem \ref{th:combSp}, we define a bijection between configuration matrices corresponding
to the Littlewood-Richardson coefficients in \eqref{eq:Naicomb}.  Fix a configuration matrix $A$
corresponding to the sequence $(L_1,\ldots,L_K;N)$ and consider the Littlewood-Richardson skew
tableaux $\mu^k/ \mu^{k-1}$ where $\mu^k$ is defined in equation \eqref{eq:mupartition}.  To each
$\mu^k/\mu^{k-1}$ we define the $L_k\times M$ binary matrix $T_k$ by

$$T_k[x,y]:=\begin{cases}1 & \text{if $x$ appears in column $y$ of $\mu^k/\mu^{k-1}$}\\
0 &\text{otherwise}.\end{cases}$$

The partition shape of $\mu^k$ can be recovered from the matrices $T_1,\ldots T_K$ as follows.
Define the matrix $T(k)$ by ``stacking" the matrices $T_1,.\ldots, T_k$ (see Example \ref{ex:Nai}
below).  In other words,
$$T(k):= \left( \begin{array}{c} T_1\\ \hline \vdots\\ \hline T_k\end{array} \right).$$  Since $A$
satisfies property $(iv)$, the partition $\mu^k$ can be recovered by upward justifying the nonzero
entries of $T(k).$  In particular, the entire collection $T_1,\ldots, T_K$ uniquely determines the
matrix $A.$

\smallskip

We now define the ``complementary" $M\times L_k$ matrix $S_k$ by $$S_k[x,y]:=1-T_k[x,M-y+1]$$ and
$S(k)$ as the corresponding column matrix with block entries $S_1,\ldots, S_k.$  It is easy that if
the nonzero entries of $S(k)$ are justified upwards, we get the dual partition $(\mu^k)^*$ in
rectangle $(M^{M_k})$ where $M_k:=\sum_{i=1}^k L_k.$   Hence $S_1,\ldots, S_K$ determines some
matrix $B$ in the same way that $T_1,\ldots, T_K$ determines $A$.  Also note that we can recover
$T_k$ from $S_k$ by applying the complementary operation to $S_k.$  Theorem \ref{th:combNai}
follows from the proceeding proposition.

\begin{proposition}The collection $S_1,\ldots, S_K$ determines a configuration matrix for the sequence $(L_1,\ldots, L_K; M-N).$\end{proposition}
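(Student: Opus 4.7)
The plan is to verify that the matrix $B$ determined by $S_1,\ldots,S_K$ satisfies all five properties (i)--(v) of a configuration matrix for the sequence $(L_1,\ldots,L_K;M-N)$, which by Corollary \ref{th:combcharcor} will witness the nonvanishing of the right-hand coefficient in \eqref{eq:Naicomb}. The guiding principle is that the transformation $T_k[x,y] \mapsto S_k[x,y] = 1 - T_k[x,M-y+1]$ combinatorially encodes the classical bijection sending a Littlewood-Richardson skew tableau $\mu^k/\mu^{k-1}$ of content $(N^{L_k})$, inscribed in the rectangle $(M^{M_k})$, to its $180^{\circ}$-rotated complementary skew tableau of shape $(\mu^{k-1})^{*}/(\mu^k)^{*}$ (complements taken in the appropriate rectangles) filled with content $((M-N)^{L_k})$. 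That the rotated complement of an LR skew tableau is again an LR skew tableau is classical (see e.g.\ \cite{Fulton97}) and is equivalent to the symmetry of LR coefficients under simultaneous complementation in a rectangle; modulo bookkeeping for how the enclosing rectangles $(M^{M_k})$ grow with $k$, this yields a dual increasing sequence of partitions together with LR skew fillings whose row-count data is precisely $B$.

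First I would dispose of properties (i)--(iii). Nonnegativity is immediate since each $S_k$ is binary. The row sums of $S_k$ are each $M-N$ because the rows of $T_k$ sum to $N$ (letter $x$ appears exactly $N$ times in the content $(N^{L_k})$); translated through the row-count encoding, this gives $\sum_{j=1}^{M}B[i,j] = M$, so property (ii) holds. The column sums $\sum_{i} B[i,j] = M-N$ (property (iii)) are forced by the fact that the total dual shape $(\mu^K)^{*}$ inside $(M^{M})$ equals $(M^{M-N})$, so each column of the total dual tableau has exactly $M-N$ cells.

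The hard part will be verifying the dominance conditions (iv) and (v), which encode the semistandardness and lattice-word property of the dual LR skew tableaux. I plan to address these by explicitly translating the corresponding dominance inequalities for the sub-blocks of $A$ across the bijection $T_k \leftrightarrow S_k$ via an index manipulation analogous to (but more delicate than) the contradiction argument used in the proof of Theorem \ref{th:combSp}. Alternatively, once the identification of $B$ with the row-count data of the rotated-complement skew tableaux is firmly in place, properties (iv) and (v) follow from the LR property of those dual skew tableaux. Either route, the main obstacle is keeping track of the $180^{\circ}$-rotation and rectangle enlargement consistently so that the inequalities of the source translate cleanly to the inequalities of the target. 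As a by-product, this bijective proof establishes Theorem \ref{th:combNai}: configuration matrices for $(L_1,\ldots,L_K;N)$ and $(L_1,\ldots,L_K;M-N)$ are in explicit bijection, so the two Littlewood-Richardson coefficients in \eqref{eq:Naicomb} are equal.
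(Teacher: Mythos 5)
Your guiding principle is sound --- $S_k[x,y]=1-T_k[x,M-y+1]$ is indeed a per-letter column-set complementation (complement and reverse), which for the rectangular content $(N^{L_k})$ is the classical complementation operation on Littlewood--Richardson fillings --- and your treatment of properties (i)--(iii) is essentially right, though cross-wired: the row sums of $S_k$ being $M-N$ give the \emph{column} sums of the block $B_k$, i.e.\ property (iii), while property (ii) (row sums of $B$ equal to $M$) comes from the column sums of $S(K)$, i.e.\ from the total dual shape being $(M^{M-N})$. The genuine gap is that the dominance properties are never verified: you explicitly defer (iv) and (v) to one of two unexecuted routes, and you yourself flag that ``the main obstacle is keeping track of the $180^{\circ}$-rotation and rectangle enlargement consistently.'' That obstacle is not bookkeeping to be waved off; it is the entire content of the proposition. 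In the paper, property (v) occupies most of the proof: one assumes $B_k$ fails column-sum dominance at some pair $(j,l)$, extracts an index $l'$ with $S_k[j,l'+1]=0$, $S_k[j+1,l'+1]=1$, and $\sum_{i=l'+1}^{M}(S_k[j,i]-S_k[j+1,i])<0$, and then pushes this through the defining relation $S_k[x,y]=1-T_k[x,M-y+1]$ and the fact that the row sums of $T_k$ equal $N$ to conclude that $A$ itself violates property (v) --- a contradiction. Your ``route 1'' gestures at exactly this computation without doing it. Your ``route 2'' (cite the classical fact that complements of LR tableaux are LR) could in principle work, but only after one verifies that upward-justifying $S(k)$ really produces the nested dual partitions $(\mu^k)^*$ in the growing rectangles $(M^{M_k})$ and that the per-letter complementation here (with \emph{no} relabelling of letters) matches the classical operation; that verification is the same unfinished work, so neither route closes the argument.

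A smaller point that signals the same looseness: you treat (iv) and (v) as equally hard, but in the paper's construction (iv) is immediate --- $B$ is by definition the row-count matrix obtained by upward-justifying $S(k)$ to the dual partitions, so the row-dominance structure holds by construction, and only the per-block lattice-word condition (v) requires an argument. As it stands, your proposal is a credible plan for a proof, but not a proof: the one step that distinguishes this proposition from a routine verification is the step left undone.
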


\begin{proof}Let $B=[B_1|\cdots| B_K]$ denote the matrix corresponding to the collection
$S_1$, $\ldots$, $S_K$.  We will show that $B$ is a configuration matrix for the sequence
$(L_1,\ldots, L_K; M-N).$  In this case, property $(v)$ is the most challenging to prove.  Hence
most the argument to dedicated to this part of the proof.

\smallskip

First, note that $B$ trivially satisfies properties $(i)$ and $(iv).$  Next, we observe that $A$
satisfies properties $(ii)$ and $(iii)$ if and only if the matrix $T(K)$ has $M$ columns where each
column sum is equal to $N.$  Since $S(K)$ has the same number of columns as $T(K)$ with column sums
of $M-N$, we get that $B$ also satisfies properties $(ii)$ and $(iii).$

\smallskip

We now prove that $B$ satisfies property $(v)$ by contradiction.  Suppose there exists $B_k$ and
integers $j,l$ such that
$$\sum_{i=1}^l (B_k[i,j]-B_k[i,j+1])< B_k[l+1,j+1].$$
This implies there exists an integer $l'$ such that
\begin{equation}\label{eq:combnai1}\sum_{i=l'+1}^{M}(S_k[j,i]-S_k[j+1,i])<0\end{equation}
with \begin{equation}\label{eq:combnai2}S_k[j,l'+1]=0\quad\text{and}\quad
S_k[j+1,l'+1]=1.\end{equation} Conversely, assume there exists an integer $l'$ such that equations
\eqref{eq:combnai1} and \eqref{eq:combnai2} are true.  By equation \eqref{eq:combnai2}, there
exists an integer $l''$ such that
$$\sum_{i=l'+1}^{M}S_k[j,i]=\sum_{i=1}^{l''} B_k[i,j]\quad\text{and}\quad\sum_{i=l'+1}^{M}S_k[j+1,i]\leq \sum_{i=1}^{l''+1} B_k[i,j+1].$$
Hence by equation \eqref{eq:combnai1},
$$-B_k[l''+1,j+1]+\sum_{i=1}^{l''} (B_k[i,j]-B_k[i,j+1])\leq \sum_{i=l'+1}^{M}(S_k[j,i]-S_k[j+1,i])<0.$$  Observe that if \eqref{eq:combnai1}
is true for $l$, then there is always some integer $l'\leq l$ for which both \eqref{eq:combnai1}
and \eqref{eq:combnai2} are true.   Thus the failure of property $(v)$ is equivalent to equation
\eqref{eq:combnai1}.  By definition of $S_k$ and equation \eqref{eq:combnai1}, we have that
$$\sum_{i=1}^{M-l'}(T_k[j+1,i]-T_k[j,i])<0.$$  Since the row sums of $T_k$ equal $N$,
$$\sum_{i=M-l'+1}^{M}(T_k[j,i]-T_k[j+1,i])<0.$$  Therefore the matrix $A$ also fails to satisfy property $(v)$ which is a contradiction.
This completes the proof.\end{proof}

\begin{example}\label{ex:Nai} Consider $N=4$ and $\mathbf L=(2,2,2,1)$ as in Example \ref{ex:sp}.
Then $\mu^4,$ as a union of Littlewood-Richardson skew tableaux, is equal to

$$\begin{young}
? 1& ?1 & ?1 & ?1 & !! 1& !! 1&!!1\\
? 2& ? 2& ?2 & ?2&  !! 2& ?? 1&??1\\
!! 1& !! 2& !! 2 & ??1 & ??1 & ?? 2&??2\\
!! 2& ??2& ?? 2& ! 1& ! 1& ! 1&!1\\
\end{young}$$

We have that

$$ T(4)= \left( \begin{array}{ccccccc}
1&1&1&1&0&0&0\\
1&1&1&1&0&0&0\\ \hline
1&0&0&0&1&1&1\\
1&1&1&0&1&0&0\\ \hline
0&0&0&1&1&1&1\\
0&1&1&0&0&1&1\\ \hline
0&0&0&1&1&1&1\\
\end{array} \right)\quad \rightsquigarrow\quad
S(4)= \left( \begin{array}{ccccccc}
1&1&1&0&0&0&0\\
1&1&1&0&0&0&0\\ \hline
0&0&0&1&1&1&0\\
1&1&0&1&0&0&0\\ \hline
0&0&0&0&1&1&1\\
0&0&1&1&0&0&1\\ \hline
0&0&0&0&1&1&1\\
\end{array} \right)$$

Upward justifying the nonzero entries of $S(4)$ gives the union of Littlewood-Richardson skew
tableaux

$$\begin{young}
? 1& ?1 & ?1 & !! 1& !! 1&!!1& ??1\\
? 2& ? 2& ?2 & !!2&  ?? 1&??1& ??2\\
!!2& !!2& ?? 2 & ??2 & !1 &!1&!1\\
\end{young}$$

The corresponding configuration matrix is
$$B=\left( \begin{array}{cc|cc|cc|c}
3&0&3&0&1&0&0 \\
0&3&0&1&2&1&0 \\
0&0&0&2&0&2&3 \\
\end{array} \right).$$
and hence $(2,2,2,1)\in \TFF(7/3,3).$\end{example}


\section{Examples and tables of TFF sequences} \label{S7}

This section is divided into two parts.  In the first part we give several examples of existence of
tight fusion frames using skew Littlewood-Richardson tableaux as in Example \ref{ex1}.  In the
second part, we give a complete list of tight fusion frame sequences for $N\leq 9$ and $\alpha\leq
2$ by listing all maximal elements in the partial order induced by majorization.

\subsection{Examples of skew Littlewood-Richardson tableaux}  The following are some examples of Littlewood-Richardson tableaux in the cases of $N=3, 5,
7.$  Readers who are interested in combinatorial spatial and Naimark duality as discussed in Section
\ref{S6} are encouraged to apply the bijective constructions to these examples.

\bigskip
$N=3$ and $\mathbf L=(3,2,1),$ $\mathbf L=(2,1,1,1),$ and $\mathbf L=(1,1,1,1),$

$$\begin{young}
? 1& ? 1& ? 1& !! 1& !! 1& !!1 \\
? 2& ? 2& ? 2& !! 2& !! 2& !!2 \\
? 3& ? 3& ? 3& ?? 1& ?? 1&?? 1\\
\end{young}\hspace{.5in}
\begin{young}
? 1& ?1 & ?1 & !! 1& !! 1 \\
? 2& ?2 & ?2 & ??1 &??1  \\
!! 1& ?? 1& ! 1& !1 & ! 1\\
\end{young}
\hspace{.5in}
\begin{young}
? 1& ? 1& ? 1& !! 1 \\
!!1& !!1 & ?? 1& ?? 1\\
?? 1& ! 1& ! 1& ! 1\\
\end{young}$$

\bigskip

$N=5$ and $\mathbf L=(2,2,2,2)$ and $\mathbf L=(3,3,3,3)$

$$\begin{young}
? 1& ?1 & ?1 & ?1 & ?1 & !! 1& !! 1&!!1\\
? 2& ? 2& ?2 & ?2 & ? 2& !! 2& ?? 1&??1\\
!! 1& !! 1& !! 2& !!2 & ??1 & ??1 & ?? 2&??2\\
!! 2& !! 2& ??1& ! 1& ! 1& ! 1& ! 1&!1\\
?? 2& ??2 & ??2 & ! 2& !2 & !2 & !2 &!2\\
\end{young}\hspace{.5in}
\begin{young}
?1&?1&?1&?1&?1&!!1&!!1&!!1&!!1&!!1&??1&??1\\
?2&?2&?2&?2&?2&!!2&!!2&!!2&??1&??2&??2&??2\\
?3&?3&?3&?3&?3&!!3&??1&!1&!1&!1&!1&!1\\
!!2&!!2&!!3&!!3&??1&??2&??2&!2&!2&!2&!2&!2\\
!!3&!!3&??3&??3&??3&??3&??3&!3&!3&!3&!3&!3\\
\end{young}$$

\bigskip

$N=7$ and $\mathbf L=(4,3,3,1,1)$ and $\mathbf L=(3,2,2,2,1)$

$$\begin{young}
?1&?1&?1&?1&?1&?1&?1&!!1&!!1&!!1&!!1&!!1\\
?2&?2&?2&?2&?2&?2&?2&!!2&!!2&!!2&!!2&!!2\\
?3&?3&?3&?3&?3&?3&?3&!!3&!!3&??1&??1&??1\\
?4&?4&?4&?4&?4&?4&?4&??1&??1&??2&??2&??2\\
!!1&!!1&!!3&!!3&!!3&??1&??1&??2&??2&??3&??3&??3\\
!!2&!!2&??2&??2&??3&??3&!1&!1&!1&!1&!1&!1\\
!!3&!!3&??3&??3&!1&???1&???1&???1&???1&???1&???1&???1\\
\end{young}\hspace{.5in}
\begin{young}
?1&?1&?1&?1&?1&?1&?1&!!1&!!1&!!1\\
?2&?2&?2&?2&?2&?2&?2&!!2&!!2&!!2\\
?3&?3&?3&?3&?3&?3&?3&??1&??1&??1\\
!!1&!!1&!!1&!!1&??1&??1&??1&??2&??2&??2\\
!!2&!!2&!!2&!!2&??2&!1&!1&!1&!1&!1\\
??1&??2&??2&!1&!1&!2&!2&!2&!2&!2\\
??2&!2&!2&???1&???1&???1&???1&???1&???1&???1\\
\end{young}$$

\subsection{Tables of maximal tight fusion frames.}

At the end of this section we give a complete list of tight fusion frames for $N\leq 9$ and $\alpha
\leq 2$ by listing all maximal elements in the partial order induced by majorization.  These lists are
generated by applying the techniques developed in this paper.  In particular, we use the following methods

\begin{itemize}
\item Constructing Littlewood-Richardson tableaux as in Corollary \ref{th:combcharcor}.
\item Recursive construction using spatial and Naimark duality.
\item Recursive construction using Lemma \ref{lemma:recur}.
\item Applying inequalities of Theorem \ref{re}/\ref{th:hooks}.
\end{itemize}

The following lemma follows from Naimark's duality.

\begin{lemma}\label{lemma:recur}
Assume that $L_1=N(\alpha-1).$  Then, $\mathbf L\in\TFF(\alpha, N)$ if and only if  $\mathbf
L'\in\TFF(\tilde \alpha, N(\alpha-1))$ where $\mathbf L'=(L_2\geq\cdots\geq L_k)$ and $1/\alpha+1/\alpha'=1$.
 \end{lemma}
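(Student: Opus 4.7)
The plan is to derive Lemma \ref{lemma:recur} as a short consequence of Naimark duality (Theorem \ref{du2}), using the observation that when the largest rank equals the Naimark--dual dimension, the corresponding dual projection is forced to be the identity.

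First I would apply Theorem \ref{du2} to rewrite the statement $\mathbf L \in \TFF(\alpha,N)$ in the dual setting. Setting $\tilde N = (\sum_i L_i) - N = \alpha N - N = N(\alpha-1)$ and $\tilde\alpha = \alpha/(\alpha-1)$ (the relation $1/\alpha + 1/\tilde\alpha = 1$), Naimark duality gives orthogonal projections $\tilde P_1,\ldots,\tilde P_K$ on $\RR^{\tilde N}$ with $\rank \tilde P_i = L_i$ and $\sum_{i=1}^K \tilde P_i = \tilde\alpha\, \mathbf I$. The crucial observation is that by the hypothesis $L_1 = N(\alpha-1) = \tilde N$, the projection $\tilde P_1$ has full rank and must therefore equal the identity on $\RR^{\tilde N}$.

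Subtracting $\tilde P_1 = \mathbf I$ from the sum gives
\[
\sum_{i=2}^K \tilde P_i = (\tilde\alpha - 1)\,\mathbf I = \frac{1}{\alpha-1}\,\mathbf I
\]
on $\RR^{\tilde N} = \RR^{N(\alpha-1)}$. This exhibits $\mathbf L' = (L_2, \ldots, L_K)$ as a TFF sequence in the $\tilde N$-dimensional space with frame bound $\tilde\alpha - 1$, which is precisely the conclusion of the lemma (with the frame bound expressed via $1/\alpha + 1/\tilde\alpha = 1$). The converse direction runs the same argument backwards: given any family of projections realizing $\mathbf L'$ with the stated frame bound on $\RR^{\tilde N}$, adjoin $\tilde P_1 := \mathbf I_{\tilde N}$ (of rank $L_1 = \tilde N$) to obtain a TFF realizing $\mathbf L$ with bound $\tilde\alpha$ on $\RR^{\tilde N}$, and then apply Naimark duality once more to transport this back to $\RR^N$ with bound $\alpha$.

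There is really no main obstacle: the entire content is packaged in Theorem \ref{du2}, and the only nontrivial input is the rank/dimension match $L_1 = \tilde N$ that forces $\tilde P_1 = \mathbf I$. The one point that warrants a careful line in the write-up is the bookkeeping of the frame bounds (checking that the trace identities are consistent, since $\tilde\alpha \tilde N - \tilde N = (\tilde\alpha - 1)\tilde N = N = \sum_{i \ge 2} L_i$), and the fact that Naimark duality applies in both directions so the equivalence really is an ``if and only if''.
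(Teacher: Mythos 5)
Your proof is correct and follows exactly the paper's route: the paper's entire justification of this lemma is the one-line remark that it ``follows from Naimark's duality'' (Theorem \ref{du2}), and your write-up supplies precisely the intended details — the full-rank dual projection is forced to equal $\mathbf I$, subtract it, and use the involutive nature of the duality (Corollary \ref{du3}) for the converse. Note that your trace bookkeeping in fact corrects a typo in the lemma's statement: the frame bound for $\mathbf L'$ is $\tilde\alpha - 1 = 1/(\alpha-1)$ (which is what matches the recursion used for the tables in Section \ref{S7}), not $\tilde\alpha$ as literally written there.
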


It is easy to see that maximality under the majorization partial order is preserved under these dualities and the lemma above.  Unfortunately, there are several TFF sequences missed by majorization and the recursive generation techniques mentioned above.  These sequences were only found by brute force construction of Littlewood-Richardson tableaux.  The first maximal tight fusion frame sequence missed by recursion is $(4,2,2,2,1)$ where $N=6.$ Hence, it might be of interest to illustrate how to construct a tight fusion frame for this sequence.

\begin{example} \label{ex1a}
Let $N=6$ and $\mathbf L = (4,2,2,2,1)$.
The first step in our construction is identifying a skew Littlewood-Richardson tableaux corresponding to our TFF sequence.

\[
\begin{young}
?1&?1&?1&?1&?1&?1&!!1&!!1&!!1&!!1&!!1\\
?2&?2&?2&?2&?2&?2&!!2&!!2&!!2&??1&??1\\
?3&?3&?3&?3&?3&?3&??1&??1&??1&??2&??2\\
?4&?4&?4&?4&?4&?4&!1&!1&!1&!1&!1\\
!!1&!!2&!!2&??1&??2&!1&!2&!2&!2&!2&!2\\
!!2&??2&??2&??2&!2&???1&???1&???1&???1&???1&???1\\
\end{young}
\]
The above tableaux shows the existence of projections $P_1$, $\ldots$, $P_5$ in $\RR^6$ with
\begin{equation}\label{ex2}
\sum_{i=1}^5 P_i = \frac{11}6 \mathbf I, \quad\rank P_1=4,\  \rank P_2=\rank P_3=\rank P_4=2,\ \rank P_5=1.
\end{equation}
By Theorem \ref{th:hornrec} and Corollary \ref{th:combcharcor}, the tableaux also contains information on the eigenvalues of the intermediate partial sums of projections in \eqref{ex2}.

\begin{center}
\begin{tabular}{|c|c|}\hline
sum of projections & eigenvalue list \\ \hline
$P_1$ & $(1,1,1,1,0,0)$\\[2pt]
 $P_1+P_2$ & $(\frac{11}6,\frac96,1,1,\frac36,\frac16)$\\[2pt]
 $P_1+P_2+P_3$ & $(\frac{11}6,\frac{11}6,\frac{11}6,1,\frac56,\frac46)$\\[2pt]
 $P_1+\ldots+P_4$ & $(\frac{11}6,\frac{11}6,\frac{11}6,\frac{11}6,\frac{11}6,\frac56)$\\[2pt]
 $P_1+\ldots+P_5$ & $(\frac{11}6,\frac{11}6,\frac{11}6,\frac{11}6,\frac{11}6,\frac{11}6)$\\[2pt]
\hline
\end{tabular}
\end{center}

Equipped with this information and a symbolic computation program such as Mathematica we can construct an explicit tight fusion frame in $\RR^6$ associated with the sequence $(4,2,2,2,1)$. The matrix below shows an orthonormal basis (column) vectors for the corresponding ranges of projections $P_i$, $i=1,\ldots,5$.

\[
\left(
\begin{array}{cccc|cc|cc|cc|c}
 1 & 0 & 0 & 0 & \frac{5}{6} & 0 & -\sqrt{\frac{5}{72}} & 0 & \sqrt{\frac{5}{72}} & 0 & 0
\\[2pt]
 0 & 1 & 0 & 0 & 0 & \frac{1}{2} & -\frac{1}{2 \sqrt{2}} & -\frac{1}{3} & -\frac{1}{2 \sqrt{2}} & \frac{1}{3} & \frac{1}{3} \\[2pt]
 0 & 0 & 1 & 0 & 0 & 0 & 0 & \frac{\sqrt{5}}{3} & 0 & \frac{\sqrt{5}}{6} & \frac{\sqrt{5}}{6}
 \\[2pt]
 0 & 0 & 0 & 1 & 0 & 0 & 0 & 0 & 0 & \sqrt{\frac{5}{12}} & -\sqrt{\frac{5}{12}}
 \\[2pt]
 0 & 0 & 0 & 0 & 0 & -\frac{\sqrt{3}}{2} & \frac{1}{2 \sqrt{6}} & -\frac{1}{\sqrt{3}} & \frac{1}{2 \sqrt{6}} & \frac{1}{\sqrt{3}} & \frac{1}{\sqrt{3}}
 \\[2pt]
 0 & 0 & 0 & 0 & -\frac{\sqrt{11}}{6} & 0 & -\sqrt{\frac{55}{72}} & 0 & \sqrt{\frac{55}{72}} & 0 & 0
\end{array}
\right)
\]

A direct calculation shows that: (i) columns are orthonormal to each other in every block, and (ii) rows are orthogonal with norms $\sqrt{11/6}$. This proves the existence of a TFF \eqref{ex2}.
\end{example}

It is worth noting that the Example \ref{ex1a} can not be obtained using the spectral tetris construction (STC). The STC has been recently introduced by Casazza et al. \cite{CFHWZ} who gave an algorithmic way of constructing sparse fusion frames. Among other things, the authors of \cite{CFHWZ} have shown that the ranks $\mathbf L$ of spectral tetris fusion frames must necessarily satisfy $\mathbf L \preccurlyeq \mathbf L'$, where $\mathbf L'$ is a sequence of ranks of the reference fusion frame. Moreover, in the tight case this condition is also sufficient, and hence \cite[Theorem 3.3]{CFHWZ} characterizes possible ranks obtained by the STC in the case when the frame bound $\alpha\ge 2$. Combining this with Naimark's complements, see Theorem \ref{du2}, this yields TFFs also in the case $1<\alpha<2$. In particular, we have $\TFF(11/6,6)=\TFF(11/5,5)$. A direct calculation of the reference fusion frame corresponding to eigenvalues $(11/5,11/5,11/5,11/5,11/5)$ yields a TFF sequence $\mathbf (3,3,3,2)$. This happens to be another maximal element of $\TFF(11/6,6)$ which is not comparable with $(4,2,2,2,1)$ with respect to the majorization relation $\preccurlyeq$. Hence, the above example can not be obtained by the STC even when paired with Naimark's duality.

\bigskip

\begin{center} \sc List of maximal TFF sequences  for $N\leq 9$ and $\alpha
\leq 2$.
\end{center}

{\begin{center}
\begin{tabular}{|c|c|}
\hline
\multicolumn{2}{|c|}{$N=3$} \\
\hline
$\alpha$ & max elements\\ \hline
$1$ & $(3)$\\
 $4/3$ & $(1,1,1,1)$\\
 $5/3$ & $(2,1,1,1)$\\
 $2$ & $(3,3)$\\
\hline
\end{tabular}\hspace{.5in}
\begin{tabular}{|c|c|}
\hline
\multicolumn{2}{|c|}{$N=4$} \\
\hline
$\alpha$ & max elements\\ \hline
$1$ & $(4)$\\
 $5/4$ & $(1,1,1,1,1)$\\
 $6/4$ & $(2,2,2)$\\
 $7/4$ & $(3,1,1,1,1), (2,2,2,1)$\\
 $2$ & $(4,4)$\\
\hline
\end{tabular}\end{center}}

\bigskip

{\begin{center}
\begin{tabular}{|c|c|}
\hline
\multicolumn{2}{|c|}{$N=5$} \\
\hline
$\alpha$ & max elements\\ \hline
$1$ & $(5)$\\
 $6/5$ & $(1,1,1,1,1,1)$\\
 $7/5$ & $(2,2,1,1,1)$\\
 $8/5$ & $(3,2,1,1,1), (2,2,2,2)$\\
 $9/5$ & $(4,1,1,1,1,1), (3,2,2,2)$\\
 $2$ & $(5,5)$\\
 \hline
\end{tabular}\hspace{.1in}
\begin{tabular}{|c|c|}
\hline
\multicolumn{2}{|c|}{$N=6$} \\
\hline
$\alpha$ & max elements\\ \hline
$1$ & $(6)$\\
 $7/6$ & $(1,1,1,1,1,1,1)$\\
 $8/6$ & $(2,2,2,2)$\\
 $9/6$ & $(3,3,3)$\\
 $10/6$ & $(4,2,2,2)$\\
 $11/6$ & $(5,1,1,1,1,1,1),(4,2,2,2,1), (3,3,3,2)$\\
 $2$ & $(6,6)$\\
\hline
\end{tabular}\end{center}}

\bigskip

{\begin{center}
\begin{tabular}{|c|c|}
\hline
\multicolumn{2}{|c|}{$N=7$} \\
\hline
$\alpha$ & max elements\\ \hline
$1$ & $(7)$\\
 $8/7$ & $(1,1,1,1,1,1,1,1)$\\
 $9/7$ & $(2,2,2,1,1,1)$\\
 $10/7$ & $(3,3,1,1,1,1),(3,2,2,2,1)$\\
 $11/7$ & $(4,3,1,1,1,1), (4,2,2,2,1)$\\
 $12/7$ & $(5,2,2,1,1,1),(4,3,3,1,1),(3,3,3,3)$\\
 $13/7$ & $(6,1,1,1,1,1,1,1),(5,2,2,2,2),(4,3,3,3)$\\
 $2$ & $(7,7)$\\
 \hline
\end{tabular}\end{center}}

\bigskip

{\begin{center}
\begin{tabular}{|c|c|}
\hline
\multicolumn{2}{|c|}{$N=8$} \\
\hline
$\alpha$ & max elements\\ \hline
$1$ & $(8)$\\
 $9/8$ & $(1,1,1,1,1,1,1,1,1)$\\
 $10/8$ & $(2,2,2,2,2)$\\
 $11/8$ & $(3,2,2,2,2),(3,3,2,1,1,1)$\\
 $12/8$ & $(4,4,4)$\\
 $13/8$ & $(5,3,2,1,1,1),(5,2,2,2,2),(4,4,2,2,1)$\\
 $14/8$ & $(6,2,2,2,2),(5,3,3,2,1),(4,4,4,2)$\\
 $15/8$ & $(7,1,1,1,1,1,1,1),(6,2,2,2,2,1),(5,3,3,2,2), (4,4,4,3)$\\
 $2$ & $(8,8)$\\
 \hline
\end{tabular}\end{center}}

\bigskip

{\begin{center}
\begin{tabular}{|c|c|}
\hline
\multicolumn{2}{|c|}{$N=9$} \\
\hline
$\alpha$ & max elements\\ \hline
$1$ & $(9)$\\
 $10/9$ & $(1,1,1,1,1,1,1,1,1,1)$\\
 $11/9$ & $(2,2,2,2,1,1,1)$\\
 $12/9$ & $(3,3,3,3)$\\
 $13/9$ & $(4,4,1,1,1,1,1),(4,3,2,2,2),(3,3,3,3,1)$\\
 $14/9$ & $(5,4,1,1,1,1,1),(5,3,2,2,2),(4,3,3,3,1)$\\
 $15/9$ & $(6,3,3,3)$\\
 $16/9$ & $(7,2,2,2,1,1,1),(6,3,3,3,1),(5,4,4,2,1), (4,4,4,4)$\\
 $17/9$ & $(8,1,1,1,1,1,1,1,1,1), (7,2,2,2,2,2), (6,3,3,3,2), (5,4,4,4)$\\
 $2$ & $(9,9)$\\
 \hline
\end{tabular}\end{center}}

\bibliographystyle{abbrv}
\bibliography{refTTFFHC}

\end{document}